\numberwithin{equation}{section}
\theoremstyle{plain}
        \newtheorem{theorem}{Theorem}[section]
        \newtheorem{assumption}[theorem]{Assumption}
        \newtheorem{proposition}[theorem]{Proposition}
        \newtheorem{lemma}[theorem]{Lemma}
        \newtheorem{corollary}[theorem]{Corollary} 
        \newtheorem{definition}[theorem]{Definition} 
        \newtheorem{remark}[theorem]{Remark}
\newtheorem*{theorem*}{Theorem}
\newtheorem*{definition*}{Definition}
\newtheorem*{proposition*}{Proposition}
\newcommand{\loc}{\text{loc}}
\newcommand{\sgn}{\operatorname{sgn}}
\newcommand{\R}{\mathbb{R}}
\newcommand{\C}{\mathbb{C}}
\newcommand{\T}{\mathbb{T}}
\newcommand{\N}{\mathbb{N}}
\newcommand{\hf}{\mathbb{f}}
\newcommand{\hg}{\mathbb{g}}
\newcommand{\hQ}{\mathbb{Q}}
\newcommand{\cF}{\mathcal{F}}
\newcommand{\cD}{\mathcal{D}}
\newcommand{\cH}{\mathcal{H}}
\newcommand{\Do}{\R^{2d}}
\newcommand{\sd}{S^{d-1}}
\newcommand{\G}{\R^{4d}\times S^{d-1}}
\newcommand\supp{\operatorname{Supp}}
\renewcommand{\d}{\partial} 
\newcommand{\defeq }{\mathop{=}\limits^{\textrm{def}}}
\newcommand{\dd}{{\,\rm d}}
\title[Localised limite for fuzzy Boltzmann equations]{Passing to the limit in fuzzy Boltzmann equations}
\author[M.~Erbar]{Matthias Erbar}
\author[Z.~He]{Zihui He}
\address[M.~Erbar and Z.~He]
{Fakult\"at f\"ur Mathematik, Universit\"at Bielefeld, Postfach 100131, 33501 Bielefeld, Germany}
\email{erbar@math.uni-bielefeld.de, zihui.he@uni-bielefeld.de*}
\keywords{Boltzmann equation, delocalised collision, hard potential}
\subjclass[2020]{35Q20, 82C40}
\date{\today}
\begin{document}

\begin{abstract}
We study a fuzzy Boltzmann equation, where collisions are delocalised and modulated by a spatial kernel. We show that as the spatial kernel converges to a delta distribution, the solutions to these equations converge to renormalised solutions of the inhomogeneous Boltzmann equations.
\end{abstract}

\maketitle


\section{Introduction}
The classical inhomogeneous Boltzmann equation can be written as
\begin{equation}
\label{IBE}
\begin{aligned}
\d_t f+v\cdot \nabla_x f=Q(f,f),\quad (t,x,v)\in [0,T]\times \Do,
\end{aligned}
\end{equation}
where the collision operator $Q(f,f)$ is given by
\begin{equation*}
\label{cla:Q}
Q(f,f)=\int_{\R^d\times \sd}
\big(f(x,v')f(x,v'_*)-f(x,v)f(x,v_*)\big)B(v-v_*,\omega)\dd v_*\dd \omega,
\end{equation*}
with pre- and post-collisional velocities $(v,v_*)$ and $(v',v_*')$ related via
\begin{equation*}
  v' =v - \langle v-v_*,\omega\rangle w\quad \text{and}\quad v_*' = v_* +
  \langle v-v_*\omega\rangle \omega\quad\text{for any } \omega\in S^{d-1},
\end{equation*}
and suitable collision kernel $B:\R^d\times S^{d-1}\to\R_+$.
In this article, we will consider kernels that are locally integrable and are bounded as $0\leq B(v,\omega)\lesssim 1+ |v|^\mu$ for some $\mu\in [0,1]$, usually called soft potentials with angular cut-off in the literature.

Solutions to the Boltzmann equation describe the evolution of the density of a dilute gas of particles interacting by elastic collision. They formally conserve mass, momentum and energy, i.e. $\int(1,v,|v|^2) f_t\dd x\dd v = \int (1,v,|v|^2)f_0\dd x\dd v$ for all $t\geq 0$. Thus the equation is naturally posed in $L^1(\R^{2d})$. However, the mathematical analysis of the Boltzmann equation is highly challenging. One reason for this is the quadratic dependence on $f$ in the collision term $Q(f,f)$ which in particular already prevents a straight forward weak formulation of the equation when assuming only $f\in L^1(\R^{2d})$.
\textcite{DL89b} introduced the concept of renormalised solutions to relax the integrability of the collision term and showed global in time existence for this notion. Roughly speaking, this consists in searching for $f$ solving 
\begin{equation}
\label{HBE}
(\d_t+v\cdot\nabla_x)\log(1+f)=\frac{Q(f,f)}{1+f}   
\end{equation}
in weak form. The renormalised collision operator  ${Q(f,f)}/{1+f}$ can be shown to be sublinear in $f$, and therefore is (locally) integrable. Renormalised solutions were also studied for various kinetic equations, see  \cite{DL88,Lio94,Vil96}. We mention that the hydrodynamic limit of $L^1$-renormalised solutions was established in \cite{GSR04}. The hard potential collision kernel with angular cut-off collision kernels were considered in \cite{DL89b}. Furthermore, \eqref{IBE} was studied in the case of non-cut-off soft potentials in \cite{ADVW00,AV02}, the discussion in smoother settings can be found in \cite{AMUXY11,AMUXY11b,AMUXY12}.
\medskip

The Boltzmann equation \eqref{IBE} describes an emblematic example of a system out of equilibrium. A general framework for such systems has been put forward under the term GENERIC (General Equations for Non-Equilibrium Reversible Irreversible Coupling, see \cite{Oett05}). It describes dynamics that combine both conservative and dissipative effects and takes the abstract form
\begin{equation}
\label{generic}
    \d_t f=\mathsf L \dd\mathsf E+\mathsf M \dd\mathsf S,
\end{equation}
where $\mathsf S$ is a dissipated entropy and $\mathsf M$ is a symmetric positive semi-definite operator, while $\mathsf E$ is a conserved energy and $\mathsf L$ is a Poisson operator. For the Boltzmann equation the Hamiltonian part is the transport term $\mathsf L\dd \mathsf E =-v\nabla_x f$ with energy $\mathsf E = \frac12\int |v|^2 f$ and the dissipative part is the collision term $\mathsf M \dd\mathsf S=Q(f,f)$ with the Boltzmann entropy $\mathsf S = -\int f\log f$ and suitable operators $\mathsf M, \mathsf L$. When the Hamiltonian part vanishes, the resulting equation $\d_t \mathsf f=\mathsf M \dd\mathsf S$ is known as a gradient flow. For the spatially homogeneous Boltzmann equation $\partial_t f=Q(f,f)$ this formal gradient flow interpretation has been made rigorous in \cite{Erb23} in the framework of gradient flows in spaces of probability measures, for which a rich theory is available by now, see e.g. \cite{AGS08}. This amounts to a variational characterisation of solutions as the minimisers $\mathcal L(f)=0$ of a functional $\mathcal L\geq 0$ that is tightly linked to the dynamic large deviations of underlying particle dynamics. GENERIC systems in principle allow for a similar variational characterisation and a rigorous implementation for the inhomogeneous Boltzmann equation could be expected to provide a new handle on its analysis.
In order to make progress in this direction, we proposed in \cite{EH25} an approximate equation, termed fuzzy Boltzmann equation, to reduce technical difficulties. Here particle collisions are delocalised, but many features of the classical Boltzmann equation \eqref{IBE} are preserve, and we were able to establish a variational characterisation built on the GENERIC structure.
The purpose of the current paper is to show that the fuzzy Boltzmann equation indeed converges to the inhomogeneous Boltzmann equation in the limit of localised collisions.

\subsection{Fuzzy Boltzmann equation}
The fuzzy Boltzmann equation that was proposed in \cite{EH25} reads
\begin{equation}
\label{pre:FBE}
\d_t f+v\cdot \nabla_x f=Q^\sigma_{\sf fuz}(f,f),\quad (t,x,v)\in [0,T]\times \Do,
\end{equation}
where the fuzzy collision operator is given by 
\begin{align*}
Q^\sigma_{\sf fuz}(f,f)=\int_{\Do\times S^{d-1}}
&\big(f(x,v')f(x_*,v'_*)-f(x,v)f(x_*,v_*)\big)\\
&B(v-v_*,\omega)\kappa^\sigma(x-x_*)\dd x_*\dd v_*\dd \omega.
\end{align*}
Here, one can think of particles interacting via delocalised collisions, i.e.\ one particle at positions $x$ interacts with a particle at position $x_*$.
The rate at which such collisions happen is modulated by a kernel $\kappa^\sigma(x)=\sigma^{-\frac{d}{2}}\kappa({x}/{\sqrt{\sigma}})$, $\sigma\in(0,1)$, where 
\begin{equation*}
\kappa(x)=\|\exp(-\langle x\rangle)\|_{L^1(\R^d)}^{-1}\exp(-\langle x \rangle). 
\end{equation*}
When $\sigma\to0$, the spatial kernel $\kappa^\sigma$ converges to a Dirac measure, and at least formally, the fuzzy Boltzmann equation \eqref{pre:FBE} converges to a classical Boltzmann equation \eqref{IBE}. The aim of this article is to rigorously establish this convergence.

For a fixed $\sigma>0$, compared to the classical Boltzmann equation \eqref{IBE}, the delocalised collisions in the fuzzy Boltzmann equation indeed relax the difficulty stemming from the quadratic dependence of the collision operator on $f$. Indeed, it can be bounded easily by suitable moments of $f$. Assuming e.g.~$B$ to be bounded for simplicity, one has the following a prior estimate:
\begin{equation}
\label{pri:fuz}
    \|Q_{\sf fuz}(f,f)\|_{L^1(\Do)}\lesssim \|B\|_{L^\infty(\R^d)}\|\kappa^\sigma\|_{L^\infty(\R^d)}\|f\|_{L^1(\Do)}^2.
\end{equation}

As a consequence, the fuzzy Boltzmann equation \eqref{pre:FBE} is structurally similar to the 
homogeneous Boltzmann equation
\begin{equation}
\label{eq:homo}
    \d_t f=Q_{\sf homo}(f,f),\quad (t,v)\in[0,T]\times \R^d,
\end{equation}
where $f$ depends only on time and velocity, and the transport term $v\cdot\nabla_x f$ vanishes. The well-posedness problem for the homogeneous Boltzmann equation \eqref{eq:homo} has been studied in \cite{Ark72,Ark72b,MW99,Lu99,Wen99,LW02}, among others. Based on this, we studied the solvability of the fuzzy Boltzmann equation \eqref{pre:FBE} with the collision kernel satisfying the growth condition $0\leq B(v,\omega)\lesssim 1+ |v|^\mu$.




\subsection{Main results}\label{sec:main-thm}
In \cite{EH25}, we showed the existence of weak solutions to the fuzzy Boltzmann equation \eqref{pre:FBE} with a fixed $\sigma\in(0,1)$ that satisfy the following mass, momentum and energy conservation laws
\begin{equation*}
    \int_{\Do}(1,v,|v|^2)f_t\dd x\dd v= \int_{\Do}(1,v,|v|^2)f_0\dd x\dd v\quad\text{for any } t\in[0,T].
\end{equation*}

For fixed $p,\,q\ge 1$ we define the 
the functional space $L^1_{p,q}(\Do)$ consisting of all functions $f$ for which 
\begin{align*}
\|f\|_{L^1_{p,q}}\defeq \|(\langle x\rangle ^p+\langle v\rangle ^q)f\|_{L^1(\Do)}<+\infty.
\end{align*}

The Boltzmann entropy of $f$ is defined by 
\begin{equation*}
    \cH(f)=\int_{\Do}f\log f\dd x\dd v
\end{equation*}
if $\max(f\log f,0)$ is integrable, otherwise, we set $\cH(f)=+\infty$. Notice that if $f\in L^1_{2,2}(\Do)$, then we have the integrability of $\min(f\log f,0)$, see Proposition \ref{prop:moment:ent} below for details.
The entropy is non-increasing in time and the following entropy identity holds:
\begin{equation*}
\cH(f_t)-\cH(f_0)=\int_0^t  \cD(f_s)\dd s \quad\forall t\in[0,T],  
\end{equation*}
where the entropy dissipation is defined as
\begin{equation*}
  \cD(f)=\frac14\int_{\G} \kappa^\sigma B(f'f_*'-ff_*)\log\frac{f'f_*'}{ff_*}\dd\sigma
\end{equation*}
and $\dd\sigma$ denotes the Hausdroff measure on $\G$, where we use the abbreviations $f_*=f(x_*,v_*)$ and $f_*'=f(x_*,v'_*)$.

Our following main result shows that the fuzzy Boltzmann equation \eqref{pre:FBE} converges to a classical inhomogeneous Boltzmann equation \eqref{IBE} as the spatial kernel $\kappa^\sigma$ converges to a Dirac measure. 
\begin{theorem}
\label{thm:main}
Let $f_0\in L^1_{2,2}(\Do)$ such that  $\cH(f_0)<+\infty$. Let the collision kernel $B$ satisfy the Assumption \ref{CK}.  
Let $f^\sigma\in C([0,T];L^1(\Do)$ be a weak solution of the fuzzy Boltzmann equation \eqref{pre:FBE} such that
    \begin{align*}
        \sup_{t\in[0,T]} \int_{\Do}(1+|x|^2+|v|^2+|\log f^\sigma|)f^\sigma\dd x\dd v&\le C,\\
        \int_0^T\int_{\G}\kappa^\sigma B \big((f^\sigma)'(f^\sigma)'_*-f^\sigma f^\sigma_*\big)\log\frac{(f^\sigma)'(f^\sigma)'_*}{f^\sigma f^\sigma_*}\dd\sigma\dd t&\le C
    \end{align*}
  for some $C=C(T)>0$.

Then there exists a renormalised solution $f\in C([0,T];L^1(\Do))\cap L^\infty([0,T]; L^1_{2,2}(\Do))$ of the 
inhomogeneous Boltzmann equation \eqref{IBE} with initial value $f_0$, for which (up to a subsequence) 
\begin{equation*}
    f^{\sigma}\to f\quad\text{in} \quad C([0,T];L^1(\Do))
\end{equation*}
as $\sigma\to0$.

Moreover, the following entropy inequality holds:
\begin{equation*}
    \cH(f_t)-\cH(f_0)+\int_0^t\cD(f_s)\dd s\le0,\quad 0\le s\le t\le T.
\end{equation*}
\end{theorem}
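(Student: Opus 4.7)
The plan is to adapt the DiPerna--Lions renormalisation scheme for the inhomogeneous Boltzmann equation to the present setting, where in addition to the usual quadratic nonlinearity one must carry out the localisation limit $\kappa^\sigma\to\delta_0$. First, I would extract weak compactness: the hypothesis $\sup_{t}\int_{\Do}(1+|x|^2+|v|^2+|\log f^\sigma|)f^\sigma\dd x\dd v\le C$ supplies tightness in $(x,v)$ and equi-integrability (via de la Vallée--Poussin with the superlinear function $s\log s$), so Dunford--Pettis yields $f^\sigma\rightharpoonup f$ weakly in $L^1([0,T]\times\Do)$ along a subsequence, and $f$ inherits the moment and entropy bounds by lower semicontinuity. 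Using the elementary inequality $(a-b)\log(a/b)\ge 4(\sqrt{a}-\sqrt{b})^2$, the entropy dissipation hypothesis also controls the normalised gain and loss $Q^{\sigma,\pm}_{\sf fuz}(f^\sigma,f^\sigma)/(1+f^\sigma)$ uniformly in $L^1$.

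Second, I would upgrade to strong compactness. Written in renormalised form,
\[
\bigl(\d_t+v\cdot\nabla_x\bigr)\log(1+f^\sigma)=\frac{Q^\sigma_{\sf fuz}(f^\sigma,f^\sigma)}{1+f^\sigma},
\]
the right-hand side is bounded in $L^1_{t,x,v}$ uniformly in $\sigma$, and the transport operator on the left is amenable to a velocity averaging lemma, yielding strong $L^1_{\loc}$-compactness of the averages $\int\varphi(v)\log(1+f^\sigma)\dd v$ for $\varphi\in C_c(\R^d)$. Combined with the equi-integrability and tightness in $v$, the standard DiPerna--Lions product argument promotes the weak convergence of $f^\sigma$ to the strong convergence in $C([0,T];L^1(\Do))$ asserted in the theorem.

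Third, and this is the principal obstacle, I would pass to the limit in the renormalised collision operator. The loss term factorises as $f^\sigma(x,v)(\kappa^\sigma*_x f^\sigma)(x,v_*)B$, so strong $L^1$ convergence of $f^\sigma$ together with the convolution approximation $\kappa^\sigma*g\to g$ in $L^1$ identifies its limit as the classical loss $f(x,v)f(x,v_*)B$. The gain term $\int f^\sigma(x,v')f^\sigma(x_*,v'_*)B\,\kappa^\sigma(x-x_*)\dd x_*\dd v_*\dd\omega$ is more delicate: following DiPerna--Lions, I would truncate $B$ at a height $N$, handle the truncated integral by strong convergence together with concentration of $\kappa^\sigma$, and control the excess uniformly in $\sigma$ by the entropy dissipation bound (again via the $(a-b)\log(a/b)$ estimate). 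Sending first $\sigma\to 0$ and then $N\to\infty$ identifies $f$ as a renormalised solution of \eqref{IBE}.

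Finally, the entropy inequality follows by passing to the limit in the exact identity $\cH(f^\sigma_t)+\int_0^t\cD(f^\sigma_s)\dd s=\cH(f_0)$ for the fuzzy equation. Strong $L^1$ convergence together with the moment and entropy controls gives $\cH(f_t)\le\liminf_\sigma\cH(f^\sigma_t)$, while convexity and lower semicontinuity of the dissipation functional, combined with the concentration of $\kappa^\sigma$, yield $\int_0^t\cD(f_s)\dd s\le\liminf_\sigma\int_0^t\cD(f^\sigma_s)\dd s$; adding these two estimates delivers the stated inequality.
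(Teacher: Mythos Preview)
Your outline captures the broad DiPerna--Lions architecture correctly, but there is a genuine gap at the heart of your argument: the passage from compactness of velocity averages to strong convergence of $f^\sigma$ itself in $C([0,T];L^1(\Do))$. You write that ``the standard DiPerna--Lions product argument promotes the weak convergence of $f^\sigma$ to the strong convergence'', but velocity averaging only gives strong compactness of $\int\varphi(v)\log(1+f^\sigma)\dd v$, not of $f^\sigma$ in $L^1(\Do)$. The original DiPerna--Lions 1989 paper does not yield strong $L^1$ convergence of the sequence; that required the subsequent work of Lions (1994), and this is exactly the machinery the paper invokes. Specifically, the paper uses (i) the \emph{smoothing effect} of the gain operator, $\|Q^+(g,f)\|_{H^{(d-1)/2}_v}\lesssim\|f\|_{L^1_v}\|g\|_{L^2_v}$ for smooth compactly supported kernels, to show that $Q^+(f^\sigma_M,\hf^\sigma_M)$ is precompact in $L^1$ for truncated data $f^\sigma_M$, hence $Q^+(f^\sigma,\hf^\sigma)\to Q^+(f,\hf)$ in measure; and then (ii) a general transport-equation convergence theorem of Lions (his Part II, Theorem~3.2), applied to $\partial_t f^\sigma+v\cdot\nabla_x f^\sigma=Q^+(f^\sigma,\hf^\sigma)-L(\hf^\sigma)f^\sigma$, which upgrades this to $f^\sigma\to f$ in $C([0,T];L^1)$. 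Neither ingredient appears in your proposal, and without them the strong convergence claim is unsupported.

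This gap also propagates: your step 3 presupposes strong $L^1$ convergence of $f^\sigma$ to identify the limit of the loss term and to handle the gain term by truncation, so the logic is circular as written. In the paper the order is reversed---convergence in measure of $Q^+$ is established first (via the smoothing effect and velocity averaging applied to auxiliary equations for $k(f^\sigma)$ and $k(\hf^\sigma)$), and only then is strong convergence of $f^\sigma$ deduced. A further point you gloss over is the role of $\hf^\sigma=f^\sigma*_x\kappa^\sigma$: the paper tracks $f^\sigma$ and $\hf^\sigma$ as separate sequences with a priori distinct weak limits $f$ and $\hf$, proves $L(\hf^\sigma)\to L(\hf)$ strongly in $L^1_{\loc}$ via averaging on the convolved renormalised equation, and only after obtaining strong convergence of $f^\sigma$ concludes $f=\hf$. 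Your treatment of the $\kappa^\sigma$ convolution as a routine approximation of the identity hides this structure.
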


The definitions of weak and renormalised solutions are given in Definition~\ref{def:sols}. The existence of the weak solutions to the fuzzy Boltzmann equations satisfying the assumptions in Theorem \ref{thm:main} are given in Theorem~\ref{thm:existence}.

Denote by $f^\sigma$ the solution of the fuzzy Boltzmann equation \eqref{pre:FBE} corresponding to  $\kappa^\sigma$. Define the function
\begin{equation}
\label{def:hf}
\hf^\sigma(x,v)\defeq  f^\sigma*_x\kappa^\sigma=\int_{\R^d}f^\sigma(x_*,v)\kappa^\sigma(x-x_*)\dd x_*,
\end{equation}
and notice that $\|\hf^\sigma\|_{L^\infty_xL^1_v}\le \|\kappa^\sigma\|_{L^\infty(\R^d)}\|f^\sigma\|_{L^1(\Do)}$.

The fuzzy Boltzmann equation \eqref{pre:FBE} takes the form
\begin{equation}
\label{FBE}
(\d_t+v\cdot\nabla_x)f^\sigma=Q(f^\sigma,\hf^\sigma)= Q^\sigma_{\sf fuz}(f^\sigma,f^\sigma).
\end{equation}

\textcite{DL89} and  \textcite{Lio94} showed the compactness and stability of the inhomogeneous Boltzmann equation \eqref{IBE} in the sense that if ${f^n}$ is a sequence of renormalised solutions with initial values $f^n_0$, then $f^n_0 \to f_0$ in $L^1(\Do)$ if and only if $f^n$ converges strongly to a renormalised solution $f$ with initial value $f_0$.

We follow their compactness argument to prove Theorem~\ref{thm:main} about the convergence of the fuzzy Boltzmann equation. The remaining sections are organised as follows:
In Section~\ref{sec:pre}, we discuss different types of solutions and their properties. Moreover, we show that (up to a subsequence)
\begin{equation*}
    f^\sigma\rightharpoonup f\quad\text{and}\quad \hf^\sigma\rightharpoonup\hf \quad\text{in}\quad L^1([0,T]\times\Do)
\end{equation*}
as $\sigma\to0$ for some $f$ and $\hf\in L^1([0,T]\times\Do)$.
In Section~\ref{sec:weak}, we apply the velocity averaging technique to \eqref{FBE} with $\sigma$ to derive strong compactness results.
In Section~\ref{sec:strong}, we show the strong convergence $f^\sigma\to f$ in $C([0,T];L^1(\Do))$, and as a consequence, the equality $f=\hf$. We show that $f$ is indeed a renormalised solution of the inhomogeneous Boltzmann equation.

\subsection*{Acknowledgements}
Funded by the Deutsche Forschungsgemeinschaft (DFG, German Research Foundation) – Project-ID 317210226 – SFB 1283.

\section{Solvability and compactness results}
\label{sec:pre}

In Subsection~\ref{sec:def-prop}, we will introduce definitions,  properties, and solvability of fuzzy Boltzmann equations \eqref{FBE}. In Subsection~\ref{sec:weak-cmpt}, we will show the weak compactness of approximation quantities.

\subsection{Definition and properties of solutions}
\label{sec:def-prop}
Throughout this article, we use the following abbreviation
\begin{align*}
f=f(x,v),\quad f_*=f(x,v_*),\quad f'=f(x,v'),\quad f'_*=f(x,v'_*)\;.
\end{align*}
Occasionally, we use the abbreviation $f_*=f(x_*,v_*)$, $f'_*=f(x_*,v'_*)$ instead. This will be explicitly highlighted and no confusion should arise.

Throughout this paper, we make the following assumptions on the 
collision kernel $B$.
\begin{assumption}\label{CK}
Let $\mu\in [0,1]$.
The collision kernel $B(v,w)\in L^1_\loc (\R^d\times S^{d-1};\R_+)$,  depends only on $|v|$ and $|\langle v,w\rangle|$, 
and satisfies
the following growth condition
\begin{equation*}
    0 \le B(v,w)\le C\langle v\rangle ^\mu\quad \forall v\in \R^d,\quad w  \in S^{d-1}.
\end{equation*}
\end{assumption}

We will introduce weak and renormalised solutions of fuzzy Boltzmann equations, where the definition of renormalised solutions for inhomogeneous Boltzmann equations \eqref{IBE} were introduced in \cite{DL89b}.
\begin{definition}
\label{def:sols}
For any given initial value $f_0\in L^1(\Do)$, we say $f\in C([0,T]; L^1(\Do))$ 
is a \emph{weak solution} of \eqref{FBE}, if
    \begin{equation*}
(\d_t+v\cdot\nabla_x)f^\sigma=Q(f^\sigma,\hf^\sigma)
    \end{equation*}
    holds in the distribution sense, where $\hf^\sigma$ is defined as in \eqref{def:hf}.
    
We say $f$ is a \emph{renormalised solution} of \eqref{FBE} for $\alpha\in(0,1)$, if
     \begin{equation}
     \label{FBE:renorm}
(\d_t+v\cdot\nabla_x)g^{\sigma,\alpha}=Q^{\alpha}(f^\sigma,\hf^\sigma)
    \end{equation}
holds in the  distribution sense, where 
\begin{equation}
\label{def:g}
g^{\sigma,\alpha}\defeq\alpha^{-1}\log(1+\alpha f^\sigma)\quad\text{and}\quad Q^{\alpha}(f^\sigma,\hf^\sigma)\defeq \frac{Q(f^\sigma,\hf^\sigma)}{1+\alpha f^\sigma}.
\end{equation}

\end{definition}

Let $B^z_R\defeq\{z\in \R^k\mid |z|\le R\}$ be the ball centred at $0$ with radius $R$. 

\begin{remark}
\begin{itemize}
    \item The weak and renormalised solutions of the fuzzy Boltzmann equation are equivalent when the collision kernel $B$ satisfies Assumption~\ref{CK}. The proof can follow \cite{DL89}, and the key point is that the gain and loss parts of the collision term are local integrable $Q^\pm(f^\sigma,\hf^\sigma)\in L^1_\loc(\Do)$, i.e. 
\begin{equation*}
\label{apriori}
\|Q^\pm(f^\sigma,\hf^\sigma)\|_{L^1(\R^d\times B^v_R)}\lesssim R \|\kappa^\sigma\|_{L^\infty(\R^d)}\|f^\sigma\|_{L^1(\Do)}^2.
\end{equation*}

    \item Let $g^{\sigma,\alpha}$ be defined as in \eqref{def:g}. We define
\begin{equation*}
    \hg^{\sigma,\alpha}\defeq g^{\sigma,\alpha}*_x \kappa^\sigma\quad \text{and}\quad
\hQ^{\sigma,\alpha}(f^\sigma,\hf^\sigma)\defeq  Q^{\alpha}(f^\sigma,\hf^\sigma)*_x\kappa^\sigma.
\end{equation*}
If $f$ is a renormalised solution to the fuzzy Boltzmann equation \eqref{FBE}, then for any fixed $\alpha\in (0,1)$
\begin{equation}
\label{FBE:tilde}
(\d_t+v\cdot\nabla_x) \hg^{\sigma,\alpha}= \hQ^{\sigma,\alpha}_*(f^\sigma,\hf^\sigma)
    \end{equation}
    holds in the distribution sense.  Indeed, for any test function $\varphi\in C^\infty_c([0,T)\times\Do)$, we have
\begin{equation*}
\int_{\Do} \hg^{\sigma,\alpha}\varphi\dd x\dd v=\int_{\Do} g^{\sigma,\alpha}\big(\varphi*_x \kappa^\sigma\big)\dd x\dd v.
\end{equation*}
Thus, \eqref{FBE:renorm} holding in distribution sense ensures that  \eqref{FBE:tilde} holds in distribution sense.
\end{itemize}
\end{remark}

We have the following solvability results.
\begin{theorem}
\label{thm:existence}
Let the collision kernel $B$ satisfy Assumption~\ref{CK}. 
If $f_0\in L^1_{2,2}(\Do;\R_+)$ and  $\cH(f_0)<+\infty$, 
    then for any $T>0$ there exists a global-in-time  weak solution $f^\sigma\in C([0,T];L^1(\Do;\R_+))\cap L^\infty([0,T];L^1_{2,2}(\Do;\R_+))$ such that the following  mass, momentum and energy conservation laws hold
    \begin{align*}
    \int_{\Do}(1,v,|v|^2)f^\sigma_t\dd x\dd v=    \int_{\Do}(1,v,|v|^2)f_0\dd x\dd v.
    \end{align*}
    Moreover, we have the following entropy inequality
\begin{equation}
\label{ineq:EE}
\begin{aligned}
\cH(f^\sigma)-\cH(f_0)+\int_0^T\cD(f^\sigma)\dd t\le0
\end{aligned}
\end{equation}
for all $t\in[0,T]$,
and we have the estimate $\|f^\sigma_t\|_{L^1_{2,0}(\Do)}\lesssim_T \|f_0\|_{L^1_{2,2}(\Do)}$.

\end{theorem}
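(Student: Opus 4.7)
I would prove existence by a truncation and compactness scheme mirroring the classical homogeneous strategy (\`a la Arkeryd, Mischler--Wennberg), adapted to the fuzzy setting. Replace $B$ by the bounded kernel $B_n\defeq B\wedge n$ and regularise the initial datum to $f_0^n\in L^1_{2,2}\cap L^\infty(\Do;\R_+)$ with $f_0^n\to f_0$ in $L^1_{2,2}$ and $\cH(f_0^n)\to\cH(f_0)$. For fixed $\sigma>0$ the truncated collision operator $Q^{n,\sigma}_{\sf fuz}$ satisfies a Lipschitz estimate in $L^1(\Do)$ on bounded balls, with constant controlled by $n\|\kappa^\sigma\|_{L^\infty}$. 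Combining this with the $L^1$-isometric free-transport semigroup $T_t$, the Duhamel formula
\begin{equation*}
f^n(t)=T_tf_0^n+\int_0^tT_{t-s}Q^{n,\sigma}_{\sf fuz}(f^n,f^n)(s)\dd s
\end{equation*}
produces a unique local solution via Banach fixed point. Positivity is preserved by rewriting $Q^{n,\sigma}_{\sf fuz}=Q^+-f\cdot L^{n,\sigma}[f]$, where the loss rate $L^{n,\sigma}[f]=\int B_n(v-v_*,\omega)\hf^n(x,v_*)\dd v_*\dd\omega$ is non-negative and bounded, together with an integrating-factor representation. Global existence follows once mass conservation is established.

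\textbf{A priori estimates.} The fuzzy operator inherits the standard symmetrisation
\begin{equation*}
\int_{\Do}Q^{n,\sigma}_{\sf fuz}(f,f)\varphi\dd x\dd v=-\tfrac14\int_{\G}\kappa^\sigma B_n(f'f'_*-ff_*)(\varphi+\varphi_*-\varphi'-\varphi'_*)\dd\sigma,
\end{equation*}
where here $f_*=f(x_*,v_*)$ and $\varphi_*=\varphi(x_*,v_*)$; the symmetry $\kappa^\sigma(x-x_*)=\kappa^\sigma(x_*-x)$ is what makes the $(x,v)\leftrightarrow(x_*,v_*)$ swap admissible. Testing with $\varphi=1,v,|v|^2$ yields conservation of mass, momentum and energy, since these are collisional invariants; the growth $\mu\le1$ together with the energy bound justify all manipulations. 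Testing with $\varphi=|x|^2$ gives $\d_t\!\int|x|^2f^n\le\int(|x|^2+|v|^2)f^n$ (since $|x|^2$ is not changed by the collision), and Gr\"onwall yields the announced estimate $\|f^n_t\|_{L^1_{2,0}}\lesssim_T\|f_0\|_{L^1_{2,2}}$. A standard regularisation $\log(f^n+\varepsilon)\to \log f^n$ combined with the pointwise inequality $(f'f'_*-ff_*)\log(f'f'_*/(ff_*))\ge0$ finally gives the truncated entropy identity $\cH(f^n_t)+\int_0^t\cD^n(f^n_s)\dd s=\cH(f^n_0)$.

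\textbf{Passage to the limit and main obstacle.} The entropy and moment bounds yield uniform equi-integrability and spatial tightness of $\{f^n_t\}$ in $L^1(\Do)$ via de la Vall\'ee Poussin. Testing the equation against $\varphi\in C^\infty_c(\Do)$ and using the uniform bound $\|Q^{n,\sigma}_{\sf fuz}(f^n,f^n)\|_{L^1}\lesssim\|\kappa^\sigma\|_{L^1}\|f^n\|_{L^1}\|f^n\langle v\rangle^\mu\|_{L^1}$ shows that $t\mapsto\int f^n_t\varphi$ is equi-continuous. Arzel\`a--Ascoli combined with the Vitali convergence theorem then upgrades weak $L^1$ compactness to strong convergence $f^n\to f^\sigma$ in $C([0,T];L^1(\Do))$ along a subsequence. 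Since $\hf^n=f^n*_x\kappa^\sigma$ converges strongly in $C([0,T];L^\infty_xL^1_v)$ and $B_n\uparrow B$, dominated convergence passes the limit inside the bilinear weak formulation. The entropy inequality \eqref{ineq:EE} follows from the lower semicontinuity of $\cH$ and of $\cD$ under these convergences, combined with $\cH(f_0^n)\to\cH(f_0)$. The main technical obstacle is controlling large-velocity tails when $\mu>0$: to pass to the limit in the product $\kappa^\sigma B_n (f^n)'(f^n)'_*$ one must first truncate $|v|,|v_*|\le R$ and then exploit the entropy-based equi-integrability to send $R\to\infty$ uniformly after $n\to\infty$, the $\langle v\rangle^2$-moment bound alone being insufficient for such a bilinear expression.
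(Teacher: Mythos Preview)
Your approximation-and-compactness scheme is a sound route to existence and is close in spirit to what the paper relies on (via its reference to \cite{EH25}): truncate the kernel, solve the truncated problem by fixed point, derive a priori bounds from the symmetrised weak formulation, and pass to the limit using Dunford--Pettis/Vitali. The conservation laws at the truncated level, the $L^1_{2,0}$ estimate from $\d_t\int|x|^2 f^n$, and the lower-semicontinuity argument for the entropy inequality are all correct.

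There is, however, a genuine gap in your passage to the limit for the \emph{energy conservation law}. Your a priori identity $\int|v|^2 f^n_t=\int|v|^2 f^n_0$ holds for each $n$, but the compactness you extract is only strong $C([0,T];L^1(\Do))$, which does not control the tail of $|v|^2 f^n_t$. Fatou (or weak lower semicontinuity of the second moment) then yields only the one-sided inequality $\int|v|^2 f^\sigma_t\le\int|v|^2 f_0$. Equality would require uniform integrability of $|v|^2 f^n_t$, i.e.\ a uniform $L^1_{0,2+\varepsilon}$ bound, which for $\mu>0$ and $f_0\in L^1_{2,2}$ only is not available from your estimates. The paper closes exactly this gap in the appendix, following \cite{Lu99}: one tests the limiting equation with the concave surrogate $\phi_\varepsilon(v)=\varepsilon^{-1}\log(1+\varepsilon|v|^2)$, decomposes the collision contribution as $K_\varepsilon-J_\varepsilon$ with $K_\varepsilon\ge0$ and $J_\varepsilon\to0$ dominatedly, and thereby obtains the reverse inequality $\int|v|^2 f^\sigma_t\ge\int|v|^2 f_0$. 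This step is absent from your outline and is precisely the new ingredient needed when the initial datum lies only in $L^1_{2,2}$.
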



\begin{remark}
\begin{itemize}
    \item 

    When $\mu=0$, the weak solutions in Theorem \ref{thm:existence} are unique, and the entropy identity holds, i.e. \eqref{ineq:EE} hold with equal signs, which has been shown in \cite{EH25}. 
    
     \item (Energy conservation law.) When $\mu\in(0,1]$, the solvability results have been shown in \cite{EH25} with stronger initial values in $L^1_{2,2+\mu}(\Do)$, where we obtain energy and entropy identities in \eqref{ineq:EE}. One can straightforwardly follow the arguments in \cite{EH25} to derive the solvability with $f_0\in L^1_{2,2}(\Do)$, for which we only have energy and entropy inequalities, i.e.
     \begin{equation*}
\int_{\Do}|v|^2f_t\dd x\dd v\le\int_{\Do}|v|^2f_0\dd x\dd v\quad\forall t\in[0,T].
\end{equation*} 
One can show the energy conservation laws
by following the arguments for homogeneous Boltzmann equations \eqref{eq:homo} in \cite{Lu99} to show that the energy is non-decreasing over time. Combining the energy inequality \eqref{ineq:EE}, we conclude the energy conservation laws. We put the detailed proofs in the Appendix.

\item (Uniqueness.) When $\mu\in(0,1]$, in the case of the collision kernels take the precise form
\begin{equation*}
    B(v,w)=|v|^\mu b(\theta),
\end{equation*}
where $\theta$ denotes the collision angle $\theta=\arccos\frac{\langle v,w\rangle}{|v|}$ and $b$ is a bounded function. By following the results in \cite{MW99} for homogeneous Boltzmann equations \eqref{eq:homo}, one can show the uniqueness of the energy-conserved solutions on the domain $\T^3\times\R^3$. We put the detailed proof in the Appendix.
\end{itemize}
\end{remark}

We summarise properties of the weak solutions for fuzzy Boltzmann equation \eqref{FBE}. We recall the definition 
$\hf^\sigma=f^\sigma*_x \kappa^\sigma$. 
\begin{proposition}\label{prop:moment:ent}
Let $\sigma\in(0,1)$. Let the collision kernel $B$ satisfy Assumption~\ref{CK}. Let $f^\sigma\in C([0,T];L^1(\Do;\R_+))$ be a weak solution of the fuzzy Boltzmann equation \eqref{FBE} with initial value  $f_0\in L^1_{2,2}(\Do;\R_+)$ such that $|\cH(f_0)|<+\infty$. For all $T>0$, we have
\begin{equation}
\label{bdd:L122}
    \begin{aligned}
&\|f^\sigma_t\|_{L^1(\Do)}=\|\hf^\sigma_t\|_{L^1(\Do)}=\|f_0\|_{L^1(\Do)},\\
&\||v|^2f^\sigma_t\|_{L^1(\Do)}=\||v|^2\hf^\sigma_t\|_{L^1(\Do)}\le\||v|^2f_0\|_{L^1(\Do)},\\
&\|f^\sigma_t\|_{L^1_{2,0}(\Do)}\lesssim_T\|f_0\|_{L^1_{2,2}(\Do)}.
        \end{aligned}
            \end{equation}
The following entropy inequality holds
\begin{equation}
\label{ineq:cH}
    \cH(f^\sigma_t)-\cH(f_0)\le-\int_0^t \cD(f^\sigma_s)\dd s\quad\forall t\in[0,T],
\end{equation}
where $f_*=f(x_*,v_*)$ and $f'_*=f(x_*,v_*')$, and
\begin{equation}
\label{def:dissipation}
\cD(f)\defeq \frac14\int_{\R^{4d}\times S^{d-1}}\kappa^\sigma B\big(f'f'_*-ff_*\big)\big(\log(f'f'_*)-\log(ff_*)\big)\dd\sigma.
\end{equation}
In addition, if the collision kernel $\langle v\rangle^{\mu}\lesssim B$ has a polynomial lower bound, then equality holds in \eqref{ineq:cH}.

The following uniform bounds hold   \begin{equation}
\label{bdd:H-D}
\sup_{\sigma}\Big(\|f^\sigma_t\log f^\sigma_t\|_{L^\infty([0,T];L^1(\Do))}+\int_0^T\cD(f^\sigma_t)\dd t\Big)
\le C
        \end{equation}
        for some constant $C=C(T)>0$ depending only on $T$.
\end{proposition}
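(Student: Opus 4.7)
The plan is three-fold: derive the moment identities and bounds by testing against collision invariants; establish the entropy inequality through a rigorous $H$-theorem obtained by renormalisation; and combine the two to extract the uniform controls. First, the identities $\|f^\sigma_t\|_{L^1(\Do)} = \|\hf^\sigma_t\|_{L^1(\Do)}$ and $\||v|^2 f^\sigma_t\|_{L^1(\Do)} = \||v|^2 \hf^\sigma_t\|_{L^1(\Do)}$ in \eqref{bdd:L122} are immediate from Fubini, since $\kappa^\sigma$ is a probability density in $x$ and the weights $1,|v|^2$ do not depend on $x$. Mass conservation and the energy inequality come from testing \eqref{FBE} against smooth $(x,v)$-truncations of $\varphi(v)\in\{1,|v|^2\}$: the a priori bound \eqref{apriori} legitimises the computation, and the standard symmetrisations $(v,v_*)\leftrightarrow(v',v'_*)$ and $(v,v_*)\leftrightarrow(v_*,v)$, combined with the manifest $(x,x_*)$-symmetry of $\kappa^\sigma(x-x_*)$, cancel the collision contribution. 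For the $x$-moment, testing against $\langle x\rangle^2$ annihilates the collision term (since $x$ is unchanged under collisions) and leaves
\[
\frac{\mathrm{d}}{\mathrm{d}t}\int_{\Do}\langle x\rangle^2 f^\sigma\dd x\dd v \;=\; \int_{\Do} 2x\cdot v\, f^\sigma\dd x\dd v \;\le\; \int_{\Do}\bigl(\langle x\rangle^2 + |v|^2\bigr) f^\sigma\dd x\dd v,
\]
so Grönwall, together with the already conserved mass and bounded energy, gives the third estimate.

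For the entropy inequality \eqref{ineq:cH}, the formal $H$-theorem multiplies \eqref{FBE} by $\log f^\sigma$: the transport term integrates to zero, and the four-fold symmetrisation above applied to the collision term produces $\int\log f^\sigma\, Q^\sigma_{\sf fuz}(f^\sigma,f^\sigma)\dd x\dd v = -\cD(f^\sigma)$. To justify this rigorously, I would regularise $\log$ by $\log(\cdot+\varepsilon)$, i.e.\ apply the chain rule with $\beta_\varepsilon(s) = (s+\varepsilon)\log(s+\varepsilon)-s$ in the renormalised formulation from \eqref{FBE:renorm}. This yields an exact identity for $\int\beta_\varepsilon(f^\sigma)\dd x\dd v$ whose right-hand side is the symmetrised analogue of $\cD(f^\sigma)$ with $f$ replaced by $f+\varepsilon$. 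Passing $\varepsilon\to 0$, the left-hand side converges to $\cH(f^\sigma_t)-\cH(f_0)$ by dominated/monotone convergence, using the moment bounds to control the negative part via a Gaussian comparison (see next paragraph), while Fatou's lemma applied to the non-negative dissipation integrand yields $\int_0^t\cD(f^\sigma_s)\dd s$ in the limit with an inequality, establishing \eqref{ineq:cH}. Equality under a polynomial lower bound $\langle v\rangle^\mu\lesssim B$ requires improved integrability of the gain and loss terms separately and can be imported from \cite{EH25}.

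For the uniform bounds \eqref{bdd:H-D}, the convex inequality $a\log a \ge a\log b + a - b$ applied with $b=\exp(-\langle x\rangle^2-\langle v\rangle^2)$ gives
\[
\int_{\Do}\bigl(f^\sigma\log f^\sigma\bigr)_-\dd x\dd v \;\le\; \int_{\Do}\bigl(\langle x\rangle^2+\langle v\rangle^2\bigr)f^\sigma\dd x\dd v + \int_{\Do} e^{-\langle x\rangle^2-\langle v\rangle^2}\dd x\dd v,
\]
which is uniformly controlled by \eqref{bdd:L122}. Combined with the upper bound $\cH(f^\sigma_t)\le\cH(f_0)$ coming from \eqref{ineq:cH}, this yields $\sup_{\sigma,t}\|f^\sigma_t\log f^\sigma_t\|_{L^1(\Do)}\le C$, and integrating \eqref{ineq:cH} over $[0,T]$ bounds the dissipation. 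The main technical obstacle will be the $\varepsilon\to 0$ passage in the dissipation: one needs enough integrability of the regularised gain and loss to identify the limit as $\cD(f^\sigma)$ and apply Fatou in the correct direction, which is precisely what the moment and entropy controls provide through \eqref{apriori} and its weighted refinements.
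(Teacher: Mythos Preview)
Your sketch is correct and contains all the right ideas. The main structural difference from the paper is one of scope: the paper does not reprove the conservation laws, the $L^1_{2,2}$ moment bound, or the entropy inequality \eqref{ineq:cH} at all, but simply imports them from the companion paper \cite{EH25} (where the fuzzy equation was introduced and these properties established). The only part actually argued in the present paper is the uniform bound \eqref{bdd:H-D}, and there your approach and the paper's coincide: both control $(f\log f)_-$ by the $L^1_{2,2}$ moments via the Gaussian/Maxwellian comparison (the paper phrases it as $\|f\log f\|_{L^1}\le \cH(f)+C\|f\|_{L^1_{2,2}}$, citing \cite{JKO98}), then feed in \eqref{ineq:cH} and \eqref{bdd:L122}.

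Your self-contained route --- testing against collision invariants, the Gr\"onwall argument for the $x$-moment, and the $\beta_\varepsilon(s)=(s+\varepsilon)\log(s+\varepsilon)-s$ renormalisation with Fatou for the dissipation --- is exactly what one expects the proof in \cite{EH25} to look like, and is the standard playbook from the homogeneous Boltzmann literature. Its advantage is transparency; the paper's advantage is brevity, since those computations were already done elsewhere. There is no genuine gap in your argument.
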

\begin{proof}
The conservation laws and bounds \eqref{bdd:L122} with $f^\sigma$ and entropy identity \eqref{ineq:cH} have been showed in \cite{EH25}. And \eqref{bdd:L122} hold with $\hf^\sigma$ by definition.

To show \eqref{bdd:H-D}, we use the following estimate, see for example \cite{JKO98}
\begin{align*}
\|f^\sigma_t\log f^\sigma_t\|_{L^1(\Do)}\le \cH(f^\sigma_t)+C\|f^\sigma_t\|_{L^1_{2,2}(\Do)}
\end{align*}
for all $\sigma\in(0,1)$ and $t\in[0,T]$.
We substitute the entropy inequality \eqref{ineq:cH} and the bounds on $\|f\|_{L^1_{2,2}(\Do)}$ to derive
\begin{align*}
\|f^\sigma_t\log f^\sigma_t\|_{L^1(\Do)}+\int_0^t\cD(f^\sigma_\tau)\dd \tau\le \cH(f_0)+C(T)\|f_0\|_{L^1_{2,2}(\Do)}    
\end{align*}
for all $t\in[0,T]$.
\end{proof}

\begin{remark}
    \begin{itemize}
    
        \item In \cite{DL89,Lio94}, DiPerna and Lions worked with a more general collision kernel satisfying the assumptions 
        \begin{equation}
        \label{CK:DL}
             B\in L^1_\loc(\R^d\times S^{d-1})\quad\text{and}\quad \lim_{|v|\to\infty}\langle v\rangle^{-2}\int_{\{|z-v|\le R\}}A(z)\dd z=0, 
        \end{equation}
        where $A$ is defined as 
        \begin{equation*}
        A(z)\defeq \int_{S^{d-1}} B(z,w)\dd\omega.    
        \end{equation*}
        We note that the collision kernels satisfying Assumption~\ref{CK} also satisfy the assumption \eqref{CK:DL}.
        \item The convergence results in Theorem~\ref{thm:main} hold also for the l collision kernel satisfying \eqref{CK:DL}. One can adapt our proof by approximating the collision kernel $B(z,w)$ by $\min(B,N) \mathbb{1}_{\{|z|\le N\}}\in L^1\cap L^\infty(\R^d\times S^{d-1})$ and pass to the limit.
        
        \item Similar to the inhomogeneous Boltzmann equations in \cite{DL89}, the compactness arguments Section~\ref{sec:pre}-\ref{sec:strong} can also be used to show the existence of solutions of the fuzzy Boltzmann equations \eqref{FBE} for any fixed $\sigma\in(0,1)$. One can construct a sequence of approximated solutions of \eqref{FBE} with a smooth and truncated collision kernel, the compactness results ensures the existence of the weak limits, which solves \eqref{FBE}.
    \end{itemize}
\end{remark}

\subsection{Weak compactness results}
\label{sec:weak-cmpt}
For fixed $\alpha\in(0,1)$ and all $\sigma\in(0,1)$, we will show the weak relative compactness of 
\begin{equation}
\label{weak-cmpt:f}
\{f^\sigma\},\quad\{\hf^\sigma\},\quad\{g^{\sigma,\alpha}\}\quad\text{and}\quad\{\hg^{\sigma,\alpha}\}\quad\text{in}\quad L^1([0,T]\times \Do),
\end{equation}
and, for all $R\ge0$, the weak relative compactness of 
\begin{equation}
\label{weak-cmpt:Q}
\{Q^\alpha(f^\sigma,\hf^\sigma)\}\quad\text{and}\quad \{\hQ^\alpha(f^\sigma,\hf^\sigma)\}\quad\text{in}\quad L^1([0,T]\times \R^d\times B^v_R).
\end{equation}


We use the following Dunford--Pettis theorem to show the weak compactness result.
     \begin{theorem}[Dunford--Pettis]
     \label{thm:DP}
     Let $(\Omega,\mu)$ be a measure space. Then $\{h^\gamma\}_{\gamma\in(0,1)}\subset L^1(\Omega)$ is weakly relatively compact if and only if
     \begin{enumerate}
         \item $\{h^\gamma\}$ is uniformly bounded in $L^1(\Omega)$;
         \item $\{h^\gamma\}$ is equi-integrable, i.e. for any $\varepsilon>0$, there is $\delta>0$ such that for any measurable $A\subset \Omega$ with $\mu(A)<\delta$, we have
         \begin{equation*} \sup_{\gamma}\int_{A}|h^\gamma|\dd\mu<\varepsilon.
         \end{equation*}
         \item for any $\varepsilon>0$, there is $D\subset\Omega$ with $\mu(D)<+\infty$ such that
         \begin{equation*}
          \sup_{\gamma}\int_{D^c}|h^\gamma|\dd\mu<\varepsilon.
         \end{equation*}
     \end{enumerate}
     \end{theorem}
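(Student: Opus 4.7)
The plan is to prove the two implications separately, with the forward direction being a reasonably straightforward consequence of the uniform boundedness principle together with Vitali--Hahn--Saks, while the reverse direction forms the technical core.

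For necessity, norm boundedness (1) follows from weak boundedness: for each $\varphi\in L^\infty(\Omega)$ the linear functional $h\mapsto\int h\varphi\dd\mu$ is bounded on $\{h^\gamma\}$, so the uniform boundedness principle applied via the isometric embedding $L^1\hookrightarrow(L^\infty)^*$ yields a uniform $L^1$ bound. For equi-integrability (2), I would argue by contradiction: if a sequence $h^{\gamma_n}$ and sets $A_n$ with $\mu(A_n)\to 0$ violate (2), I pass to a weakly convergent subsequence $h^{\gamma_{n_k}}\rightharpoonup h$; the measures $\nu_n(E):=\int_E h^{\gamma_n}\dd\mu$ are countably additive and absolutely continuous with respect to $\mu$, and by the Vitali--Hahn--Saks theorem their absolute continuity is uniform in $n$, contradicting the assumed failure on $A_{n_k}$. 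Tightness (3) follows analogously by considering an exhausting sequence of sets of finite measure and applying the same uniform countable additivity principle.

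For sufficiency, I would proceed in two steps. First, by the de la Vall\'ee Poussin theorem, condition (2) is equivalent to the existence of a convex increasing $\Phi:[0,\infty)\to[0,\infty)$ with $\Phi(t)/t\to\infty$ as $t\to\infty$ such that $\sup_\gamma\int\Phi(|h^\gamma|)\dd\mu\leq M<\infty$. Second, I would verify sequential weak compactness by hand. Given a sequence $h_n$, use (3) to select increasing sets $D_k$ with $\mu(D_k)<\infty$ and $\sup_n\int_{D_k^c}|h_n|\dd\mu<1/k$. On each $D_k$ the truncation $h_n^{(N)}:=h_n\mathbf{1}_{\{|h_n|\leq N\}}$ lies in $L^2(D_k)$ with norm at most $N\mu(D_k)^{1/2}$, so the Hilbert space weak compactness yields subsequences converging weakly in $L^2(D_k)$, hence in $L^1(D_k)$. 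The superlinear bound above forces $\|h_n-h_n^{(N)}\|_{L^1}\to 0$ uniformly in $n$ as $N\to\infty$, since on $\{|h_n|>N\}$ one has $|h_n|\leq\Phi(|h_n|)/(\Phi(N)/N)$ with $\Phi(N)/N\to\infty$. A Cantor diagonal argument across $k$ and $N$ then produces a single subsequence converging weakly in $L^1(D_k)$ for every $k$, and combined with the tightness tail estimate this gives weak convergence in $L^1(\Omega)$. Finally, the Eberlein--Smulian theorem upgrades sequential weak relative compactness to weak relative compactness.

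The main obstacle is ensuring that the diagonal extraction assembles into a coherent $L^1$ limit, so that the weak limits on the nested sets $D_k$ and at the truncation levels $N$ are mutually compatible. This is where the superlinear control from de la Vall\'ee Poussin is essential: it simultaneously bounds the truncation errors and prevents mass from escaping either to infinity in values or outside the $D_k$, so that the candidate limit is a genuine element of $L^1(\Omega)$ rather than merely a consistent family on a filtration. The remaining pieces are routine combinations of Banach space duality and $\sigma$-finite measure theory.
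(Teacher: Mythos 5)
The paper does not prove Theorem~\ref{thm:DP}; it is quoted as the classical Dunford--Pettis criterion and used as a black box throughout Section~\ref{sec:weak-cmpt}, so there is no in-paper argument for you to match. Your sketch is a correct outline of one of the standard proofs: necessity via the uniform boundedness principle, Eberlein--\v{S}mulian to extract a weakly convergent subsequence, and Vitali--Hahn--Saks to turn setwise convergence of the measures $E\mapsto\int_E h^{\gamma_n}\dd\mu$ into uniform absolute continuity and uniform tightness; sufficiency via de la Vall\'ee Poussin, truncation to $L^2$ on sets of finite measure, Hilbert space weak compactness, and a diagonal/tail estimate that assembles a global $L^1$ weak limit. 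That is a perfectly legitimate route, and the de la Vall\'ee Poussin control does indeed furnish exactly the quantitative handle needed to make the truncation error $\|h_n-h_n^{(N)}\|_{L^1}$ uniformly small, which is the crux of the diagonal argument.

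Two small corrections worth making if you were to write this out in full. First, de la Vall\'ee Poussin is equivalent to \emph{uniform integrability}, i.e. conditions (1) and (2) taken together, not to (2) alone: on an infinite measure space $h_n=\mathbb{1}_{[0,n]}$ satisfies (2) but has unbounded $L^1$ norm and admits no superlinear $\Phi$-bound, so you must invoke both hypotheses to obtain $\Phi$. Since (1) is part of the sufficiency hypotheses this does not break the argument, but the equivalence as you stated it is false. Second, the diagonal assembly needs one more explicit step: after extracting a single subsequence $h_{n_j}$ that converges weakly in $L^1(D_k)$ to some $g_k$ for every $k$, you should note that the $g_k$ are consistent (because $\mathbb{1}_{D_k}$ is an $L^\infty$ multiplier), that weak lower semicontinuity of the $L^1$ norm together with the tightness bound $\sup_n\int_{D_k^c}|h_n|\le 1/k$ gives $\|g_l\|_{L^1(D_l\setminus D_k)}\le 1/k$, so the zero-extended $g_k$ form a Cauchy sequence with limit $h\in L^1(\Omega)$; then splitting any test functional over $D_k$ and $D_k^c$ yields $h_{n_j}\rightharpoonup h$ in $L^1(\Omega)$. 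You gesture at this in your last paragraph, and it is precisely the right obstacle to flag, but the actual mechanism is the lower semicontinuity estimate on the tails, not just the $\Phi$-bound. Finally, condition (3) as a proof target for necessity presumes the measure space is at least $\sigma$-finite (so that an exhaustion $D_k\uparrow\Omega$ exists); this is harmless here since the paper only applies the theorem on $[0,T]\times\R^{2d}$ and variants, but it is worth recording as a hypothesis if you want the statement to be airtight.
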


We first show the weak compactness of \eqref{weak-cmpt:f}.  
\begin{lemma}\label{lemma:f}
Let $\alpha\in(0,1)$ be fixed. The sequences 
\begin{equation*}
    \{f^\sigma\},\quad\{\hf^\sigma\},\quad\{g^{\sigma,\alpha}\}\quad\text{and}\quad\{\hg^{\sigma,\alpha}\}
\end{equation*}
are weakly relatively compact in $L^1([0,T]\times\Do)$.
\end{lemma}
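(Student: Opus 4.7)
The plan is to verify the three criteria of the Dunford--Pettis theorem (Theorem~\ref{thm:DP}) on the measure space $([0,T]\times\Do, \dd t\dd x\dd v)$ for each of the four families. The key inputs will be the uniform estimates from Proposition~\ref{prop:moment:ent}: mass and energy conservation, the second moment in $x$, and the uniform entropy bound \eqref{bdd:H-D}. Two elementary structural observations will then reduce the problem for $g^{\sigma,\alpha},\hf^\sigma,\hg^{\sigma,\alpha}$ to the case of $f^\sigma$: first, $0\le g^{\sigma,\alpha}=\alpha^{-1}\log(1+\alpha f^\sigma)\le f^\sigma$ pointwise, so any nonnegative-integrand bound for $f^\sigma$ transfers to $g^{\sigma,\alpha}$; second, $\kappa^\sigma\ge0$ with $\|\kappa^\sigma\|_{L^1(\R^d)}=1$, so convolution in $x$ preserves $L^1$-norms and respects Jensen's inequality.

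For the uniform $L^1([0,T]\times\Do)$-bound, mass conservation gives $\|f^\sigma_t\|_{L^1(\Do)}=\|f_0\|_{L^1}$, hence $\|f^\sigma\|_{L^1([0,T]\times\Do)}\le T\|f_0\|_{L^1}$; the same bound follows for the remaining three families from the observations above.

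For equi-integrability, I would invoke the de la Vall\'ee Poussin criterion via the superlinear function $\Phi(s)=s\log(1+s)$ (or equivalently $s\mapsto s\log s$ on $\{s\ge 1\}$). The entropy bound \eqref{bdd:H-D} controls $\|f^\sigma\log f^\sigma\|_{L^\infty([0,T];L^1(\Do))}$, giving equi-integrability of $\{f^\sigma\}$. By convexity of $s\mapsto s\log s$ and Jensen's inequality applied to the probability measure $\kappa^\sigma(x-\cdot)\dd x_*$,
\begin{equation*}
\hf^\sigma\log\hf^\sigma\;\le\;(f^\sigma\log f^\sigma)*_x\kappa^\sigma,
\end{equation*}
so integration in $(x,v)$ and the $L^1$-normalisation of $\kappa^\sigma$ transfer the entropy bound to $\{\hf^\sigma\}$. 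Finally, $g^{\sigma,\alpha}\le f^\sigma$ yields equi-integrability of $\{g^{\sigma,\alpha}\}$, and the same Jensen argument promotes it to $\{\hg^{\sigma,\alpha}\}$.

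For the tightness condition (3), I would take $D=[0,T]\times B^x_R\times B^v_R$ and use Markov's inequality with the bounds
\begin{equation*}
\sup_{t\in[0,T]}\int_{\Do}(\langle x\rangle^2+|v|^2)f^\sigma_t\dd x\dd v\;\le\;C(T)
\end{equation*}
coming from Proposition~\ref{prop:moment:ent}. For the convolved quantities, a change of variables gives
\begin{equation*}
\int_{\Do}\langle x\rangle^2\hf^\sigma\dd x\dd v\;\lesssim\;\|f^\sigma\|_{L^1_{2,0}}+\|f^\sigma\|_{L^1}\!\int_{\R^d}|y|^2\kappa^\sigma(y)\dd y,
\end{equation*}
and since $\int|y|^2\kappa^\sigma(y)\dd y=\sigma\int|y|^2\kappa(y)\dd y\le\int|y|^2\kappa(y)\dd y$ uniformly for $\sigma\in(0,1)$, the $x$-moment of $\hf^\sigma$ stays bounded; the $v$-moment is left untouched by $*_x\kappa^\sigma$. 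The pointwise bound $\hg^{\sigma,\alpha}\le\hf^\sigma$ then handles the last family. The only mildly delicate point is propagating moments and entropy through the convolution $*_x\kappa^\sigma$, which is settled by these two lines, so I do not expect any serious obstacle.
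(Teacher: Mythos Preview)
Your proposal is correct and follows essentially the same approach as the paper: verify the Dunford--Pettis criteria for $f^\sigma$ using mass conservation, the uniform entropy bound, and the $L^1_{2,2}$ moments, then transfer to $\hf^\sigma$ via Jensen's inequality for the convolution, and finally handle $g^{\sigma,\alpha},\hg^{\sigma,\alpha}$ through the pointwise domination $0\le g^{\sigma,\alpha}\le f^\sigma$ and $0\le \hg^{\sigma,\alpha}\le \hf^\sigma$. The only cosmetic difference is in the tightness step for $\hf^\sigma$ in $x$: the paper splits $\{|x|\ge R\}\subset\{|x_*|\ge R/2\}\cup\{|x-x_*|\ge R/2\}$ and uses integrability of $\kappa^\sigma$ on $\{|y|\ge R/2\}$, whereas you directly bound $\int\langle x\rangle^2\hf^\sigma$ by the second moment of $\kappa^\sigma$, which is equally valid and arguably cleaner.
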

\begin{proof}
We first show $f^\sigma$ is weakly relatively compact in $L^\infty([0,T];L^1(\Do))$. We check the conditions in Theorem~\ref{thm:DP}.
\begin{enumerate}
    \item $f^\sigma$ is uniformly bounded in $L^1(\Do)$, since  $\|f^\sigma_t\|_{L^1(\Do)}=\|f_0\|_{L^1(\Do)}$ for all $t$ and $\sigma$.
    \item Let $\Phi(z)=z(\log z)^+$.
We have 
\begin{equation}
\label{equi-int:f-sigma}
    \sup_{t,\sigma}\int_{\Do} \Phi(f^\sigma)\le  \sup_{t,\sigma}\| f^\sigma\log f^\sigma\|_{L^1(\Do)}\le C(T)
\end{equation}
as in Proposition~\ref{prop:moment:ent}, which ensures the equi-integrability of $f^\sigma$.
    \item We observe that
    \begin{equation*}
        \int_{(B^{x,v}_{\sqrt 2 R})^c}f^\sigma\dd x\dd v\le \int_{\R^d\times(B^v_{R})^c}f^\sigma\dd x\dd v+\int_{(B^x_{R})^c\times \R^d}f^\sigma\dd x\dd v.
    \end{equation*}
    By using of the uniform bounds on $\|f^\sigma\|_{L^\infty([0,T];L^1_{2,2}(\Do))}$ as in Proposition~\ref{prop:moment:ent}, we have 
    \begin{equation}
    \label{int:f:vani}
    \begin{aligned}    &\sup_{t,\sigma}\Big(\int_{\R^d\times (B^v_R)^c} f^\sigma\dd x\dd v+\int_{(B^x_R)^c\times \R^d} f^\sigma\dd x\dd v\Big)\\
    \le&{} 2R^{-2}\|f_0\|_{L^1_{2,2}(\Do)}\to0    \quad\text{as }R\to+\infty.
    \end{aligned}  
    \end{equation}
    \end{enumerate}
   We conclude that $\{f^\sigma\}$ is weakly relative compact in $L^\infty([0,T];L^1(\Do))$.

To show the weak compactness of $\hf^\sigma$, we recall the definition
\begin{align*}
\hf^\sigma=f^\sigma*_x \kappa^\sigma=\int_{\R^d}f^\sigma(x_*)\kappa^\sigma(x-x_*)\dd x_*.
\end{align*}
We check the conditions in Theorem~\ref{thm:DP}. $\hf^\sigma$ is obviously uniformly bounded in $L^1$. By using of Jensen's inequality, we have 
    \begin{equation}
    \label{ineq:Phi:hf}
        \|\Phi(\hf^\sigma)\|_{L^1(\Do)}\le \int_{\Do}\Phi(f^\sigma)*_x\kappa^\sigma\dd x\dd v\le \|f^\sigma\log f^\sigma\|_{L^1(\Do)},
    \end{equation}
    and we conclude $(2)$ as before.
   The condition $(3)$ is a consequence of 
\begin{equation}
\label{id:h}
\int_{\R^d\times (B^v_R)^c} \hf^\sigma\dd x\dd v=\int_{\R^d\times (B^v_R)^c}f^\sigma\dd x\dd v, 
\end{equation}
and 
\begin{equation}
\label{hf:no-leak:x}
\begin{aligned}
    &\int_{(B^x_R)^c \times\R^d}\hf^\sigma=\int_{\R^{3d}} f^\sigma(x_*,v)\kappa^\sigma(x-x_*)\mathbb{1}_{\{|x|\ge R\}}\dd x\dd x_* \dd v\\
     \le&{}\int_{\R^{3d}} f^\sigma(x_*,v)\kappa^\sigma(y)\big(\mathbb{1}_{\{|x_*|\ge {R}/{2}\}}+\mathbb{1}_{\{|y|\ge {R}/{2}\}}\big)\dd y\dd x_* \dd v\\
     \le&{} \int_{(B^x_{R/2})^c\times\R^d}f^\sigma+\int_{B_{R/2}^c}\kappa^\sigma\to0\quad\text{as}\quad R\to+\infty
\end{aligned}
    \end{equation}
 uniformly in $\sigma$, where we use the convergence \eqref{int:f:vani}.

To show the weak compactness of $g^{\sigma,\alpha}$ and $\hg^{\sigma,\alpha}$, we recall the definition
\begin{align*}
g^{\sigma,\alpha}=\log(1+\alpha f^\sigma)\quad\text{and}\quad \hg^{\sigma,\alpha}=g^{\sigma,\alpha}*_x\kappa^\sigma.
\end{align*}
Notice that 
    \begin{equation*}
    0\le g^{\sigma,\alpha}\le f^\sigma\quad\text{and}\quad 0\le \hg^{\sigma,\alpha}\le \hf^\sigma.   
    \end{equation*}
Thus the weak compactness of $g^{\sigma,\alpha}$ and $\hg^{\sigma,\alpha}$ follow from the weak compactness of  $f^\sigma$ and $\hf^\sigma$.

\end{proof}

 We define the renormalised gain and loss collision terms as follows
\begin{align*}
Q^{\alpha,+}(f^{\sigma},\hf^\sigma)&
\defeq\frac{1}{1+\alpha f^\sigma}\int_{\R^d\times S^{d-1}}B(f^{\sigma})'(\hf^{\sigma})_*'\dd v_*\dd \omega,\\
Q^{\alpha,-}(f^{\sigma},\hf^\sigma)&\defeq\frac{f^{\sigma}}{1+\alpha f^\sigma}\int_{\R^d\times S^{d-1}}B\hf^{\sigma}_*\dd v_*\dd\omega\\
&=\frac{f^{\sigma}}{1+\alpha f^\sigma}L(\hf^\sigma),
\end{align*}
where we define 
\begin{align*}
L(f)(x,v)\defeq \int_{\R^d}f(x,v_*)A(v-v_*)\dd v_*\quad\text{and}\quad A(z)=\int_{S^{d-1}}B(z,\omega)\dd \omega.
\end{align*}
Correspondingly, we define 
\begin{equation*}
\hQ^{\alpha,+}(f^{\sigma},\hf^\sigma)\defeq Q^{\alpha,+}(f^{\sigma},\hf^\sigma)*_x\kappa^\sigma\quad\text{and}\quad  \hQ^{\alpha,-}(f^{\sigma},\hf^\sigma)\defeq Q^{\alpha,-}(f^{\sigma},\hf^\sigma)*_x\kappa^\sigma.    
\end{equation*}


\begin{lemma}\label{lemma:Q}
Let $\alpha\in(0,1)$ be fixed. Let $R\ge 0$. The sequences 
\begin{equation*}
    \{Q^{\alpha,\pm}(f^\sigma,\hf^\sigma)\}\quad\text{and}\quad\{\hQ^{\alpha,\pm}(f^\sigma,\hf^\sigma)\}
\end{equation*}
are weakly relatively compact in $L^1([0,T]\times\R^d\times B^v_R)$.
\end{lemma}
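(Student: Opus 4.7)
The plan is to verify the three conditions of the Dunford--Pettis criterion (Theorem~\ref{thm:DP}) for each of the four sequences, following the structure of Lemma~\ref{lemma:f}. I would first handle the loss terms directly, and then reduce the gain terms to them by means of the entropy dissipation.

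For the loss term, the renormalisation gives the trivial pointwise bound $\frac{f^\sigma}{1+\alpha f^\sigma}\le \alpha^{-1}$. Combined with Assumption~\ref{CK}, which yields $A(z)\lesssim \langle z\rangle^\mu$, this produces, for any $v\in B^v_R$,
\begin{equation*}
Q^{\alpha,-}(f^\sigma,\hf^\sigma)(x,v)\le \alpha^{-1} L(\hf^\sigma)(x,v)\le C_{R,\alpha}\int_{\R^d}\hf^\sigma(x,v_*)(1+|v_*|^\mu)\dd v_*.
\end{equation*}
The uniform moment bounds on $\hf^\sigma$ from Proposition~\ref{prop:moment:ent} then yield the uniform $L^1([0,T]\times\R^d\times B^v_R)$-bound, while equi-integrability and tightness are inherited from those of $\hf^\sigma$ established in Lemma~\ref{lemma:f}, combined with a truncation in $|v_*|$ that is absorbed by the $|v_*|^2$-moment.

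For the gain term, the key tool is the elementary inequality $(a-b)\log(a/b)\ge 4(\sqrt a-\sqrt b)^2$ for $a,b>0$, which together with $a=(\sqrt b+(\sqrt a-\sqrt b))^2\le 2b+2(\sqrt a-\sqrt b)^2$ gives
\begin{equation*}
f'f_*'\le 2ff_* + \tfrac12 (f'f_*'-ff_*)\log\frac{f'f_*'}{ff_*}.
\end{equation*}
Multiplying by $B\kappa^\sigma$, integrating in $(x_*,v_*,\omega)$, and dividing by $1+\alpha f^\sigma$ yields the pointwise bound
\begin{equation*}
Q^{\alpha,+}(f^\sigma,\hf^\sigma)\le 2Q^{\alpha,-}(f^\sigma,\hf^\sigma)+\frac{1}{2(1+\alpha f^\sigma)}\int B\kappa^\sigma(f'f_*'-ff_*)\log\frac{f'f_*'}{ff_*}\dd x_*\dd v_*\dd\omega.
\end{equation*}
The first summand is handled by the loss-term analysis. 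The $L^1([0,T]\times\R^d\times B^v_R)$-norm of the second summand is bounded uniformly by $\tfrac12\int_0^T\cD(f^\sigma)\dd t\le C$ via the entropy dissipation bound in \eqref{bdd:H-D} and the trivial estimate $\frac{1}{1+\alpha f^\sigma}\le 1$.

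For equi-integrability and tightness of the dissipation contribution, I would follow the DL89 strategy: the renormalisation factor $(1+\alpha f^\sigma)^{-1}$ becomes small precisely where $f^\sigma$ is large, and a two-parameter truncation (a large-$f^\sigma$ cut via the renormalisation combined with a cut on the value of the integrand via the uniform $L^1$-mass of the dissipation density) yields the required uniform absolute continuity. The corresponding statements for $\hQ^{\alpha,\pm}(f^\sigma,\hf^\sigma)=Q^{\alpha,\pm}(f^\sigma,\hf^\sigma)*_x\kappa^\sigma$ then follow from those for $Q^{\alpha,\pm}$ by the convolution argument used in Lemma~\ref{lemma:f} (Jensen's inequality as in \eqref{ineq:Phi:hf} for equi-integrability, and the estimate in \eqref{hf:no-leak:x} for tightness). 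I expect the hardest step to be the equi-integrability of the dissipation summand in the gain estimate: the mere $L^1$-control provided by the entropy dissipation is not sufficient, and the splitting made possible by the renormalisation factor $(1+\alpha f^\sigma)^{-1}$ is essential.
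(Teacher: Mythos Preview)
Your treatment of the loss term is essentially the paper's: dominate by $\alpha^{-1}L(\hf^\sigma)$ and show weak compactness of $L(\hf^\sigma)$ via Dunford--Pettis.

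The gain-term argument, however, has a real gap. Your inequality
\[
f'f_*'\le 2ff_* + \tfrac12(f'f_*'-ff_*)\log\frac{f'f_*'}{ff_*}
\]
produces a \emph{fixed} coefficient $\tfrac12$ on the dissipation density $h(f^\sigma)$. You then need equi-integrability (and tightness) of $h(f^\sigma)/(1+\alpha f^\sigma)$, but the only available information on $h(f^\sigma)$ is its uniform $L^1$ bound. Your proposed splitting via the renormalisation factor does not close: on $\{f^\sigma>M\}$ the factor $(1+\alpha f^\sigma)^{-1}$ is small and the piece is controlled by $\tfrac{1}{\alpha M}\|h\|_{L^1}$, but on $\{f^\sigma\le M\}$ the renormalisation gives nothing and you are back to needing $\sup_\sigma\int_A h(f^\sigma)\to0$ as $|A|\to0$, which is precisely the equi-integrability of $h$ that you do not have. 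A further cut on the values of $h$ fails for the same reason: $\int h\,\mathbb{1}_{\{h>N\}}$ need not vanish uniformly as $N\to\infty$.

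The paper avoids this entirely by using the one-parameter Arkeryd inequality
\[
a\le c\,b + \frac{1}{\log c}(a-b)\log\frac{a}{b}\qquad (c>1),
\]
which yields $Q^{\alpha,+}\le c\,Q^{\alpha,-}+(\log c)^{-1}h(f^\sigma)$ for every $c>1$. Given $\varepsilon>0$, one first fixes $c$ so large that $(\log c)^{-1}\|h(f^\sigma)\|_{L^1}<\varepsilon/2$, and \emph{then} uses the already established equi-integrability (resp.\ tightness) of $Q^{\alpha,-}$ to make $c\int_A Q^{\alpha,-}<\varepsilon/2$. The freedom in $c$ is exactly what converts the mere $L^1$ bound on the dissipation into a term that can be made arbitrarily small; your fixed-$c$ version loses this and the argument cannot be completed as written.
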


\begin{proof}
We first show the weak compactness of the loss term $Q^{\alpha,-}(f^\sigma,\hf^\sigma)$. It is enough to show the weak compactness of $\{L(\hf^\sigma)\}$, since  $0\le Q^{\alpha,-}(f^\sigma,\hf^\sigma)\le \alpha^{-1}L(\hf^\sigma)$. 

To show the weak compactness of $L(\hf^\sigma)$, we first approximate the collision kernel $A$ by 
\begin{equation}
\label{Ak}
A_k(v)\defeq A(v)\mathbb{1}_{\{|v|\le k\}} \in L^\infty\cap L^1(\R^d)   
\end{equation}
for $k\in \N_+$. We first show the weak compactness of 
\begin{equation*}
    L_k(\hf^\sigma)(x,v)\defeq \int_{\R^d}\hf^\sigma(x,v_*)A_k(v-v_*)\dd v_*.
\end{equation*}
We verify the conditions in Theorem~\ref{thm:DP}.
\begin{enumerate}
       \item The uniform bound of $\|L_k(\hf^\sigma)\|_{L^1(\Do)}$ is given by
\begin{equation*}
    \int_{\Do}L_k(\hf^\sigma)\dd x\dd v=\|A_k\|_{L^1(\R^d)}\|f_0\|_{L^1(\Do)}.
\end{equation*}
         \item Let $\Phi(z)=z(\log z)^+$ and $a_k=\|A_k\|_{L^1(\R^d)}$. Then we have
\begin{equation*}
    \Phi(z)\le a_k\Phi(\frac{z}{a_k})+ z\log a_k.
\end{equation*}
By using the convexity of $\Phi$ and Jensen's inequality, we have
\begin{equation}
    \label{bdd:Phi:Lf}
    \begin{aligned}
    &\int_{\Do} \Phi(L_k(\hf^\sigma))\dd x\dd v\\
    \le&{} a_k\int_{\Do}\Phi({L_k(\hf^\sigma)}/{a_k})\dd x\dd v+|\log a_k|\int_{\Do}L_k(\hf^\sigma)\dd x\dd v\\
\le&{}  \|L_k(\Phi(\hf^\sigma))\|_{L^1(\Do)}+a_k|\log a_k|\|f^\sigma\|_{L^1(\Do)}\\
\le&{}  a_k\|\Phi(\hf^\sigma)\|_{L^1(\Do)}+a_k|\log a_k|\|f^\sigma\|_{L^1(\Do)}\\
\le&{} a_k\|f^\sigma\log f^\sigma\|_{L^1(\Do)}+a_k|\log a_k|\|f_0\|_{L^1(\Do)},
\end{aligned}
\end{equation}
where we use the inequality \eqref{ineq:Phi:hf} for the last inequality. The right-hand side of \eqref{bdd:Phi:Lf} are uniformly bounded as a consequence of  Proposition~\ref{prop:moment:ent}.
         
         \item We only need to show the smallness of 
         \begin{equation*}
             \int_{\R^d\times (B_R^v)^c}L_k(\hf^\sigma)\dd x \dd v\quad\text{and}\quad \int_{(B^x_R)^c\times \R^d}L_k(\hf^\sigma)\dd x \dd v.
         \end{equation*}
        Indeed, we have
         \begin{align*}
        & \int_{\R^d\times (B^v_R)^c} L_k(\hf^\sigma)\dd x\dd v\\
        \le&{} \int_{\R^{3d}} \hf^\sigma (x,v_*)A_k(v-v_*) \mathbb{1}_{\{|v|\ge R\}}\dd x\dd v\dd v_*\\
\le&{}\int_{\R^{3d}} \hf^\sigma (x,v_*)A_k(z) (\mathbb{1}_{\{|v_*|\ge {R}/{2}\}}+\mathbb{1}_{\{|z|\ge {R}/{2}\}})\dd z\dd x\dd v_*\\
         \le&{}\| \hf^\sigma\|_{L^1(\Do)}{\int_{\R^d}A_k(z) \mathbb{1}_{\{|z|\ge {R}/{2}\}}\dd z}\\
         &+\|A_k\|_{L^1(\R^d)}\int_{\Do} \hf^\sigma (x,v) \mathbb{1}_{\{|v|\ge {R}/{2}\}}\dd x\dd v\to0
         \end{align*}
         as $R\to\infty$ uniformly in $\sigma$. Notice that as $R\to+\infty$,   the integrability of $A$ ensures $\|A_k\|_{L^1(B^c_{{R}/{2}})}\to0$, and 
         \begin{align*}
             &\int_{\R^{d}\times (B^v_{{R}/{2}})^c} \hf^\sigma \dd x\dd v\lesssim {R}^{-2}\|f^\sigma\|_{L^1_{0,2}(\Do)}\to0.
         \end{align*}
         As a consequence of \eqref{hf:no-leak:x}, we have 
         \begin{equation*}
             \begin{aligned}
                 \sup_\sigma\int_{(B^x_{R/2})^c\times\R^d}L_k(\hf^\sigma)\le \|A_k\|_{L^1(\R^d)}\sup_\sigma\int_{(B^x_{R/2})^c\times\R^d}\hf^\sigma\to0
             \end{aligned}
         \end{equation*}
         as $R\to\infty$ uniformly in $\sigma$.
\end{enumerate}
We conclude with the weak relative compactness of $L_k(\hf^\sigma)$.

To pass to the limit by letting $k\to\infty$, we first verify that $L(\hf^\sigma)\in L^\infty([0,T];L^1(\R^d\times B^v_R))$. Indeed, we have 
\begin{align*}
\int_{ \R^d\times B^v_R}L(\hf^\sigma)\dd x\dd v &=\int_{\Do}\hf^\sigma(x,v_*)\Big(\int_{B^v_R} A(v-v_*)\dd v\Big)\dd x\dd v_*\\
&\lesssim \int_{\Do}\hf^\sigma\langle v\rangle^\mu\dd x\dd v\le \|f^\sigma\|_{L^1_{0,2}(\Do)}.
\end{align*}

We are left to show that
\begin{equation*}
    L_k(\hf^\sigma)\to L(\hf^\sigma)\quad\text{as} \quad k \to\infty
\end{equation*}
uniformly in $\sigma\in(0,1)$.
Without loss of generality, we assume $k\ge R$, then
\begin{equation}
\label{k:ge:R}
\begin{aligned}
&\|L_k(\hf^\sigma)-L(\hf^\sigma)\|_{L^1(\R^d\times B^v_R)}\\
=&{}\int_{\Do\times B^v_R}\hf^\sigma(x,v_*)A(v-v_*)\mathbb{1}_{\{|v-v_*|\ge k\}}\dd x\dd v_* \dd v\\
\le&{} \int_{\Do}\hf^\sigma(x,v_*)\mathbb{1}_{\{|v_*|\ge k-R\}}(\int_{B^v_R} A(v-v*)\dd v\big)\dd v_*\dd x\\
\lesssim&{}\int_{\R^d\times(B^v_{k-R})^c}f^\sigma(x,v)\langle v\rangle^\mu\dd x\dd v\to 0
\end{aligned}
\end{equation}
as $k\to\infty$ uniformly in $\sigma$, since we have uniformly energy bound \eqref{bdd:L122} and $0\le\mu<1$. 

Hence, we have the weak relative compactness of $L(\hf^\sigma)$ and $Q^{\alpha,-}(f^\sigma,\hf^\sigma)$.     
    
We show the weak compactness of the gain term $Q^{\alpha,+}(f^\sigma,\hf^\sigma)$. We use the following algebraic inequality, see for example \cite{Ark84}
\begin{align}
a\le c b +{\frac{a-b}{\log c}\log\frac{a}{b} }\quad\forall \, 0\le a,\,b\text{ and }c>1.   \label{alg:ineq:abc}
\end{align}
We briefly show \eqref{alg:ineq:abc}, if $a\le cb$, then the inequality holds since $(a-b)\log\frac{a}{b}\ge 0$; If $a>cb$, then $cb+\frac{a-b}{\log c}\log\frac{a}{b}\ge a+(c-1)b\ge a$.

We substitute $(f^\sigma)_*'(f^\sigma)'=f^\sigma(x_*,v_*')f^\sigma(x,v')$ and $f^\sigma_*f^\sigma=f^\sigma(x_*,v_*)f^\sigma(x,v)$ to the above inequality and convoluted by $\kappa^\sigma$ to derive
\begin{equation}
\label{relation:f':f}
\begin{aligned}
    (f^\sigma)'(\hf^\sigma)_*'\le C f^\sigma\hf^\sigma_*+(\log C)^{-1}\Big(((f^\sigma)_*'(f^\sigma)'-f^\sigma_*f^\sigma)\log\frac{(f^\sigma)_*'(f^\sigma)'}{f^\sigma_*f^\sigma}\Big)*_x\kappa^\sigma
        \end{aligned}
\end{equation}
for all constant $C>1$.
 And hence, we have the comparison inequality
 \begin{equation}
 \label{bdd:Q-:Q+}
\begin{aligned}
    &Q^{+}(f^\sigma,\hf^\sigma)\le C Q^{-}(f^\sigma,\hf^\sigma)+\frac{h(f^\sigma)}{\log C}\quad \text{and}\\
    &Q^{\alpha,+}(f^\sigma,\hf^\sigma)\le C Q^{\alpha,-}(f^\sigma,\hf^\sigma)+\frac{h(f^\sigma)}{\log C},
\end{aligned}
\end{equation}
where 
\begin{align*}
h(f^\sigma)\defeq \int_{\Do\times S^{d-1}}\log\frac{(f^\sigma)'_*(f^\sigma)'}{f^\sigma_*f^\sigma} ({(f^\sigma)'_*(f^\sigma)'}-{f^\sigma_*f^\sigma}) \kappa^\sigma B\dd x_*\dd v_*\dd\omega.
\end{align*}
Notice that the uniform bound has been shown in Proposition~\ref{prop:moment:ent}
$$\|h(f^\sigma_t)\|_{L^1([0,T]\times\Do)}=\int_0^TD(f^\sigma_t)\dd t\le C(T).$$

By using the weak compactness of $Q^{\alpha,-}(f^\sigma,\hf^\sigma)$ and choosing  $C$ large enough in  \eqref{bdd:Q-:Q+}, we conclude the weak compactness of $Q^{\alpha,+}(f^\sigma,\hf^\sigma)$.

The weak compactness of $\{\hQ^{\alpha,-}(f^{\sigma},\hf^\sigma)\}$ follows from the weak compactness of $\{Q^{\alpha,-}(f^{\sigma},\hf^\sigma)\}$ by using of \eqref{ineq:Phi:hf} and\eqref{id:h}. Similar to \eqref{bdd:Q-:Q+}, we have the following comparison inequality
    \begin{equation*}
        {\hQ^{\alpha,+}(f^\sigma,\hf^\sigma)}\le C {\hQ^{\alpha,-}(f^\sigma,\hf^\sigma)}+\frac{h(f^\sigma)*_x\kappa^\sigma}{\log C}.
    \end{equation*}
   Since $\|h(f^\sigma)*_x\kappa^\sigma\|_{L^1([0,T]\times\Do)}=\|h(f^\sigma)\|_{L^1([0,T]\times\Do)}$ are uniformly bounded, we have the weak compactness of $\{{\hQ^{\alpha,+}(f^\sigma,\hf^\sigma)}\}$ by choosing the constant $C$ large enough.

\end{proof}
\begin{corollary}
\label{coro:weak-cpt:quo}
The sequence $\{\frac{Q^{\pm}(f^\sigma,\hf^\sigma)}{1+L(\hf^\sigma)}\}$ is weakly relatively  compact in $L^1([0,T]\times\Do)$.
\end{corollary}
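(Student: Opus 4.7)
The key observation is that the denominator $1+L(\hf^\sigma)$ tames the loss term directly. Since $Q^{-}(f^\sigma,\hf^\sigma)=f^\sigma L(\hf^\sigma)$, we get the pointwise bound
\begin{equation*}
0\le \frac{Q^{-}(f^\sigma,\hf^\sigma)}{1+L(\hf^\sigma)}=\frac{f^\sigma L(\hf^\sigma)}{1+L(\hf^\sigma)}\le f^\sigma.
\end{equation*}
Lemma~\ref{lemma:f} gives weak relative compactness of $\{f^\sigma\}$ in $L^1([0,T]\times\Do)$, and all three Dunford--Pettis conditions (uniform $L^1$ bound, equi-integrability, and tightness at infinity) are inherited by nonnegative sequences dominated pointwise by a weakly compact one. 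So the loss piece is immediate.

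For the gain piece, I would combine this bound with the comparison inequality \eqref{bdd:Q-:Q+} already established in the proof of Lemma~\ref{lemma:Q}. Dividing by $1+L(\hf^\sigma)\ge 1$ and using the loss bound yields, for any $C>1$,
\begin{equation*}
0\le \frac{Q^{+}(f^\sigma,\hf^\sigma)}{1+L(\hf^\sigma)}\le C\,\frac{Q^{-}(f^\sigma,\hf^\sigma)}{1+L(\hf^\sigma)}+\frac{h(f^\sigma)}{\log C}\le C f^\sigma+\frac{h(f^\sigma)}{\log C}.
\end{equation*}
By Proposition~\ref{prop:moment:ent}, both $\|f^\sigma\|_{L^1([0,T]\times\Do)}$ and $\|h(f^\sigma)\|_{L^1([0,T]\times\Do)}$ are bounded uniformly in $\sigma$, which gives the first Dunford--Pettis condition.

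For equi-integrability I would argue in two steps: given $\varepsilon>0$, first choose $C$ large enough that $\|h(f^\sigma)\|_{L^1}/\log C<\varepsilon/2$ uniformly in $\sigma$; then, using the equi-integrability of $\{f^\sigma\}$ proved in Lemma~\ref{lemma:f}, select $\delta>0$ such that $|A|<\delta$ forces $C\int_A f^\sigma <\varepsilon/2$. The same splitting handles tightness at infinity: with $C$ fixed from the first step, the tightness of $\{f^\sigma\}$ on balls $B^{x,v}_{R}$ (see \eqref{int:f:vani}) controls the tail of $Cf^\sigma$, while the $L^1$ contribution from $h(f^\sigma)/\log C$ is already globally $\varepsilon/2$-small.

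I do not foresee a substantive obstacle here: the statement is essentially a dominated-convergence-style corollary of Lemmas~\ref{lemma:f}--\ref{lemma:Q}, and the only thing one must notice is that the $1+L(\hf^\sigma)$ in the denominator plays precisely the right role to absorb $L(\hf^\sigma)$ in the loss term and produce a domination by $f^\sigma$ itself (in contrast to $1+\alpha f^\sigma$, which cannot).
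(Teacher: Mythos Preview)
Your proposal is correct and follows exactly the paper's approach: the loss term is dominated pointwise by $f^\sigma$ (whose weak compactness comes from Lemma~\ref{lemma:f}), and the gain term is handled via the comparison inequality \eqref{bdd:Q-:Q+} together with the uniform $L^1$ bound on $h(f^\sigma)$ from Proposition~\ref{prop:moment:ent}. Your write-up simply makes explicit the $\varepsilon/2$ Dunford--Pettis splitting that the paper leaves implicit when it says the comparison inequality ``ensures the weak compactness of the gain term.''
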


\begin{proof}
Since $0\le \frac{Q^{-}(f^\sigma,\hf^\sigma)}{1+L(\hf^\sigma)}\le f^\sigma$, the weak compactness of $f^\sigma$ ensures the weak compactness of the loss term. The comparison inequality \eqref{bdd:Q-:Q+} ensures the 
    weak compactness of the gain term $\frac{Q^{+}(f^\sigma,\hf^\sigma)}{1+L(\hf^\sigma)}$.
\end{proof}

\section{Applications of velocity averaging}
\label{sec:weak}

In Section \ref{sec:weak-cmpt}, we have shown the weak relative compactness of 
\begin{align*}
\{f^\sigma\},\quad\{\hf^\sigma\},\quad\{g^{\sigma,\alpha}\}\quad\text{and}\quad\{\hg^{\sigma,\alpha}\}.
\end{align*}
Here, we fix subsequences such that
\begin{equation}
\label{conv:sec-3}
    f^\sigma\rightharpoonup f,\quad \hf^\sigma\rightharpoonup\hf, \quad g^{\sigma,\alpha}\rightharpoonup g^\alpha\quad \text{and}\quad  \hg^{\sigma,\alpha}\rightharpoonup \hg^\alpha.
\end{equation}
as $\sigma\to 0$ in $L^1([0,T]\times\Do)$.

In this section, we will show the following convergence results.
\begin{theorem}
    \label{thm:conv}
    Let $B$ be as in  Assumption~\ref{CK}.  
    \begin{enumerate}
        \item For any $\varphi\in L^\infty([0,T]\times\Do)$, as $\sigma\to0$, we have
        \begin{equation*}
          \int_{\R^d}\hf^{\sigma}\varphi \dd v\to \int_{\R^d}\hf\varphi\dd v\quad\text{and}\quad \int_{\R^d}f^{\sigma}\varphi\dd v\to \int_{\R^d}f\varphi\dd v
        \end{equation*}
        in $L^1([0,T]\times\R^d)$.

        \item For any $R\ge0$, as $\sigma\to0$, we have
        \begin{equation*} L(\hf^\sigma)\defeq\hf^\sigma*_v A \to L(\hf)\defeq \hf*_v A
        \end{equation*}
        in $L^1([0,T]\times \R^d\times B^v_R)$.

         \item We have $\hf$ and $f\in C([0,T];L^1(\Do))$.

\item For any $\alpha>0$, as $\sigma\to0$, we have
        \begin{align*}
        &\frac{Q^\pm(f^\sigma,\hf^\sigma)}{1+\alpha L(\hf^\sigma)}\rightharpoonup\frac{Q^\pm(f,\hf)}{1+\alpha L(\hf)}\quad\text{in}\quad L^1([0,T]\times\Do)\quad\text{and}\\
        &\frac{f^\sigma \hf^\sigma B}{1+\alpha L(\hf^\sigma)}  \rightharpoonup\frac{f\hf B}{1+\alpha L(\hf)} \quad\text{in}\quad
        L^1([0,T]\times \R^{3d}\times S^{d-1}).
        \end{align*}
       
    \end{enumerate}
\end{theorem}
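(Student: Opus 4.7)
The plan is to establish parts (1)--(4) in turn by combining the DiPerna--Lions velocity averaging lemma applied to the renormalised equations~\eqref{FBE:renorm}--\eqref{FBE:tilde}, the weak compactness Lemmas~\ref{lemma:f}--\ref{lemma:Q}, and the entropy and moment bounds of Proposition~\ref{prop:moment:ent}.

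For~\textbf{(1)}, I start from~\eqref{FBE:renorm} with $\alpha\in(0,1)$ fixed. Both $\{g^{\sigma,\alpha}\}$ and $\{Q^\alpha(f^\sigma,\hf^\sigma)\}$ are uniformly weakly compact in $L^1$ by Lemmas~\ref{lemma:f}--\ref{lemma:Q}, so velocity averaging gives that $\{\int g^{\sigma,\alpha}\varphi\,\dd v\}$ is strongly relatively compact in $L^1([0,T]\times\R^d)$ for every $\varphi\in L^\infty$. To pass from $g^{\sigma,\alpha}$ to $f^\sigma$ I use the elementary pointwise bound $0\le f^\sigma-g^{\sigma,\alpha}\le\min\bigl(\alpha(f^\sigma)^2/2,\,f^\sigma\bigr)$: splitting at the level $\{f^\sigma\le M\}$ yields $\|f^\sigma-g^{\sigma,\alpha}\|_{L^\infty_t L^1_{x,v}}\le\tfrac12\alpha M\|f_0\|_{L^1}+\sup_{\sigma,t}\int_{\{f^\sigma_t>M\}}f^\sigma_t$, and the second term vanishes as $M\to\infty$ uniformly in $\sigma$ by the entropy-based equi-integrability~\eqref{equi-int:f-sigma}; choosing $M=M(\alpha)\to\infty$ with $\alpha M(\alpha)\to 0$ gives $\|f^\sigma-g^{\sigma,\alpha}\|_{L^\infty_t L^1_{x,v}}\to 0$ as $\alpha\to 0$ uniformly in $\sigma$. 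A diagonal argument in $(\alpha,\sigma)$ then transfers strong compactness of velocity averages from $g^{\sigma,\alpha}$ to $f^\sigma$, and~\eqref{conv:sec-3} identifies the limit. The case of $\hf^\sigma$ is analogous via $\hg^{\sigma,\alpha}$ and~\eqref{FBE:tilde}.

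For~\textbf{(2)}, I truncate $A$ by $A_k$ as in~\eqref{Ak}; for each $v\in B_R^v$ the function $v_*\mapsto A_k(v-v_*)$ belongs to $L^\infty$, so~\textbf{(1)} applied to $\hf^\sigma$ gives strong $L^1_{t,x}$-convergence of $L_k(\hf^\sigma)(\cdot,v)\to L_k(\hf)(\cdot,v)$; the uniform bound $\|L_k(\hf^\sigma)(\cdot,v)\|_{L^1_{t,x}}\le T\|A_k\|_\infty\|f_0\|_{L^1}$ and dominated convergence over the compact range $v\in B_R^v$ upgrade this to strong convergence in $L^1([0,T]\times\R^d\times B_R^v)$, and the uniform estimate~\eqref{k:ge:R} lets one pass to $k\to\infty$ uniformly in $\sigma$. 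For~\textbf{(3)}, the renormalised equations with uniformly $L^1_\text{loc}$-bounded right-hand sides yield a uniform-in-$\sigma$ modulus of time equi-continuity for $\langle g^{\sigma,\alpha}_t,\varphi\rangle$ and $\langle\hg^{\sigma,\alpha}_t,\varphi\rangle$ against $\varphi\in C_c^\infty$; combined with the tightness from~\eqref{int:f:vani} and~\eqref{hf:no-leak:x}, an Arzelà--Ascoli argument yields $L^1_\text{weak}$-continuous limits, and the uniform approximation $\|f^\sigma-g^{\sigma,\alpha}\|_{L^\infty_t L^1}\to 0$ from~\textbf{(1)} upgrades these to $f,\hf\in C([0,T];L^1(\Do))$.

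The main obstacle is~\textbf{(4)}, since one must pass to the limit in quadratic terms with only weak convergence of the factors; the renormalising factor $(1+\alpha L(\hf^\sigma))^{-1}\in[0,1]$ supplies the $L^\infty$ control needed to turn a weak$\times$weak product into a weak$\times$(bounded a.e.-convergent) product. For the loss term $\frac{Q^-(f^\sigma,\hf^\sigma)}{1+\alpha L(\hf^\sigma)}=\frac{f^\sigma L(\hf^\sigma)}{1+\alpha L(\hf^\sigma)}$, part~\textbf{(2)} yields $L(\hf^\sigma)\to L(\hf)$ strongly in $L^1_\text{loc}$ and hence a.e.\ along a subsequence, so $L(\hf^\sigma)/(1+\alpha L(\hf^\sigma))\in[0,\alpha^{-1}]$ is $L^\infty$-bounded and a.e.-convergent; multiplying by $f^\sigma\rightharpoonup f$ gives the weak $L^1$-limit. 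For the quadratic term, tested against $\psi\in L^\infty$, I rewrite
\begin{equation*}
\int\frac{f^\sigma\hf^\sigma_* B\,\psi}{1+\alpha L(\hf^\sigma)}\dd t\dd x\dd v\dd v_*\dd\omega=\int f^\sigma(x,v)\,\frac{\Phi^\sigma(x,v)}{1+\alpha L(\hf^\sigma)(x,v)}\,\dd t\dd x\dd v,
\end{equation*}
with $\Phi^\sigma(x,v):=\int B(v-v_*,\omega)\hf^\sigma(x,v_*)\psi\,\dd v_*\dd\omega$. For each $v$ one has $\Phi^\sigma(\cdot,v)\to\Phi(\cdot,v)$ strongly in $L^1_{t,x}$ by part~\textbf{(1)} applied to $\hf^\sigma$ (truncating $B$ at $\{|v-v_*|\le k\}$ and using~\eqref{k:ge:R} with the $L^1_{2,2}$-moment bound to control the tail); combined with $(1+\alpha L(\hf^\sigma))^{-1}\in[0,1]$ being $L^\infty$-bounded and a.e.-convergent, and after a $\langle v\rangle$-moment truncation to handle the unbounded $v$-range, this gives convergence of the bracketed factor multiplying the weakly convergent $f^\sigma$. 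For the gain term the pre-/post-collisional involution $(v,v_*)\leftrightarrow(v',v'_*)$ with $\omega$ fixed (unit Jacobian, $B$ invariant) yields
\begin{equation*}
\int\frac{Q^+(f^\sigma,\hf^\sigma)\psi}{1+\alpha L(\hf^\sigma)}\,\dd x\dd v=\int B\,f^\sigma\,\hf^\sigma_*\,\frac{\psi(x,v')}{1+\alpha L(\hf^\sigma)(x,v')}\,\dd x\dd v\dd v_*\dd\omega,
\end{equation*}
reducing the gain term to a quadratic term with modified test factor $\psi(x,v')/(1+\alpha L(\hf^\sigma)(x,v'))$, which is bounded by $\|\psi\|_\infty$ and a.e.-convergent, so the same scheme closes the argument.
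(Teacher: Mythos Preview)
Your arguments for parts (1)--(3), and for the loss term and the bilinear integrand $f^\sigma\hf^\sigma_*B/(1+\alpha L(\hf^\sigma))$ in part (4), are correct and follow the paper's route: velocity averaging applied to the renormalised equations~\eqref{FBE:renorm}--\eqref{FBE:tilde} with $\alpha$ fixed, followed by the uniform approximation $\sup_{t,\sigma}\|f^\sigma-g^{\sigma,\alpha}\|_{L^1}\to0$ as $\alpha\to0$. For (2) the paper applies the parametric averaging Theorem~\ref{thm:ave}(ii) with $\varphi=A_k(v-v_*)$ in one step, whereas you invoke part~(1) pointwise in $v$ followed by bounded convergence on $B_R^v$; both work along the fixed subsequence.

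There is, however, a genuine gap in your treatment of the gain term in part (4). After the collisional involution you correctly reach
\[
\int\frac{Q^+(f^\sigma,\hf^\sigma)}{1+\alpha L(\hf^\sigma)}\,\psi
=\int B\,f^\sigma\,\hf^\sigma_*\;\frac{\psi(x,v')}{1+\alpha L(\hf^\sigma)(x,v')}\,\dd x\dd v\dd v_*\dd\omega,
\]
and propose to rerun the quadratic scheme with the bounded, a.e.-convergent factor $\tilde\psi^\sigma=\psi(v')/(1+\alpha L(\hf^\sigma)(v'))$. But your quadratic scheme worked precisely because the renormalising denominator sat at the \emph{same} velocity $v$ as the $f^\sigma$ factor: one had $\Phi^\sigma(x,v)=\int B\hf^\sigma_*\psi\le\|\psi\|_\infty L(\hf^\sigma)(x,v)$, so that $\Phi^\sigma/(1+\alpha L(\hf^\sigma))$ was a \emph{uniformly bounded}, a.e.-convergent multiplier against the weakly convergent $f^\sigma$. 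After the involution this cancellation is lost: the denominator now sits at $v'$, and the corresponding factor $\Psi^\sigma(x,v)=\int B\hf^\sigma_*\tilde\psi^\sigma\,\dd v_*\dd\omega$ is only controlled by $\|\psi\|_\infty L(\hf^\sigma)(x,v)$ with \emph{no} renormalising denominator at $v$. You are therefore pairing the weakly convergent $f^\sigma$ with an unbounded sequence, and the product does not converge by the mechanism you describe; nor can one simply reinsert a factor $(1+\alpha L(\hf^\sigma)(v))/(1+\alpha L(\hf^\sigma)(v))$, since the resulting test factor $(1+\alpha L(\hf^\sigma)(v))\tilde\psi^\sigma$ is again unbounded.

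The paper does not attempt the change of variables here. It instead invokes the entropy-dissipation comparison~\eqref{bdd:Q-:Q+}, $Q^+\le CQ^-+(\log C)^{-1}h(f^\sigma)$ with $\|h(f^\sigma)\|_{L^1}$ uniformly bounded, to transfer control from the already-identified loss term to the gain term; the full identification of the $Q^+$ limit is only completed once the smoothing effect of $Q^+$ (Theorem~\ref{thm:conv:measure}) yields convergence in measure on $[0,T]\times\R^d\times B_R^v$.
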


We first recall the following averaging lemma.
\begin{theorem}[Velocity averages, \cite{DL89b,AC90}]
\label{thm:ave}
Let the pair $(h^n,H^n)$ be a weak solution of
\begin{equation*}
    \d_t h^n+v\cdot \nabla_x h^n=H^n\quad n\in\N_+,
\end{equation*}
where $\{h^n\}$ is weakly relative compact in $L^1([0,T]\times\Do)$, and for each compact set $K\subset (0,T)\times\R^d$,  $\{H^n\}$ is weakly relative compact set in $L^1(K)$. 
\begin{enumerate}[(i)]
   
    \item   For all  $\varphi\in L^\infty([0,T]\times\Do)$,$\int_{\R^d}h^n\varphi\dd v$ is relatively compact set in $L^1([0,T]\times \R^d)$.
    \item Let $(E,\mu)$ be an arbitrary measure space. For all $\varphi\in L^\infty([0,T]\times\Do;L^1(E)) $, we have $\int_{\R^d}h^n\varphi\dd v$ belong to a relatively compact set in $L^1([0,T]\times\R^d\times E)$.
\end{enumerate}
    
\end{theorem}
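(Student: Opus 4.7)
\emph{Step 1: Truncation and renormalisation.} My plan is to establish (i) by a truncation reduction to the classical $L^2$ velocity averaging lemma, and then to deduce (ii) by a Fubini-type argument. The Dunford--Pettis theorem applied to the weak $L^1$-compactness of $\{h^n\}$ and $\{H^n\}$ supplies equi-integrability and tightness in $(t,x,v)$. Given $\varepsilon>0$, I choose $M>0$, a compact velocity set $K\subset\R^d$, and a smooth Lipschitz truncation $\beta_M:\R\to\R$ with $\|\beta'_M\|_\infty\le 1$ and $\beta_M(z)=z$ for $|z|\le M$, so that
\begin{equation*}
\|h^n-\beta_M(h^n)\chi_K(v)\|_{L^1([0,T]\times\Do)}<\varepsilon
\end{equation*}
uniformly in $n$, where $\chi_K$ is a smooth cutoff supported in $K$. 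Writing $\tilde h^n\defeq\beta_M(h^n)\chi_K(v)$, a renormalisation of the transport equation gives
\begin{equation*}
(\partial_t+v\cdot\nabla_x)\tilde h^n=\tilde H^n,\qquad \tilde H^n=\beta'_M(h^n)H^n\chi_K(v)+\beta_M(h^n)\,v\cdot\nabla_x\chi_K(v).
\end{equation*}
The bound $|\beta'_M|\le 1$ ensures $\{\tilde H^n\}$ is weakly relatively compact in $L^1_\loc$, while $\tilde h^n$ is bounded in $L^\infty$ with compact $v$-support, hence in $L^2_\loc([0,T]\times\Do)$.

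\emph{Step 2: $L^2$-averaging and passage to $L^1$.} To the truncated pair I apply the classical $L^2$ velocity averaging lemma: the Fourier transform in $(t,x)$ converts the transport equation into $(i\tau+iv\cdot\xi)\widehat{\tilde h^n}(\tau,\xi,v)=\widehat{\tilde H^n}(\tau,\xi,v)$, and a splitting of the $v$-integration defining $\int\tilde h^n\varphi\dd v$ into the resonant region $\{|\tau+v\cdot\xi|\le\delta|(\tau,\xi)|\}$ and its complement yields a fractional Sobolev gain of order $s>0$ in $(t,x)$, uniformly in $n$. Rellich's embedding together with tightness in $(t,x)$ inherited from weak $L^1$-compactness upgrades this to strong $L^2_\loc$, and hence $L^1_\loc$, compactness of $\{\int\tilde h^n\varphi\dd v\}$ on $[0,T]\times\R^d$. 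The uniform estimate $\|\int(h^n-\tilde h^n)\varphi\dd v\|_{L^1([0,T]\times\R^d)}\le\|\varphi\|_\infty\varepsilon$ and a diagonal argument in $\varepsilon\to 0$ then promote the compactness to $\{\int h^n\varphi\dd v\}$ itself, proving (i).

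\emph{Step 3: Part (ii) via Fubini.} For $\varphi\in L^\infty([0,T]\times\Do;L^1(E,\mu))$ and $\psi\in L^\infty([0,T]\times\R^d\times E)$ of compact support, set $\Phi_\psi(t,x,v)\defeq\int_E\varphi(t,x,v;e)\psi(t,x,e)\dd\mu(e)\in L^\infty([0,T]\times\Do)$, with $\|\Phi_\psi\|_\infty\le\|\varphi\|_{L^\infty(L^1(E))}\|\psi\|_\infty$. The scalar pairing of $\int h^n\varphi\dd v$ with $\psi$ equals $\int h^n\Phi_\psi\dd v$, whose $L^1([0,T]\times\R^d)$-compactness is covered by (i). Combined with the uniform $L^1$-bound $\|\int h^n\varphi\dd v\|_{L^1([0,T]\times\R^d\times E)}\le\|\varphi\|_{L^\infty(L^1(E))}\|h^n\|_{L^1}$ and a density argument in $\psi$, Dunford--Pettis on the product measure space yields the required compactness on $[0,T]\times\R^d\times E$.

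The main obstacle is the preservation of the transport-equation hypothesis through the truncation--renormalisation scheme of Step~1, which bridges the purely $L^1$-framework of the theorem to the $L^2$-regime where the Fourier-side averaging estimate is quantitative. The bound $|\beta'_M|\le 1$ is what keeps $\tilde H^n$ weakly compact in $L^1$, and the commutator term $\beta_M(h^n)\,v\cdot\nabla_x\chi_K$ must be absorbed into the $\varepsilon$-error via tightness in $v$ and the uniform $L^\infty$-bound on $\beta_M$.
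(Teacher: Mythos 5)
The paper does not prove Theorem~\ref{thm:ave}: it is quoted verbatim as a known result from \cite{DL89b,AC90} (and, for the refined version, \cite{DLM91,GLPS88}). So there is no in-paper proof to compare against; what follows is an assessment of your argument on its own.

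There are two genuine gaps. First, in Step~2 you invoke the classical $L^2$ velocity-averaging lemma for the truncated pair $(\tilde h^n,\tilde H^n)$, but that lemma requires the source to be bounded in $L^2$, whereas $\tilde H^n = \beta'_M(h^n)H^n\chi_K(v)$ is, by hypothesis, only weakly relatively compact in $L^1_\loc$. Truncating $h^n$ does not truncate $H^n$. The standard fix is a second Dunford--Pettis decomposition of the source, $H^n = H^n_R + r^n_R$ with $|H^n_R|\le R$ and $\sup_n\|r^n_R\|_{L^1(K)}$ small, together with a Duhamel splitting $\tilde h^n = \tilde h^n_R + \hat h^n_R$ along the free-transport characteristics: the piece with bounded source enjoys the $H^{1/2}_{t,x}$ gain, and the remainder contributes an $\varepsilon$-small error in $L^1$. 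Without this additional decomposition the Fourier-side resonance splitting does not give a uniform Sobolev bound. (As a minor side remark: the commutator term $\beta_M(h^n)\,v\cdot\nabla_x\chi_K(v)$ you carry along in Step~1 is identically zero, since $\chi_K$ depends only on $v$ and $\partial_t+v\cdot\nabla_x$ annihilates any function of $v$ alone.)

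Second, Step~3 does not establish (ii). Pairing $\int_{\R^d}h^n\varphi\,\dd v$ against test functions $\psi$ and combining with Dunford--Pettis yields weak $L^1$-compactness on $[0,T]\times\R^d\times E$, but the theorem asserts strong $L^1$-compactness; weak-$*$ convergence of pairings plus equi-integrability cannot be upgraded to norm compactness (think of $\sin(nx)$ on $[0,1]$, which has vanishing pairings and a uniform bound but no strong $L^1$-limit). The correct route is the opposite: approximate $\varphi$ by a finite sum $\varphi^N = \sum_{j=1}^N \varphi_j(t,x,v)\,\mathbb{1}_{A_j}(e)$ with the $A_j$ a finite partition of a set of finite $\mu$-measure, apply part~(i) to each $\varphi_j$ to get strong compactness of $\int_{\R^d}h^n\varphi_j\,\dd v$ in $L^1([0,T]\times\R^d)$, and control the tail via $\|\int_{\R^d}h^n(\varphi-\varphi^N)\,\dd v\|_{L^1([0,T]\times\R^d\times E)}\le \sup_n\|h^n\|_{L^1}\,\|\varphi-\varphi^N\|_{L^\infty([0,T]\times\Do;L^1(E))}$ together with the equi-integrability and tightness of $\{h^n\}$ to make the approximation uniform in $n$.

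The skeleton of your argument (truncate, renormalise along free transport, invoke an $L^2$ gain, diagonalise) is the right one and matches how this lemma is proved in the cited references; the missing ingredients are the second truncation of the source in Step~2 and the replacement of the duality/Dunford--Pettis argument in Step~3 by a finite-rank approximation of $\varphi$ in the $E$-variable.
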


One can also apply a refined averaging lemma, where the weak compactness of $\{H_n\}$ can be weakened to $H_n\in L^1_\loc(\Do)$, see \cite{DLM91,GLPS88}.

We recall the equation \eqref{FBE:tilde}
\begin{equation}
 \label{TP:g-sigma-alpha}   (\d_t+v\cdot\nabla_x)\hg^{\sigma,\alpha}=\hQ^{\alpha}(f^\sigma,\hf^\sigma).
\end{equation}
In Lemma~\ref{lemma:f} and Lemma~\ref{lemma:Q}, we have shown that for any fixed $\alpha\in(0,1)$, $\{\hg^{\sigma,\alpha}\}$ and $\{\hQ^{\alpha,\pm}(f^\sigma,\hf^\sigma)\}$ are weakly relatively compact in $L^1([0,T]\times \Do)$ and $L^1([0,T]\times \R^d\times B^v_R)$. Let $\hg^\alpha$ be the weak limit of $g^{\sigma,\alpha}$ in $L^1([0,T]\times\Do)$. Then Theorem~\ref{thm:ave}-$(i)$ imlies that 
\begin{equation}
\label{conv:fix:alpha}
\int_{\R^d}\hg^{\sigma,\alpha}\varphi\dd v\to \int_{\R^d}\hg^\alpha\varphi\dd v\quad\text{in}\quad L^1([0,T]\times\R^d)
\end{equation}
as $\sigma\to0$
for all $\varphi\in L^\infty([0,T]\times \Do)$.

We recall that $\hf^{\sigma}(x,v)\defeq f^\sigma*_x\kappa^\sigma$ and $\hf^\sigma\rightharpoonup \hf$ in $L^1$.
Then based on the convergence \eqref{conv:fix:alpha}, we show Theorem~\ref{thm:conv}-$(1)$.
\begin{lemma}\label{ave:conv:sigma}
As $\sigma\to0$, we have 
\begin{align*}
\int_{\R^d}\hf^{\sigma}\varphi \dd v\to \int_{\R^d}\hf\varphi\dd v\quad\text{and}\quad \int_{\R^d}f^{\sigma}\varphi\dd v\to \int_{\R^d}f\varphi\dd v  
\end{align*}
in $L^1([0,T]\times \R^d)$ for all $\varphi\in L^\infty([0,T]\times \Do)$.
\end{lemma}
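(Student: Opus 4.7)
The plan is to reduce both claims to velocity averaging applied to the renormalised equation \eqref{FBE:renorm}, combined with a uniform-in-$\sigma$ approximation of $f^\sigma$ by its renormalisation $g^{\sigma,\alpha}$. By the equivalence of weak and renormalised solutions (Remark following Definition~\ref{def:sols}), the equation \eqref{FBE:renorm} holds for $f^\sigma$. The weak $L^1$-compactness of $\{g^{\sigma,\alpha}\}$ (Lemma~\ref{lemma:f}) together with the weak $L^1([0,T]\times\R^d\times B^v_R)$-compactness of $\{Q^\alpha(f^\sigma,\hf^\sigma)\}$ (Lemma~\ref{lemma:Q}) permits the application of Theorem~\ref{thm:ave}(i), yielding, for each fixed $\alpha\in(0,1)$ and every $\varphi\in L^\infty([0,T]\times\Do)$,
\[
\int_{\R^d} g^{\sigma,\alpha}\varphi\dd v \to \int_{\R^d} g^\alpha\varphi\dd v \quad \text{in } L^1([0,T]\times\R^d) \text{ as } \sigma\to 0.
\]
The analogous convergence for $\hg^{\sigma,\alpha}$ is exactly \eqref{conv:fix:alpha}.

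The central step is the uniform-in-$\sigma$ approximation. From $0\le y-\log(1+y)\le y^2/2$ for $y\ge 0$, I obtain the pointwise sandwich
\[
0\le f^\sigma - g^{\sigma,\alpha} \le \min\bigl\{\tfrac{\alpha}{2}(f^\sigma)^2,\; f^\sigma\bigr\}.
\]
Splitting $[0,T]\times\Do$ at the threshold $\{f^\sigma\le N\}$, the high-density region contributes at most $\sup_\sigma\int_0^T\int_{\{f^\sigma_t>N\}} f^\sigma_t\dd x\dd v\dd t$, which tends to $0$ as $N\to\infty$ uniformly in $\sigma$ thanks to the equi-integrability following from \eqref{bdd:H-D}; the low-density region contributes at most $\tfrac{\alpha N T}{2}\|f_0\|_{L^1(\Do)}$. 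This yields $\|f^\sigma-g^{\sigma,\alpha}\|_{L^1([0,T]\times\Do)}\le \omega(\alpha)$ for some modulus $\omega(\alpha)\to 0$ independent of $\sigma$. Convolving in $x$ with $\kappa^\sigma$, whose $L^1$-norm equals one, gives the same bound for $\|\hf^\sigma-\hg^{\sigma,\alpha}\|_{L^1}$.

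Since the non-negative quantity $f^\sigma-g^{\sigma,\alpha}$ converges weakly to $f-g^\alpha$ in $L^1$, weak lower semicontinuity of the $L^1$-norm gives $\|f-g^\alpha\|_{L^1}\le\omega(\alpha)$, and analogously $\|\hf-\hg^\alpha\|_{L^1}\le\omega(\alpha)$. The triangle-inequality decomposition
\[
\int_{\R^d}(f^\sigma-f)\varphi\dd v = \int_{\R^d}(f^\sigma-g^{\sigma,\alpha})\varphi\dd v + \int_{\R^d}(g^{\sigma,\alpha}-g^\alpha)\varphi\dd v + \int_{\R^d}(g^\alpha-f)\varphi\dd v
\]
and a standard diagonal argument (first pick $\alpha$ small enough to control the first and third terms by $\|\varphi\|_{L^\infty}\omega(\alpha)$ in $L^1([0,T]\times\R^d)$, then let $\sigma\to 0$ in the middle term via the averaging statement above) yields the convergence for $f^\sigma$. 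The argument for $\hf^\sigma$ is identical, replacing $g^{\sigma,\alpha}$ by $\hg^{\sigma,\alpha}$ and $g^\alpha$ by $\hg^\alpha$, invoking \eqref{conv:fix:alpha} in place of the averaging output.

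The principal obstacle is the uniform-in-$\sigma$ smallness of $f^\sigma-g^{\sigma,\alpha}$ as $\alpha\to 0$, which rests crucially on the equi-integrability furnished by the entropy bound \eqref{bdd:H-D}; the remaining ingredients are direct consequences of the averaging lemma and the weak compactness results of Section~\ref{sec:weak-cmpt}.
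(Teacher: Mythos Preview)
Your proposal is correct and follows essentially the same approach as the paper: both reduce to velocity averaging applied to the renormalised equation, combined with a uniform-in-$\sigma$ approximation $\|f^\sigma-g^{\sigma,\alpha}\|_{L^1}\to 0$ as $\alpha\to 0$ established via equi-integrability from the entropy bound, and then pass through the convolution for the $\hf^\sigma$ version. The only cosmetic differences are your use of the pointwise bound $y-\log(1+y)\le y^2/2$ in place of the paper's $x-\alpha^{-1}\log(1+\alpha x)\le \varepsilon_\alpha(R)+x\mathbb{1}_{\{x\ge R\}}$, and your explicit appeal to weak lower semicontinuity to obtain $\|f-g^\alpha\|_{L^1}\le\omega(\alpha)$, which the paper states more tersely.
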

\begin{proof}
We first show the convergence corresponding to $\hf^\sigma$.
By the triangle inequality, we have
\begin{align*}
&\Big|\int_{\R^d}\hf^{\sigma}\varphi\dd v -\int_{\R^d}\hf\varphi\dd v \Big|\\
&\le \int_{\R^d}|\hf^{\sigma}-\hg^{\sigma,\alpha}|\varphi\dd v+ \Big|\int_{\R^d}(\hg^{\sigma,\alpha}-\hg^\alpha)\varphi\dd v\Big|+ \Big|\int_{\R^d}(\hg^\alpha-\hf)\varphi\dd v\Big|.
\end{align*}
As a consequence of \eqref{conv:fix:alpha}, we only need to show 
  \begin{equation}
  \label{conv:sigma:alpha}
\lim_{\alpha\to0}\sup_{t,\sigma}\|\hf^{\sigma}-\hg^{\sigma,\alpha}\|_{L^1(\Do)}=0.
  \end{equation}
Indeed, we have  $\hf^\sigma\rightharpoonup \hf$ and $\hg^{\sigma,\alpha}\rightharpoonup \hg^\alpha$ in $L^1([0,T]\times\Do)$. The uniform convergence \eqref{conv:sigma:alpha} ensures that $\lim_{\alpha\to0}\|\hg^\alpha-\hf\|_{L^1([0,T]\times\Do)}=0$. 
  
To show the uniform convergence \eqref{conv:sigma:alpha}, we first observe that $x\mapsto \alpha^{-1}\log(1+\alpha x)$ is concave, and for any $R\ge0$,  we have $0\le x-\alpha^{-1}\log(1+\alpha x)\le \varepsilon_\alpha(R)+x\mathbb{1}_{\{x\ge R\}}$, where $\varepsilon_\alpha(R)=R-\alpha^{-1}\log(1+\alpha R)$ and $\varepsilon_\alpha(R)\to0$ as $\alpha\to0$ for any fixed $R\ge 0$. Then we have
\begin{equation*}
\sup_{t,\sigma}\int_{\Do} |\alpha^{-1}\log(1+\alpha f^\sigma)-f^{\sigma}|\dd x\dd v\le \varepsilon_\alpha(R)+\sup_{t,\sigma}\int_{\Do}f^\sigma\mathbb{1}_{\{f^\sigma\ge R\}}\dd x\dd v,
  \end{equation*}
where we use
$\|f^\sigma_t\|_{L^1(\Do)}=\|f_0\|_{L^1(\Do)}$ for all $t\in[0,T]$ and $\sigma>0$. The uniform equi-integrability of $f^\sigma$ \eqref{equi-int:f-sigma} ensures that
\begin{equation*}
\lim_{R\to\infty}\sup_{t,\sigma}\int_{\Do}f^\sigma\mathbb{1}_{\{f^\sigma\ge R\}}\dd x\dd v=0.
  \end{equation*}
  Then we conclude $\|\alpha^{-1}\log(1+\alpha f^\sigma)-f^\sigma\|_{L^1(\Do)}\to0$ uniformly in $t$ and $\sigma$.
Moreover, by definition of convolution, we have 
\begin{align*}
    &\|\hg^{\sigma,\alpha}-\hf^\sigma\|_{L^1(\Do)}\\
    =&{}\|(\alpha^{-1}\log(1+\alpha f^\sigma)-f^{\sigma})*_x\kappa^\sigma\|_{L^1(\Do)}\\
     \le&{} \sup_{t,\sigma}\|\alpha^{-1}\log(1+\alpha f^\sigma)-f^{\sigma}\|_{L^1(\Do)}\|\kappa^\sigma\|_{L^1(\R^d)}\to0
\end{align*}
as $\alpha\to0$ uniformly in $\sigma$.

The convergence \eqref{conv:sigma:alpha} corresponding to $f^\sigma$ follows analogously. We apply Theorem~\ref{thm:ave}-$(i)$ on the equation~\eqref{FBE:renorm} for any fixed $\alpha\in(0,1)$
\begin{equation}
 \label{TP:g-sigma-alpha:f}   (\d_t+v\cdot\nabla_x)g^{\sigma,\alpha}=Q^{\alpha}(f^\sigma,\hf^\sigma)
\end{equation}
to derive
\begin{align*}
   \lim_{\sigma\to0} \int_{\R^d}g^{\sigma,\alpha}\varphi\dd v= \int_{\R^d}g^\alpha\varphi\dd v\quad\forall \varphi \in L^\infty([0,T]\times\Do)
\end{align*}
in $L^1([0,T]\times\R^d)$.
Similar to \eqref{conv:sigma:alpha}, we have
\begin{equation*}
  \label{conv:sigma:alpha:f}
\lim_{\alpha\to0}\sup_{t,\sigma}\|f^{\sigma}-g^{\sigma,\alpha}\|_{L^1(\Do)}=0.
  \end{equation*}   
\end{proof}

We show Theorem \ref{thm:conv}-$(2)$ by applying Theorem~\ref{thm:ave}-$(ii)$ with $\varphi=A(v-v_*)$ to the equation \eqref{TP:g-sigma-alpha}
\begin{equation}
 \label{TP:g-sigma-alpha-2}   (\d_t+v\cdot\nabla_x)\hg^{\sigma,\alpha}=\hQ^{\alpha}(f^\sigma,\hf^\sigma)
\end{equation}

\begin{lemma}
\label{ave:conv:L}
We recall the definition $L(\hf^\sigma)=\hf^\sigma*_vA$.
For any $R\ge0$, as $\sigma\to0$, we have
\begin{equation}
\label{conv:L}
L(\hf^{\sigma})\to L(\hf)\quad\text{in}\quad L^1([0,T]\times \R^d\times B_R^v).
\end{equation}

\end{lemma}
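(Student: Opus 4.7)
The plan is to combine the velocity averaging lemma Theorem~\ref{thm:ave}-$(ii)$ applied to the renormalised equation (\ref{TP:g-sigma-alpha-2}) with the truncation of the collision kernel $A$ by $A_k$ introduced in (\ref{Ak}), exactly as in the proof of Lemma~\ref{lemma:Q}. The obstruction to applying the averaging lemma directly is that $A$ is only locally integrable with polynomial growth in $v$, so the natural test function $(v,v_0)\mapsto A(v_0-v)$ fails to belong to $L^\infty_v L^1_{v_0}(B^v_R)$. I therefore run a three-stage limit in the order $\sigma\to 0$, then $\alpha\to 0$, then $k\to\infty$.

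Stage one: fix $k\in\N_+$, $\alpha\in(0,1)$, and set $E=B^v_R$. Since $\int_{B^v_R}A_k(v_0-v)\dd v_0\le \|A_k\|_{L^1(\R^d)}$ uniformly in $v$, the map $(t,x,v)\mapsto A_k(\cdot-v)$ lies in $L^\infty([0,T]\times\Do;L^1(E))$. By Lemma~\ref{lemma:f} and Lemma~\ref{lemma:Q} the pair $\bigl(\hg^{\sigma,\alpha}, \hQ^{\alpha}(f^\sigma,\hf^\sigma)\bigr)$ satisfies the hypotheses of Theorem~\ref{thm:ave}-$(ii)$ for equation (\ref{TP:g-sigma-alpha-2}). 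Hence
\begin{equation*}
L_k(\hg^{\sigma,\alpha})(t,x,v_0)\defeq \int_{\R^d}\hg^{\sigma,\alpha}(t,x,v)\,A_k(v_0-v)\dd v \;\longrightarrow\; L_k(\hg^\alpha)
\end{equation*}
strongly in $L^1([0,T]\times\R^d\times B^v_R)$ as $\sigma\to 0$, the limit being identified via the weak convergence $\hg^{\sigma,\alpha}\rightharpoonup \hg^\alpha$ recorded in (\ref{conv:sec-3}).

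Stage two: to pass from $\hg^{\sigma,\alpha}$ to $\hf^\sigma$ I exploit that $L_k$ is a bounded linear operator with norm $\le\|A_k\|_{L^1(\R^d)}$, together with the uniform estimate (\ref{conv:sigma:alpha}) proved inside Lemma~\ref{ave:conv:sigma}, which gives $\sup_\sigma\|\hf^\sigma-\hg^{\sigma,\alpha}\|_{L^1}\to 0$ as $\alpha\to 0$; the same bound applied to the weak limits yields $L_k(\hg^\alpha)\to L_k(\hf)$. Stage three: to replace $A_k$ by $A$ I reuse the computation (\ref{k:ge:R}) from Lemma~\ref{lemma:Q}, which uses the uniform moment bound (\ref{bdd:L122}) to give $\sup_\sigma\|L(\hf^\sigma)-L_k(\hf^\sigma)\|_{L^1([0,T]\times\R^d\times B^v_R)}\to 0$ as $k\to\infty$, and likewise $L_k(\hf)\to L(\hf)$. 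A triangle inequality, choosing first $k$ large, then $\alpha$ small, then $\sigma$ small, delivers (\ref{conv:L}).

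The main obstacle is the unboundedness of $A$ in the velocity variable, which forces the double approximation in $k$ and $\alpha$ and makes the \emph{uniformity in $\sigma$} of the $\alpha$-convergence essential; this uniformity is precisely the content of (\ref{conv:sigma:alpha}) and ultimately rests on the uniform equi-integrability (\ref{equi-int:f-sigma}) supplied by the entropy bound (\ref{bdd:H-D}).
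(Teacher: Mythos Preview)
Your proposal is correct and follows essentially the same approach as the paper: truncate $A$ by $A_k$, apply the velocity averaging Theorem~\ref{thm:ave}-$(ii)$ to \eqref{TP:g-sigma-alpha-2} with test function $A_k(\cdot-v)$ to get strong convergence of $L_k(\hg^{\sigma,\alpha})$, then remove the renormalisation via \eqref{conv:sigma:alpha} uniformly in $\sigma$, and finally let $k\to\infty$ using \eqref{k:ge:R}. The only cosmetic difference is that the paper obtains $L_k(\hg^{\sigma,\alpha})\to L_k(\hg^\alpha)$ in $L^1([0,T]\times\Do)$ (since $A_k\in L^1$) and restricts to $\R^d\times B^v_R$ only at the final $k\to\infty$ step, whereas you work in $L^1([0,T]\times\R^d\times B^v_R)$ throughout.
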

\begin{proof}
We first approximate the collision kernel $A$ by $A_k(v)=A(v)\mathbb{1}_{\{|v|\le k\}}\in L^1\cap L^\infty(\R^d)$. We define $L_k(g)=g*_v A_k$.
We apply Theorem~\ref{thm:ave}-$(ii)$ to equation \eqref{TP:g-sigma-alpha-2}  with $\varphi=A_k(v-v_*)$ to derive as $\sigma\to0$
\begin{equation}
\label{conv:L:1}
    L_k(\hg^{\sigma,\alpha})\to L_k(g^\alpha)\quad \text{in}\quad L^1([0,T]\times \Do).
\end{equation}

By using of the uniform convergence \eqref{conv:sigma:alpha}, we have, as $\alpha\to0$
\begin{align*}
    &\sup_{t,\sigma}\|(\hg^{\sigma,\alpha}-\hf^\sigma)*_vA_k\|_{L^1(\Do)} \le \sup_{t,\sigma}\|\hg^{\sigma,\alpha}-\hf^\sigma\|_{L^1(\Do)}\|A_k\|_{L^1(\R^d)}\to0.
\end{align*}
Thus, as $\alpha\to0$, we have
\begin{equation}
\label{conv:L:2}
 \sup_{t,\sigma}   L_k(\hg^{\sigma,\alpha})\to L_k(\hf^\sigma)\quad\text{and}\quad L_k(\hg^\alpha)\to L_k(\hf)
\end{equation}
in $L^1([0,T]\times\Do)$.  
The convergence \eqref{conv:L:1} and \eqref{conv:L:2} imply that as $\sigma\to0$
\begin{equation}
\label{conv:L:k}
L_k(\hf^{\sigma})\to L_k(\hf)\quad \text{in}\quad L^1((0,T)\times \Do).
\end{equation}

We note that for any $R\ge0$ $L(\hf^\sigma)\in L^\infty([0,T];L^1(\R^d\times B^v_R))$ and as in \eqref{k:ge:R}
\begin{equation*}
\lim_{k\to0}\sup_{t,\sigma}\|L_k(\hf^\sigma)-L(\hf^\sigma)\|_{L^1(\R^d\times B^v_R)}\to0.
\end{equation*}
We pass to the limit by letting $k\to\infty$ in \eqref{conv:L:k} to obtain \eqref{conv:L}. 
\end{proof}

We show Theorem~\ref{thm:conv}-$(3)$.
Based on the convergence of $\int_{\R^d} f^\sigma\varphi\dd v$ and  $L(\hf^\sigma)$ in $L^1$, we have the following lemma.
\begin{lemma}
   We have $\hf$ and $f\in C([0,T];L^1(\Do))$.
\end{lemma}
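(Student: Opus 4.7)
The plan is to exploit the renormalised transport equation to derive equicontinuity in time of the approximating family tested against smooth compactly-supported functions, pass to a continuous representative of the weak limit, and finally send $\alpha\to 0$.

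Fix $\alpha\in(0,1)$ and $\varphi\in C_c^\infty(\Do)$. The renormalised equation \eqref{FBE:renorm} yields
\begin{equation*}
\int_\Do g^{\sigma,\alpha}(t)\varphi\dd x\dd v - \int_\Do g^{\sigma,\alpha}(s)\varphi\dd x\dd v = \int_s^t\int_\Do \bigl(g^{\sigma,\alpha}\,v\cdot\nabla_x\varphi + Q^\alpha(f^\sigma,\hf^\sigma)\varphi\bigr)\dd x\dd v\dd\tau.
\end{equation*}
Proposition \ref{prop:moment:ent} gives a uniform $L^\infty_tL^1_{x,v}$ bound on $g^{\sigma,\alpha}\le f^\sigma$, and Lemma \ref{lemma:Q} provides a uniform $L^1([0,T]\times\R^d\times B_R^v)$ bound on $Q^\alpha(f^\sigma,\hf^\sigma)$ for any $R$ with $\supp\varphi\subset\R^d\times B_R^v$. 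Hence the right-hand side is bounded by $C(\alpha,\varphi)|t-s|$, so $\{t\mapsto\int g^{\sigma,\alpha}(t)\varphi\}_\sigma$ is equi-Lipschitz, and by Arzelà--Ascoli combined with $g^{\sigma,\alpha}\rightharpoonup g^\alpha$ in $L^1([0,T]\times\Do)$ the map $t\mapsto\int g^\alpha(t)\varphi$ admits a continuous representative.

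Using a countable dense family of test functions, the uniform moment bound $\sup_{t,\sigma}\|f^\sigma_t\|_{L^1_{2,0}}<+\infty$ (controlling $x$-tails), the kinetic energy bound (controlling $v$-tails), and the equi-integrability from Proposition \ref{prop:moment:ent}, one concludes that $g^\alpha$ has a representative continuous in $t$ into $L^1(\Do)$ equipped with the weak topology. To upgrade weak to norm continuity we pass to the mild formulation along free-transport characteristics: each $g^{\sigma,\alpha}$ satisfies $g^{\sigma,\alpha}(t,x,v)=\alpha^{-1}\log(1+\alpha f_0)(x-tv,v)+\int_0^tQ^\alpha(\tau,x-(t-\tau)v,v)\dd\tau$, and the weak compactness of $Q^\alpha$ on sets of bounded velocity yields an analogous identity for $g^\alpha$ with source in $L^1_\loc$. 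Dominated convergence then gives $g^\alpha\in C([0,T];L^1_\loc(\Do))$, and the moment tails extend this to $g^\alpha\in C([0,T];L^1(\Do))$.

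Finally, the uniform estimate $\sup_{t,\sigma}\|g^{\sigma,\alpha}-f^\sigma\|_{L^1}\le\varepsilon_\alpha\to 0$ from Lemma \ref{ave:conv:sigma} passes to weak limits by lower semicontinuity of the $L^1$-norm, giving $\sup_t\|g^\alpha(t)-f(t)\|_{L^1}\le\varepsilon_\alpha$. Hence $\{g^\alpha\}_\alpha$ is Cauchy in $C([0,T];L^1(\Do))$ with limit $f$, so $f\in C([0,T];L^1(\Do))$. The argument for $\hf$ is identical, applied to equation \eqref{FBE:tilde} for $\hg^{\sigma,\alpha}$, using $\hg^{\sigma,\alpha}\rightharpoonup\hg^\alpha$ and the uniform convergence $\sup_{t,\sigma}\|\hg^{\sigma,\alpha}-\hf^\sigma\|_{L^1}\to 0$ that follows from Lemma \ref{ave:conv:sigma} by convolution with $\kappa^\sigma$. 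The main obstacle is precisely the upgrade from weak to strong $L^1$ continuity in time, which we handle via the mild characteristic formulation together with the tail control provided by the uniform moment bounds.
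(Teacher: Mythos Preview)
Your overall strategy is sound and close in spirit to the paper's, but there is one genuine error and a difference in organisation worth noting.

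\textbf{The equi-Lipschitz claim is false.} You assert that the right-hand side of the tested renormalised equation is bounded by $C(\alpha,\varphi)|t-s|$, citing the uniform $L^1([0,T]\times\R^d\times B_R^v)$ bound on $Q^\alpha(f^\sigma,\hf^\sigma)$ from Lemma~\ref{lemma:Q}. A uniform $L^1$ bound in space-time does \emph{not} yield a Lipschitz estimate in $t$; for that you would need an $L^\infty_tL^1_{x,v}$ bound. The loss term does satisfy such a bound (see~\eqref{ineq:loss}), but the gain term does not: it is controlled only through the comparison inequality~\eqref{bdd:Q-:Q+}, which introduces the dissipation $h(f^\sigma)$, merely in $L^1_t$. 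The paper handles this by choosing the constant $C$ in the comparison inequality to depend on $|t-s|$, obtaining a modulus of the form $|t-s|^{1/2}+|\log|t-s||^{-1}$. Alternatively, you could invoke the full weak compactness (equi-integrability in $t$) from Lemma~\ref{lemma:Q} rather than just the $L^1$ bound, which does yield a uniform-in-$\sigma$ modulus of continuity. Either way, you get equicontinuity, not equi-Lipschitz; this is enough for the rest of your argument.

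\textbf{Organisation.} The paper interchanges the order of limits relative to your argument: it first combines the equicontinuity of $g^{\sigma,\alpha,\sharp}$ with the uniform convergence $\sup_{t,\sigma}\|g^{\sigma,\alpha}-f^\sigma\|_{L^1}\to 0$ as $\alpha\to 0$ to obtain equicontinuity of $\{f^{\sigma,\sharp}\}_\sigma$ directly in $L^1(B_R^{x,v})$, and only then passes $\sigma\to 0$, letting the modulus transfer to the weak limit. You instead pass $\sigma\to 0$ first to land on $g^\alpha$, establish $g^\alpha\in C([0,T];L^1)$ via the mild formulation, and then send $\alpha\to 0$. Both routes are valid; the paper's is slightly more direct because it avoids identifying the limiting equation satisfied by $g^\alpha$, working only with a priori bounds. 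Your mild-formulation step is correct (the characteristic pullback is an $L^1$ isometry, so the identity passes to weak limits), though the phrase ``dominated convergence'' is imprecise --- what you are really using is absolute continuity of $t\mapsto\int_0^t\|H^\sharp(\tau)\|_{L^1(B_R)}\dd\tau$ for the weak limit $H$ of $Q^\alpha(f^\sigma,\hf^\sigma)$.
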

\begin{proof}
We first show $\hf\in C([0,T];L^1(\Do))$.  We recall the equation \eqref{TP:g-sigma-alpha} on the characteristic lines 
\begin{equation*}
    \d_t \hg^{\sigma,\alpha,\sharp}=\hQ^{\alpha}(f^\sigma,\hf^\sigma)^\sharp,
\end{equation*}
where 
\begin{align*}
&\hg^{\sigma,\alpha,\sharp}\defeq \hg^{\sigma,\alpha}(t,x+tv,v)\quad\text{and}\quad   \hQ^{\alpha}(f^\sigma,\hf^\sigma)^\sharp\defeq  \hQ^{\alpha}(f^\sigma,\hf^\sigma)(t,x+tv,v). 
\end{align*}

Without loss of generality, we assume $T=1$. We observe that on the bounded ball $B^{x,v}_R\subset \Do$
\begin{equation*}
    \|\hg^{\sigma,\alpha,\sharp}(t)-\hg^{\sigma,\alpha,\sharp}(s)\|_{L^1(B^{x,v}_R)}\le \int_s^t\int_{B^{x,v}_{2R}} \hQ^{\alpha}(f^\sigma,\hf^\sigma)\dd x\dd v\dd\tau.
\end{equation*}
For any fixed $\alpha\in(0,1)$, the weak compactness of $\{\hQ^{\alpha}(f^\sigma,\hf^\sigma)\}$ in $L^1([0,T]\times\R^d\times B^v_R)$ ensures the equi-continuity of $\hg^{\sigma,\alpha,\sharp}$. Indeed, the loss term can be bounded by 
\begin{equation}
\label{ineq:loss}
\begin{aligned}
&\int_s^t\int_{B^{x,v}_{2R}}\frac{Q^{-}(f^\sigma,\hf^\sigma)}{1+\alpha f^\sigma}
    *_x \kappa^\sigma\dd x\dd v\dd\tau\\
    =&{}\int_s^t\int_{B^{x,v}_{2R}}\frac{f^\sigma}{1+\alpha f^\sigma}(f^\sigma*_x\kappa^\sigma*_vA)\dd x\dd v\dd\tau\\
\le&{}\alpha^{-1}\int_{\Do}\hf^\sigma(x,v_*)\big(\int_{B_R^v}A(v-v_*)\dd v\big)\dd x \dd v_*\\
\lesssim &{}|t-s|\alpha^{-1}R^{\mu+d}\|f^\sigma\langle v\rangle^\mu\|_{L^1(\Do)},
\end{aligned}    
\end{equation}
where $\|f^\sigma\langle v\rangle^\mu\|_{L^1(\Do)}$ is uniformly bounded.
We use the inequality \eqref{relation:f':f} to compare $Q^+$ and $Q^-$, the gain term can be bounded by
\begin{equation}
    \label{ineq:gain}
    \begin{aligned}
&\int_s^t\int_{B^{x,v}_{2R}}\frac{Q^{+}(f^\sigma,\hf^\sigma)}{1+\alpha f^\sigma}
    *_x \kappa^\sigma\dd x\dd v\dd \tau\\
     \le&{} C\int_s^t\int_{B^{x,v}_{2R}}\frac{Q^{-}(f^\sigma,\hf^\sigma)}{1+\alpha f^\sigma}*_x\kappa^\sigma\dd x\dd v\dd\tau+(\log C)^{-1}\int_0^T\cD(f^\sigma)\dd t.
\end{aligned}
\end{equation}
Notice that $\cD(f^\sigma)\in L^1([0,T])$ as in Proposition~\ref{prop:moment:ent}. By choosing $C=|s-t|^{-\frac12}$, we have
\begin{equation}
\label{ineq:hg:s-t-2}
   \sup_\sigma \|\hg^{\sigma,\alpha,\sharp}(t)-\hg^{\sigma,\alpha,\sharp}(s)\|_{L^1(B^{x,v}_{2R})}\lesssim_{\alpha} |t-s|^{\frac12}+\big|\log|t-s|\big|.
\end{equation}
We combine \eqref{ineq:hg:s-t-2} and the uniform convergence \eqref{conv:sigma:alpha} $\lim_{\alpha\to0}\sup_{t,\sigma}\|\hg^{\sigma,\alpha}-\hf^\sigma\|_{L^1(\Do)}=0$
to derive the equi-continuity of $\{\hf^{\sigma\sharp}\}_{\sigma}$.
The weak convergence  $\hf^\sigma\rightharpoonup \hf$ in $L^1(\Do)$ uniformly in $t$ implies 
\begin{equation*}
    \sup_t \|\hf_t\|_{L^1((B^{x,v}_R)^c)}\to0\quad \text{as}\quad R\to\infty.
\end{equation*}
Hence, we have $\hf\in C([0,T];L^1(\Do))$.

We repeat the above arguments on the equation \eqref{TP:g-sigma-alpha:f} 
\begin{equation*}
    \d_t g^{\sigma,\alpha,\sharp}=(Q^{\alpha}(f^\sigma,\hf^\sigma))^\sharp
\end{equation*}
to obtain $f\in C([0,T];L^1(\Do))$.
\end{proof}

We show Theorem~\ref{thm:conv}-$(4)$.
\begin{lemma}
\label{lemma:inner:Q-}
As $\sigma\to0$, we have  
\begin{align}
\frac{ Q^\pm(f^\sigma,\hf^\sigma)}{1+L(\hf^\sigma)}\rightharpoonup
\frac{Q^\pm(f,\hf)}{1+L(\hf)}\quad\text{in}\quad L^1([0,T]\times\Do)\label{conv:Qpm-L},
\end{align}
and
\begin{align}
\frac{ f^\sigma\hf^\sigma_*}{1+L(\hf^\sigma)}B\rightharpoonup
\frac{f\hf_*}{1+L(\hf)}B\quad\text{in}\quad L^1([0,T]\times\R^{3d}\times S^{d-1}). \label{Sec8:A}
\end{align}
\end{lemma}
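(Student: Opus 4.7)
The plan is to establish \eqref{Sec8:A} first via a velocity-averaging combined with a weak-strong product argument, and then deduce \eqref{conv:Qpm-L} by integration and a pre-/post-collisional change of variables. Throughout, one passes to a subsequence whenever a.e.\ convergence is invoked.

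For \eqref{Sec8:A}, fix a test function $\varphi\in C_c^\infty([0,T]\times\R^{3d}\times S^{d-1})$ and rewrite
\[
\int\frac{f^\sigma\hf^\sigma_* B\varphi}{1+L(\hf^\sigma)}\dd t\dd x\dd v\dd v_*\dd\omega=\int f^\sigma(t,x,v)\,\frac{\Psi^\sigma(t,x,v)}{1+L(\hf^\sigma)(t,x,v)}\dd t\dd x\dd v,
\]
with $\Psi^\sigma(t,x,v)\defeq\int\hf^\sigma(x,v_*)B(v-v_*,\omega)\varphi\dd v_*\dd\omega$. By Theorem~\ref{thm:ave}-$(ii)$ applied to $\hf^\sigma$ (through $\hg^{\sigma,\alpha}$ together with the uniform-in-$\alpha$ approximation \eqref{conv:sigma:alpha}, exactly as in Lemma~\ref{ave:conv:sigma}), $\Psi^\sigma\to\Psi\defeq\int\hf B\varphi\dd v_*\dd\omega$ strongly in $L^1$ on the compact support of $\varphi$, and hence a.e. The decisive observation is the pointwise bound $\Psi^\sigma\le\|\varphi\|_\infty L(\hf^\sigma)$, which gives $\frac{\Psi^\sigma}{1+L(\hf^\sigma)}\le\|\varphi\|_\infty$ uniformly in $\sigma$. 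Combined with $L(\hf^\sigma)\to L(\hf)$ a.e.\ from Lemma~\ref{ave:conv:L}, dominated convergence yields $\frac{\Psi^\sigma}{1+L(\hf^\sigma)}\to\frac{\Psi}{1+L(\hf)}$ strongly in every $L^p_\loc$, $p<\infty$.

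Since $\{f^\sigma\}$ is equi-integrable in $L^1$ by Proposition~\ref{prop:moment:ent}, a standard weak-strong splitting $\{f^\sigma\le M\}\cup\{f^\sigma>M\}$ (dominated convergence against the bounded factor on the first piece, equi-integrability on the residual) gives $\int f^\sigma\frac{\Psi^\sigma}{1+L(\hf^\sigma)}\to\int f\frac{\Psi}{1+L(\hf)}$. Weak $L^1$ compactness of $\{\frac{f^\sigma\hf^\sigma_* B}{1+L(\hf^\sigma)}\}$ on $[0,T]\times\R^{3d}\times S^{d-1}$ follows from Dunford-Pettis: the $L^1$ bound is $\int\cdot\dd v_*\dd\omega=\frac{f^\sigma L(\hf^\sigma)}{1+L(\hf^\sigma)}\le f^\sigma$, equi-integrability reduces via the conditional density $\frac{\hf^\sigma_* B}{L(\hf^\sigma)}\dd v_*\dd\omega$ to that of $f^\sigma$, and tightness is supplied by the $L^1_{2,2}$ moment bounds of Proposition~\ref{prop:moment:ent}. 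Hence the $C_c^\infty$-identification upgrades to full weak $L^1$ convergence in \eqref{Sec8:A}. For the loss part of \eqref{conv:Qpm-L}, integrate \eqref{Sec8:A} in $(v_*,\omega)$: the integrated sequence is uniformly dominated by $f^\sigma$ in $L^1$, so weak convergence is preserved. For the gain term, test against $\varphi\in C_c^\infty([0,T]\times\Do)$ and apply the measure-preserving involution $(v,v_*)\leftrightarrow(v',v'_*)$ (with $B$ invariant) to obtain
\[
\int\frac{Q^+(f^\sigma,\hf^\sigma)\varphi(v)}{1+L(\hf^\sigma)(v)}=\int\frac{f^\sigma(v)\hf^\sigma(v_*)B\varphi(v')}{1+L(\hf^\sigma)(v')}.
\]
The scheme above applies verbatim with $v$ replaced by $v'$ in the denominator: the bound $\|\varphi\|_\infty L(\hf^\sigma)(v')/(1+L(\hf^\sigma)(v'))\le\|\varphi\|_\infty$ still holds, and $L(\hf^\sigma)(v')\to L(\hf)(v')$ a.e.\ by composing Lemma~\ref{ave:conv:L} with the measure-preserving swap. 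Corollary~\ref{coro:weak-cpt:quo} supplies the weak $L^1$ compactness of $\{Q^+/(1+L(\hf^\sigma))\}$, so $C_c^\infty$-identification is enough.

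The main obstacle is reconciling the merely weak $L^1$ convergence of $f^\sigma$ with the only $L^1_\loc$-strong convergence of $L(\hf^\sigma)$: ordinary weak-strong products fail without an $L^\infty$ bound on one factor, and the inequality $\Psi^\sigma\le\|\varphi\|_\infty L(\hf^\sigma)$ is precisely what produces this bound on the crucial ratio. For the gain term the extra subtlety is that the pre-/post-collisional swap mixes $(v_*,\omega)$ into the denominator, but the measure-preserving character of the involution and the invariance of $B$ preserve both the strong convergence and the key $L^\infty$ bound, allowing the same argument to close.
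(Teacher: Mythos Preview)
Your treatment of \eqref{Sec8:A} and of the loss term in \eqref{conv:Qpm-L} follows the paper's line: velocity averaging gives strong $L^1_\loc$ convergence of the $\hf^\sigma_*$-integral, the pointwise inequality $|\Psi^\sigma|\le\|\varphi\|_\infty L(\hf^\sigma)$ produces a uniformly bounded ratio, and a weak--strong product with $f^\sigma$ concludes. The paper passes from compactly supported test functions to arbitrary $\varphi\in L^\infty$ by truncating $B$ to $B_k=B\,\mathbb{1}_{\{|z|\le k\}}$ and controlling the remainder through the uniform energy bound; your alternative route through Dunford--Pettis compactness is legitimate in principle, but the one-line claim that ``equi-integrability reduces via the conditional density $\frac{\hf^\sigma_* B}{L(\hf^\sigma)}\dd v_*\dd\omega$ to that of $f^\sigma$'' is not a proof. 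Writing $h^\sigma$ as (equi-integrable marginal in $(t,x,v)$)$\times$(probability kernel in $(v_*,\omega)$) does \emph{not} yield equi-integrability in the joint variables, because the kernel may concentrate. This is a fixable gap --- the paper's $B_k$ truncation does the job cleanly.

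The genuine problem is your argument for the gain term in \eqref{conv:Qpm-L}. After the involution you correctly arrive at
\[
\int f^\sigma(v)\,\hf^\sigma(v_*)\,B\,\frac{\varphi(v')}{1+L(\hf^\sigma)(v')}\,\dd t\dd x\dd v\dd v_*\dd\omega,
\]
but the claim that ``the scheme applies verbatim'' is false. In the scheme for \eqref{Sec8:A}, integrating out $(v_*,\omega)$ bounds the numerator by $\|\varphi\|_\infty L(\hf^\sigma)(v)$, and the denominator $1+L(\hf^\sigma)(v)$ sits at the \emph{same} point $v$, so their ratio is $\le\|\varphi\|_\infty$. Here the denominator is evaluated at $v'=v'(v,v_*,\omega)$, which depends on the very variables you are integrating out; after the $(v_*,\omega)$-integration you only obtain
\[
\Big|\int\hf^\sigma(v_*)B\,\frac{\varphi(v')}{1+L(\hf^\sigma)(v')}\dd v_*\dd\omega\Big|\le \|\varphi\|_\infty\,L(\hf^\sigma)(v),
\]
with \emph{no matching denominator}. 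This factor is not uniformly bounded, so the weak--strong product against $f^\sigma$ collapses; and there is no rescue via \eqref{Sec8:A} either, since passing from $\frac{1}{1+L(\hf^\sigma)(v)}$ to $\frac{1}{1+L(\hf^\sigma)(v')}$ introduces the unbounded factor $\frac{1+L(\hf^\sigma)(v)}{1+L(\hf^\sigma)(v')}$. The paper does not attempt this change-of-variables route for $Q^+$: it invokes the comparison inequality \eqref{bdd:Q-:Q+} (which at minimum supplies weak compactness via Corollary~\ref{coro:weak-cpt:quo}), and the full identification of the $Q^+$-limit is in effect completed only through the smoothing-based strong convergence of Section~\ref{sec:strong}.
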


\begin{proof}
Since $\frac{L(\hf^\sigma)}{1+L(\hf^\sigma)}$ is bounded and converges to $ \frac{L(\hf)}{1+L(\hf)}$ in measure, the convergence \eqref{conv:Qpm-L} holds with $Q^-$. The comparison inequality \eqref{bdd:Q-:Q+} ensures the convergence of the gain term $Q^+$.

We show \eqref{Sec8:A}. We  first approximate the collision kernel $B$  by $B_k(v,\omega)=B(v,\omega){}\mathbb{1}_{\{|v|\le R\}}\in L^\infty\cap L^1(\R^d\times S^{d-1})$. For any $\varphi\in L^\infty([0,T]\times\R^{3d}\times S^{d-1})$, Theorem~\ref{thm:ave}-$(ii)$ implies that
\begin{equation}
\label{con:B-k}
   \int_{\R^d\times S^{d-1}}\hf^\sigma_* \varphi B_k(v-v_*) \dd v_*\dd\omega\to \int_{\R^d\times S^{d-1}}\hf_* \varphi B_k(v-v_*)\dd v_*\dd\omega
\end{equation}
in $L^1([0,T]\times \Do)$.
Combining with $L(\hf^\sigma)
\to L(\hf)$ in $L^1([0,T]\times \R^d\times B^v_R)$ for any $R\ge 0$ showed in  Lemma~\ref{ave:conv:L}, we have 
\begin{align*}
   \frac{\int_{\R^d\times S^{d-1}}\hf^\sigma_* \varphi B_k \dd v_*\dd\omega}{1+L(\hf^\sigma)}\to \frac{\int_{\R^d\times S^{d-1}}\hf_* \varphi B_k\dd v_*\dd\omega}{1+L(\hf)}
\end{align*}
almost everywhere on $[0,T]\times \Do$.
Moreover, the following uniform bound holds
\begin{align*}
 \left|\frac{\int_{\R^d\times S^{d-1}}\hf^\sigma_* \varphi B_k \dd v_*\dd\omega}{1+L(\hf^\sigma)}\right|\le \|\varphi\|_{L^\infty}\frac{L(\hf^\sigma)}{1+L(\hf^\sigma)}\le \|\varphi\|_{L^\infty}.
\end{align*}
Combining with $f^\sigma\rightharpoonup f$ in $L^1([0,T]\times\Do)$, the convergence \eqref{Sec8:A} holds with $B=B_k$ . 

We pass to the limit by letting $k\to\infty$. We only need to show the convergence \eqref{con:B-k} holds for $B$ in $L^1([0,T]\times\R^d\times B^v_R)$ for any $R\ge0$. Without loss of generality, we assume the positivity of $\varphi$ and $k\ge R$. Indeed, we have
\begin{align*}
    &\int_{\R^{2d}\times B^v_R\times S^{d-1}}\hf^\sigma_*\varphi B(v-v_*)\mathbb{1}_{\{|v-v_*|\ge k\}}\dd x\dd v_* \dd v\dd\omega\\
    \lesssim&{} \|\varphi\|_{L^\infty}\int_{\R^d\times (B^v_{k-R})^c}\hf^\sigma \langle v\rangle^\mu\dd x\dd v\to0\quad\text{as}\quad k\to\infty
\end{align*}
uniformly in $\sigma$, since $\|\hf^\sigma\langle v\rangle ^2\|_{L^1(\Do)}$ is uniformly bounded.
        
\end{proof}

If the collision kernel is bounded and integrable, then we have the following convergence results.
\begin{lemma}
\label{conv:Q:compo}
Let $B\in L^1\cap L^\infty(\R^d\times S^{d-1})$. Let $(E,\mu)$ be a measurable space.
Let $k:\R_+\to\R_+$ be a bounded, Lipschitz function, $k(0)=0$ and $|k'(z)|\lesssim (1+z)^{-1}$. Then we have 
    \begin{equation}
    \label{conv:Q:beta}
        \int_{\R^d}Q^{\pm}(k(f^\sigma),k(\hf^\sigma))\varphi\dd v\to \int_{\R^d}Q^{\pm}(\tilde k,\tilde{\mathbb{k}})\varphi\dd v \quad\forall \varphi\in L^\infty([0,T]\times\Do;L^1(E))
    \end{equation}
   as $\sigma\to0$ in $L^1([0,T]\times \R^d\times E)$, where $\tilde k$ and $\tilde{\mathbb{k}}$ denote the weak limits of $k(f^\sigma)$ and $k(\hf^\sigma)$ in $L^1([0,T]\times \Do)$.
\end{lemma}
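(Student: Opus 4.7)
The strategy is to mimic the blueprint of Lemma~\ref{ave:conv:sigma} and the proof of Theorem~\ref{thm:conv}-(4): first upgrade the weak convergences $k(f^\sigma)\rightharpoonup\tilde k$ and $k(\hf^\sigma)\rightharpoonup\tilde{\mathbb k}$ in $L^1$ to strong convergence of velocity averages against test functions in $L^\infty(L^1(E))$, and then combine the two results through a double velocity-averaging argument to pass to the limit inside the bilinear collision operators $Q^\pm$. The weak $L^1$-compactness of $\{k(f^\sigma)\}$ and $\{k(\hf^\sigma)\}$ is immediate: since $k$ is bounded and Lipschitz with $k(0)=0$, the pointwise bounds $0\le k(f^\sigma)\le \|k'\|_\infty f^\sigma$ and $0\le k(\hf^\sigma)\le \|k'\|_\infty\hf^\sigma$ combined with Lemma~\ref{lemma:f} yield equi-integrability, while boundedness of $k$ gives a uniform $L^\infty$ bound.

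For the velocity averaging of $k(f^\sigma)$, the chain rule gives
\begin{equation*}
(\d_t+v\cdot\nabla_x)k(f^\sigma)=k'(f^\sigma)\,Q(f^\sigma,\hf^\sigma),
\end{equation*}
with source dominated by $C Q^{\alpha=1,\pm}(f^\sigma,\hf^\sigma)$ via $|k'(z)|\lesssim (1+z)^{-1}$. Lemma~\ref{lemma:Q} provides weak relative compactness of the source in $L^1([0,T]\times\R^d\times B^v_R)$, and Theorem~\ref{thm:ave}-(ii) then delivers
\begin{equation*}
\int_{\R^d} k(f^\sigma)\varphi\,\dd v \to \int_{\R^d}\tilde k\,\varphi\,\dd v \quad \text{in } L^1([0,T]\times\R^d\times E)
\end{equation*}
for every $\varphi \in L^\infty([0,T]\times\Do;L^1(E))$. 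For $k(\hf^\sigma)$, the direct chain-rule approach leads to the source $k'(\hf^\sigma)(Q*_x\kappa^\sigma)$ whose weak $L^1_{\loc}$-compactness is delicate — indeed the natural bound $\frac{C}{1+\hf^\sigma}(Q^\pm*_x\kappa^\sigma)$ does not match $\hQ^{\alpha=1,\pm}$ (whose renormalizer $(1+f^\sigma)^{-1}$ sits inside the $x$-convolution). This is the step I expect to be the main obstacle; I circumvent it by detouring through $\hg^{\sigma,\alpha}$, whose transport equation \eqref{FBE:tilde} gives
\begin{equation*}
(\d_t+v\cdot\nabla_x) k(\hg^{\sigma,\alpha}) = k'(\hg^{\sigma,\alpha})\,\hQ^\alpha(f^\sigma,\hf^\sigma)
\end{equation*}
with source bounded by $\|k'\|_\infty |\hQ^{\alpha,\pm}|$, weakly compact in $L^1([0,T]\times\R^d\times B^v_R)$ by Lemma~\ref{lemma:Q}. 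Theorem~\ref{thm:ave}-(ii) then yields strong averaged convergence of $k(\hg^{\sigma,\alpha})$ to its weak $L^1$-limit $\tilde k^\alpha$ for every fixed $\alpha\in(0,1)$. Combining with the Lipschitz control $\|k(\hf^\sigma) - k(\hg^{\sigma,\alpha})\|_{L^1} \le \|k'\|_\infty\|\hf^\sigma - \hg^{\sigma,\alpha}\|_{L^1}\to 0$ as $\alpha \to 0$ uniformly in $\sigma$ (by \eqref{conv:sigma:alpha}) together with a triangle inequality, one obtains
\begin{equation*}
\int_{\R^d}k(\hf^\sigma)\varphi\,\dd v \to \int_{\R^d}\tilde{\mathbb k}\,\varphi\,\dd v \quad \text{in } L^1([0,T]\times\R^d\times E).
\end{equation*}

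Granting both velocity-averaging results, I pass to the limit in $Q^\pm$ by a double application of Theorem~\ref{thm:ave}-(ii). For the loss term, expand
\begin{equation*}
\int_{\R^d}Q^-(k(f^\sigma),k(\hf^\sigma))\varphi\,\dd v = \int_{\R^{2d}\times S^{d-1}} B(v-v_*,\omega)\,k(f^\sigma)(x,v)\,k(\hf^\sigma)(x,v_*)\,\varphi(x,v)\,\dd v\,\dd v_*\,\dd\omega,
\end{equation*}
and apply the averaging for $k(f^\sigma)$ against the augmented test function $\tilde\varphi(x,v;v_*,\omega,\eta):=B(v-v_*,\omega)\varphi(x,v;\eta)$, which belongs to $L^\infty([0,T]\times\Do;L^1(\R^d\times S^{d-1}\times E))$ thanks to $B\in L^1$. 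This yields strong $L^1$-convergence of $\psi^\sigma:=\int B\,k(f^\sigma)\varphi\,\dd v$ to $\psi^\infty := \int B\,\tilde k\,\varphi\,\dd v$. Splitting
\begin{equation*}
\int k(\hf^\sigma)\psi^\sigma\,\dd v_*\,\dd\omega - \int \tilde{\mathbb k}\,\psi^\infty\,\dd v_*\,\dd\omega = \int k(\hf^\sigma)(\psi^\sigma - \psi^\infty)\,\dd v_*\,\dd\omega + \int (k(\hf^\sigma) - \tilde{\mathbb k})\psi^\infty\,\dd v_*\,\dd\omega,
\end{equation*}
the first term vanishes in $L^1([0,T]\times\R^d\times E)$ by the uniform $L^\infty$ bound on $k(\hf^\sigma)$ together with $\psi^\sigma\to\psi^\infty$ in $L^1$, while the second vanishes by the velocity-averaging for $k(\hf^\sigma)$ applied to $\psi^\infty \in L^\infty(L^1(S^{d-1}\times E))$ (whose $L^\infty(L^1)$-norm is controlled by $\|\tilde k\|_\infty\|\varphi\|_{L^\infty(L^1)}\|B\|_{L^1}$). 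The gain term $Q^+$ is handled identically after the standard change of variables $(v,v_*)\leftrightarrow (v',v_*')$ which exchanges $Q^+$ and $Q^-$ in the weak formulation.
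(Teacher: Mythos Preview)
Your proof is correct and follows essentially the same approach as the paper's: weak compactness of $k(f^\sigma),k(\hf^\sigma)$ from the pointwise bound $0\le k(f^\sigma)\le Cf^\sigma$; velocity averaging for $k(f^\sigma)$ via its own transport equation with source $k'(f^\sigma)Q(f^\sigma,\hf^\sigma)$; velocity averaging for $k(\hf^\sigma)$ via the detour through $k(\hg^{\sigma,\alpha})$ and the Lipschitz control combined with \eqref{conv:sigma:alpha}; and then a splitting argument exploiting the $L^\infty$ bound on $k$ to pass to the limit in the bilinear expression. The only cosmetic difference is the order of the splitting: the paper first integrates out $v_*$ (forming $L(k(\hf^\sigma))$ and showing $L(k(\hf^\sigma))\to L(\tilde{\mathbb k})$ in $L^1$) and then averages $k(f^\sigma)$ against $L(\tilde{\mathbb k})\varphi$, whereas you first average $k(f^\sigma)$ against the full kernel $B\varphi$ and then average $k(\hf^\sigma)$ against the resulting $\psi^\infty$; both routes rely on the same ingredients ($B\in L^1$, $\tilde k,\tilde{\mathbb k}\in L^\infty$, Theorem~\ref{thm:ave}-(ii)) and are equally valid.
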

\begin{proof}
The assumptions on $k$ imply that
\begin{equation*}
    0\le k(f^\sigma)\le f^\sigma\quad\text{and}\quad 0\le k(\hf^\sigma)\le \hf^\sigma.
\end{equation*}
The weak compactness of $k(f^\sigma)$ and $k(\hf^\sigma)$ in $L^1([0,T]\times \Do)$ follows from the weak compactness of $f^\sigma$ and $\hf^\sigma$.

We apply velocity averaging Theorem~\ref{thm:ave}-$(ii)$ to equation
\begin{equation*}
    (\d_t+v\cdot\nabla_x)k(f^\sigma)=k'(f^\sigma)Q(f^\sigma,\hf^\sigma),
\end{equation*}
where $k'(f^\sigma)Q(f^\sigma,\hf^\sigma)\in L^1([0,T]\times\Do)$ and weakly relatively compact, analogous to the weak compactness $Q^{\alpha}(f^\sigma,\hf^\sigma)$ in Lemma~\ref{lemma:Q}. Thus we have
\begin{equation*}
\label{conv:beta:1}
    \int_{\R^d}k(f^\sigma)\varphi\dd v\to \int_{\R^d}\tilde k\varphi\dd v\quad \text{in }^1([0,T]\times\R^d\times E)
\end{equation*}
for all $\varphi\in L^\infty([0,T]\times\Do;L^1(E))$.

For any fixed $\alpha\in(0,1)$, we consider the equation
\begin{equation*}
(\d_t+v\cdot\nabla_x)k(\hg^{\sigma,\alpha})=k'(\hg^{\sigma,\alpha})\hQ^{\alpha}(f^\sigma,\hf^\sigma),
\end{equation*}
where $k(\hg^{\sigma,\alpha})$ and $k'(\hg^{\sigma,\alpha})\hQ^{\alpha}(f^\sigma,\hf^\sigma)$ are weakly relatively compact in $L^1([0,T]\times\Do)$ since  $0\le k(\hg^{\sigma,\alpha})\le \hg^{\sigma,\alpha}$ and $0\le k'(\hg^{\sigma,\alpha})\hQ^{\alpha}(f^\sigma,\hf^\sigma)\le \hQ^{\alpha}(f^\sigma,\hf^\sigma)$. Theorem~\ref{thm:ave}-$(ii)$ ensures
that $\{\int_{\R^d}k(\hg^{\sigma,\alpha})\varphi\dd v\}$ is a Cauchy sequence in
$L^1([0,T]\times\R^d\times E)$. 

Since $k$ is a Lipschitz function, by using of the uniform convergence \eqref{conv:sigma:alpha}, we have
\begin{equation*}
\lim_{\alpha\to0}\sup_{t,\sigma}\|k(\hf^\sigma)-k(\hg^{\sigma,\alpha})\|_{L^1(\Do)}=0.
\end{equation*}
By assumption $A=\int_{S^{d-1}}B\dd\omega\in L^1\cap L^\infty(\R^d)$, velocity averaging Theorem~\ref{thm:ave}-$(ii)$ implies that
\begin{equation}
\label{conv:beta:2}
L(k(\hf^\sigma))\to L(\tilde{\mathbb{k}})\quad\text{in}\quad L^1([0,T]\times\Do).
\end{equation}

We are ready to show the convergence \eqref{conv:Q:beta} for loss term $Q^-(k(f^\sigma),k(\hf^\sigma))$
\begin{align*}
    &\Big\|\int_{\R^d}k(f^\sigma) L(k(\hf^\sigma))\varphi\dd v-\int_{\R^d}\tilde k L(\tilde{\mathbb{k}})\varphi\dd v\Big\|_{L^1([0,T]\times \R^d\times E)}\\
    \le&{} \Big\|\int_{\R^d}\big(k(f^\sigma) -\tilde k\big)L(\tilde{\mathbb{k}})\varphi\dd v\Big\|_{L^1}+\Big\|\int_{\R^d} k(f^\sigma)\big(L(k(\hf^\sigma)-L(\tilde {\mathbb{k}}))\varphi\dd v\Big\|_{L^1}\\
    \le&{} \Big\|\int_{\R^d}\big(k(f^\sigma) -\tilde k\big)L(\tilde{\mathbb{k}})\varphi\dd v\Big\|_{L^1([0,T]\times \R^d\times E)}\\
    &+C\|\varphi\|_{L^\infty([0,T]\times \Do; L^1(E))}\| L(k(\hf^\sigma)-L(\tilde {\mathbb{k}})\|_{L^1([0,T]\times \Do)}\\
    \to&{}0\quad\text{as }\sigma\to0,
\end{align*}
where $L(\tilde {\mathbb{k}}) \varphi \in L^\infty([0,T]\times \Do;L^1(E))$ since $|L(\tilde{\mathbb{k}})|\le\|k\|_{L^\infty}\|A\|_{L^1}$.

Concerning the convergence of the gain term $Q^+(k(f^\sigma),k(\hf^\sigma))$ in \eqref{conv:Q:beta}, changing of variables $(v,v_*)\mapsto (v',v_*')$ implies that
\begin{align*}
\int_{\R^d}Q^+(k(f^\sigma),k(\hf^\sigma))\varphi\dd v=\int_{\Do}  k(f^\sigma)k(\hf^\sigma)_*\Gamma \dd v\dd v_*, 
\end{align*}
where $k(\hf^\sigma)_*=k(\hf^\sigma)(x,v_*)$ and $\Gamma$ is defined as
\begin{align*}
    \Gamma=\int_{S^{d-1}}B(v-v_*,\omega)\varphi(x,v-\langle v-v_*,\omega\rangle \omega)\dd\omega.
\end{align*}
Notice that $\Gamma\in L^\infty([0,T]\times\R^{3d};L^1(E))\cap L^\infty([0,T]\times\Do;L^1(\R^d_{v}\times E))$. Similar to \eqref{conv:beta:2}, we have 
\begin{align*}
    \int_{\R^d}k(\hf^\sigma)_*\Gamma\dd v_*\to \int_{\R^d}\tilde{\mathbb{k}}_*\Gamma\dd v_*\quad\text{in}\quad L^1([0,T]\times\Do\times E).
\end{align*}
Then the convergence of the gain term follows analogously to the convergence of the loss term.

\end{proof}

\section{Strong convergence results}
\label{sec:strong}

We will show that the gain term $Q^+(f^\sigma,\hf^\sigma)$ converges in measure in Subsection~\ref{sub-sec:gain} by using the smoothing effect of the operator $Q^+$. Then we will apply a general convergence result by \textcite{Lio94} to conclude that (up to a subsequence)
\begin{equation*}
    f^\sigma\to f\quad\text{in}\quad C([0,T];L^1(\Do))
\end{equation*}
and $f$ is a renormalised solution of the inhomogeneous Boltzmann equation \eqref{IBE}.

In this section, we assume that the collision kernel $B$ satisfies Assumption~\ref{CK}.

\subsection{Convergence in measure}
\label{sub-sec:gain}

\begin{theorem}
\label{thm:conv:measure}
We have
\begin{equation}
\label{conv:Q+:meas}
   Q^{+}(f^\sigma,\hf^\sigma)\to Q^+(f,\hf) \quad\text{in measure}
\end{equation}
 on $[0,T]\times \R^d\times B^{v}_R$ for all $R>0$.
\end{theorem}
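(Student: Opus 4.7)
I would follow the DiPerna--Lions approach via a double truncation, combining Lemma~\ref{conv:Q:compo} with the comparison inequality~\eqref{bdd:Q-:Q+} and the uniform entropy dissipation bound~\eqref{bdd:H-D}. For $N\in\N$ and $\delta\in(0,1)$, let $B_N(z,\omega)=B(z,\omega)\mathbf{1}_{\{|z|\le N\}}\in L^1\cap L^\infty(\R^d\times S^{d-1})$, denote by $Q^+_N$ the gain operator with kernel $B_N$, and set $\beta_\delta(z)=z/(1+\delta z)$; the latter is bounded, Lipschitz, satisfies $\beta_\delta(0)=0$ and $|\beta_\delta'(z)|\lesssim(1+z)^{-1}$, and therefore fits the hypotheses of Lemma~\ref{conv:Q:compo}. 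Decomposing
\[
Q^+(f^\sigma,\hf^\sigma)=Q^+_N(\beta_\delta(f^\sigma),\beta_\delta(\hf^\sigma))+\mathcal{R}^\sigma_{N,\delta},
\]
and similarly for $(f,\hf)$, reduces the problem to: (i) for fixed $N,\delta$, convergence in measure on $[0,T]\times\R^d\times B_R^v$ of the truncated gain term as $\sigma\to 0$; and (ii) smallness of $\mathcal{R}^\sigma_{N,\delta}$ in $L^1_{\mathrm{loc}}$ uniformly in $\sigma$ when $N$ is large and $\delta$ is small.

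\textbf{Controlling the remainder.} For (ii), the comparison inequality~\eqref{bdd:Q-:Q+} gives, for any $C>1$,
\[
\mathcal{R}^\sigma_{N,\delta}\le C\bigl[Q^-(f^\sigma,\hf^\sigma)-Q^-_N(\beta_\delta(f^\sigma),\beta_\delta(\hf^\sigma))\bigr]+\frac{2h(f^\sigma)}{\log C}.
\]
Given $\varepsilon>0$, fix $C=C(\varepsilon)$ large so that $\|h(f^\sigma)/\log C\|_{L^1([0,T]\times\Do)}\le\varepsilon/2$ uniformly in $\sigma$, via~\eqref{bdd:H-D}. The remaining $Q^-$-difference splits into a kernel-truncation piece, localised on $\{|v-v_*|\ge N\}$ where $v\in B_R^v$ forces $|v_*|\ge N-R$; a Chebyshev tail using $\|f^\sigma\|_{L^1_{0,2}(\Do)}\lesssim 1$ from Proposition~\ref{prop:moment:ent} together with $B\lesssim\langle\cdot\rangle^\mu$, $\mu\in[0,1]$, yields an $O((N-R)^{\mu-2})$ bound. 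The function-truncation piece uses the elementary inequality $0\le z-\beta_\delta(z)\le z\mathbf{1}_{\{z\ge 1/\sqrt\delta\}}+\sqrt\delta\, z$ combined with the equi-integrability of $f^\sigma$ and $\hf^\sigma$ provided by \eqref{equi-int:f-sigma} and \eqref{ineq:Phi:hf}, making the contribution small as $\delta\to 0$ uniformly in $\sigma$. The same estimates apply to the limits $f,\hf$ by lower semicontinuity.

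\textbf{Convergence of the principal term and main obstacle.} For fixed $N,\delta$, the arguments $\beta_\delta(f^\sigma),\beta_\delta(\hf^\sigma)$ are uniformly bounded by $1/\delta$ and $B_N\in L^1\cap L^\infty$, so Lemma~\ref{conv:Q:compo} applies with $k=\beta_\delta$ and yields
\[
\int_{\R^d}Q^+_N(\beta_\delta(f^\sigma),\beta_\delta(\hf^\sigma))\,\varphi\dd v\to\int_{\R^d}Q^+_N(\beta_\delta(f),\beta_\delta(\hf))\,\varphi\dd v
\]
in $L^1([0,T]\times\R^d\times E)$ for every $\varphi\in L^\infty([0,T]\times\Do;L^1(E))$. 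The principal technical difficulty is to upgrade this $L^1$-convergence of velocity averages to convergence in measure of the integrand itself on $[0,T]\times\R^d\times B_R^v$. The plan is to exploit the classical Lions--Wennberg smoothing of the bilinear gain operator: pointwise in $x$, with $\beta_\delta(f^\sigma(\cdot,\cdot))$ uniformly bounded in $L^\infty_v$ and $B_N$ of compact support in relative velocity, a Carleman-type representation of $Q^+_N$ yields additional regularity in $v$, hence compactness in the $v$-variable. Combined with the $(t,x)$-compactness delivered by the averaging lemma, this gives strong $L^1_{\mathrm{loc}}$-compactness of $\{Q^+_N(\beta_\delta(f^\sigma),\beta_\delta(\hf^\sigma))\}_\sigma$ in all variables, and the limit is identified as $Q^+_N(\beta_\delta(f),\beta_\delta(\hf))$ by testing against a dense family of $\varphi$. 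A diagonal extraction, letting $\sigma\to 0$ first and then $\delta\to 0$, $N\to\infty$, concludes the proof.
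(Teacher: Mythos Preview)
Your strategy is close in spirit to the paper's, but there is a genuine gap in the remainder control. You claim that $\mathcal{R}^\sigma_{N,\delta}$ is small in $L^1_{\mathrm{loc}}$ uniformly in $\sigma$, and you reduce this to smallness of $Q^-(f^\sigma,\hf^\sigma)-Q^-_N(\beta_\delta(f^\sigma),\beta_\delta(\hf^\sigma))$ in $L^1([0,T]\times\R^d\times B_R^v)$. But $Q^-(f^\sigma,\hf^\sigma)=f^\sigma L(\hf^\sigma)$ is \emph{not} uniformly bounded in $L^1([0,T]\times\R^d\times B_R^v)$ as $\sigma\to 0$: the only available pointwise bound is $\|\hf^\sigma\|_{L^\infty_xL^1_v}\le\|\kappa^\sigma\|_{L^\infty}\|f^\sigma\|_{L^1}$, which blows up. Concretely, in your ``kernel-truncation piece'' you are left with an integral of the form $\int_{\R^d}\big(\int_{B_R^v}f^\sigma\,dv\big)\big(\int_{|v_*|\ge N-R}\hf^\sigma\langle v_*\rangle^\mu dv_*\big)dx$, a product of two $L^1_x$ functions with no uniform control on their product; your claimed $O((N-R)^{\mu-2})$ bound does not follow. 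The same issue arises in the function-truncation piece, where $(f^\sigma-\beta_\delta(f^\sigma))L_N(\hf^\sigma)$ again involves a product at the same $x$. A secondary point: the displayed inequality $\mathcal{R}^\sigma_{N,\delta}\le C[Q^--Q^-_N(\beta_\delta,\beta_\delta)]+2h/\log C$ is not a direct consequence of \eqref{bdd:Q-:Q+}, which bounds $Q^+$ from above but gives no matching lower bound on $Q^+_N(\beta_\delta,\beta_\delta)$ in terms of $Q^-_N(\beta_\delta,\beta_\delta)$.

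The paper circumvents this by never working with $Q^\pm$ directly in $L^1_{\mathrm{loc}}$. Instead it renormalises, proving that $\frac{Q^+(f^\sigma,\hf^\sigma)}{1+L(\hf^\sigma)}$ is a Cauchy sequence in $L^1([0,T]\times\R^d\times B_R^v)$; this quotient is uniformly weakly compact (Corollary~\ref{coro:weak-cpt:quo}), and the remainder estimate (Section~\ref{sub-sec:proof-1}) reduces via \eqref{bdd:Q-:Q+} to smallness of $\frac{f^\sigma\hf^\sigma_*}{1+L(\hf^\sigma)}B$ on tail sets, which \emph{is} uniformly controlled because this quantity is weakly compact in $L^1$ (Lemma~\ref{lemma:inner:Q-}). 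The truncation there is $f^\sigma_M=(f^\sigma\wedge M)\varphi_M(x)\varphi_M(v)$, giving compact support in $(x,v)$ so that the Lions smoothing estimate (Theorem~\ref{smooth}) yields a uniform $L^2_{t,x}H^{(d-1)/2}_v$ bound; your $\beta_\delta$ truncation lacks the $x$-cutoff needed for this step. Only after obtaining strong $L^1$ convergence of the renormalised gain term does one deduce convergence in measure of $Q^+$ itself, via the separate convergence $L(\hf^\sigma)\to L(\hf)$ in $L^1_{\mathrm{loc}}$ from Lemma~\ref{ave:conv:L}. If you insert the factor $(1+L(\hf^\sigma))^{-1}$ throughout your remainder analysis and add a spatial cutoff to your truncation, your outline becomes essentially the paper's argument.
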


For any $M>0$, we define the regularised sequence $\{f^\sigma_M\}$ and $\{\hf^\sigma_M\}$ in the following way
\begin{align*}
    &f^\sigma_M\defeq (f^\sigma\wedge M)\varphi_M(v)\varphi_M(x)\quad\text{and}\quad \hf^\sigma_M\defeq (\hf^\sigma\wedge M)\varphi_M(v)\varphi_M(x),
\end{align*}
where $\varphi\in C_c^\infty(\R^d)$, $0\le \varphi\le1$, $\varphi=1$ on $B_1(0)$, $\varphi=0$ on $B_2(0)$, and $\varphi_M\defeq \varphi(\frac{\cdot}{M})$. 

By definition, we have 
\begin{align*}
    &0\le f^\sigma_M,\,\hf^\sigma_M\le M,\quad \supp(f^\sigma_M,\,\hf^\sigma_M)\subset [0,T]\times B^{x,v}_{2M}.
\end{align*}

To show Theorem~\ref{thm:conv:measure}, we will prove the following statements in Subsection~\ref{sub-sec:proof-1} and Subsection~\ref{sub-sec:proof-2}.

\begin{enumerate}

    \item We show that 
    \begin{equation}
    \label{conv:f-sigma-M:M}
        \frac{Q^+( f^\sigma_M,\hf^\sigma_M)}{1+L(\hf^\sigma)}\to\frac{Q^{+}( f^\sigma,\hf^\sigma)}{1+L(\hf^\sigma)}\quad\text{in}\quad L^1([0,T]\times\Do)
    \end{equation}
    as $M\to\infty$ uniformly in $\sigma\in(0,1)$. 
    \item  Since $0\le f^\sigma_M\le f^\sigma$ and  $0\le \hf^\sigma_M\le \hf^\sigma$, we denote $\tilde f_M$ and  $\tilde \hf_M$ by
    the weak limits of $f^\sigma_M$ and $\hf^\sigma_M$ in $L^1([0,T]\times\Do)$.
   For any fixed $M>0$, we show
    \begin{equation}
    \label{conv:f-sigma-M:sigma}
        Q^+(f^\sigma_M,\hf^\sigma_M)\to Q^+(\tilde f_M,\tilde\hf_M)\quad\text{in}\quad L^1([0,T]\times \Do)
    \end{equation}
    as $\sigma\to0$ for all $R>0$. 
\end{enumerate}

Now we are ready to show \eqref{conv:Q+:meas}. 

For a fixed $M>0$. The convergence  \eqref{conv:f-sigma-M:sigma} implies that
\begin{equation}
    \label{conv:f-sigma-12-M}
    \Big\{\frac{Q^+( f^{\sigma}_M,\hf^{\sigma}_M)}{1+L(\hf^{\sigma})}\Big\}\quad\text{in a Cauchy sequence in }{L^1([0,T]\times \R^d\times B^v_R},
\end{equation}
where we use $L(\hf^{\sigma})\to L(\hf)$ in $L^1([0,T]\times \R^d\times B^v_R)$ as in Lemma~\ref{ave:conv:L}, and 
\begin{equation*}
\begin{aligned}
&\Big\|\frac{Q^+(f^{\sigma_1}_M,\hf^{\sigma_1}_M)}{1+L(\hf^{\sigma_1})}-\frac{Q^+(f^{\sigma_2}_M,\hf^{\sigma_2}_M)}{1+L(\hf^{\sigma_2})}\Big\|_{L^1([0,T]\times \R^d\times B^{v}_R)} \\
\le&{}\Big\|\frac{Q^+( f^{\sigma_1}_M,\hf^{\sigma_1}_M)}{1+L(\hf^{\sigma_1})}-\frac{Q^+( f^{\sigma_2}_M,\hf^{\sigma_2}_M)}{1+L(\hf^{\sigma_1})}\Big\|_{L^1}+\Big\|\frac{Q^+(f^{\sigma_2}_M,\hf^{\sigma_2}_M)}{1+L(\hf^{\sigma_1})}-\frac{Q^+(f^{\sigma_2}_M,\hf^{\sigma_2}_M)}{1+L(\hf^{\sigma_2})}\Big\|_{L^1}\\
\le&{}\|Q^+(f^{\sigma_1}_M,\hf^{\sigma_1}_M)-Q^+(f^{\sigma_2}_M,\hf^{\sigma_2}_M)\|_{L^1}+\underbrace{\|Q^+(f^{\sigma_2}_M,\hf^{\sigma_2}_M)\|_{L^\infty}}_{\le M^2\|A\|_{L^\infty(B_{2M})}}\|L(\hf^{\sigma_1})-L(\hf^{\sigma_2})\|_{L^1}\\
\to&{}0\quad\text{as }\sigma_1,\,\sigma_2\to0.
\end{aligned}
\end{equation*}

We combine \eqref{conv:f-sigma-M:M} and \eqref{conv:f-sigma-12-M} to derive 
\begin{align*}
&\Big\{\frac{Q^+(f^{\sigma},\hf^{\sigma})}{1+L(\hf^{\sigma})}\Big\}\quad\text{is a Cauchy sequence in 
}L^1([0,T]\times \R^d\times B^{v}_R).
\end{align*}

We recall from Lemma~\ref{lemma:inner:Q-} that
\begin{align*}
    \frac{Q^{\pm}(f^\sigma,\hf^\sigma)}{1+L(\hf^\sigma)}\rightharpoonup  \frac{Q^{\pm}(f,\hf)}{1+L(\hf)}\quad\text{in}\quad L^1([0,T]\times\Do). 
\end{align*}

 Then  the uniqueness of the weak and strong limits implies
\begin{equation}
\label{strong:L1:sigma}
    \frac{Q^{+}(f^\sigma,\hf^\sigma)}{1+L(\hf^\sigma)}\to  \frac{Q^{+}(f,\hf)}{1+L(\hf)}\quad\text{in}\quad L^1([0,T]\times  \R^d\times B^{v}_R). 
\end{equation}

Moreover, since $L(f^\sigma)\to L(\hf)$ in $L^1([0,T]\times \R^d\times B^{v}_R)$, we derive the convergence \eqref{conv:Q+:meas} 
\begin{equation*}
    Q^{+}(f^\sigma,\hf^\sigma)\to Q^{+}(f,\hf)\quad\text{in measure on}\quad [0,T]\times  \R^d\times B^{v}_R. 
\end{equation*}

\subsubsection{Proof of  \eqref{conv:f-sigma-M:M}}
\label{sub-sec:proof-1}
We first observe 
 \begin{align*}
 &0\le f^\sigma - (f^\sigma\wedge M)\le f^\sigma\mathbb{1}_{\{f^\sigma>M\}}\quad\text{and}\quad 0\le \hf^\sigma - (\hf^\sigma\wedge M)\le \hf^\sigma\mathbb{1}_{\{\hf^\sigma>M\}},
 \end{align*}
which implies $0\le Q^+( f^\sigma_M,\hf^\sigma_M)\le Q^+(f^\sigma,\hf^\sigma)$.

By definition of $\varphi_M$, we have 
\begin{equation}
\label{diff:Q+:f-sigma-M}
\begin{aligned}
0
\le &{} \int_0^T\int_{\Do}\frac{Q^+( f^\sigma,\hf^\sigma)}{1+L(\hf^\sigma)}-\frac{Q^+(f^\sigma_M,\hf^\sigma_M)}{1+L(\hf^\sigma)}\dd x\dd v\dd t\\
\le &{}\int_0^T\int_{\Do}\frac{Q^+(f^\sigma,\hf^\sigma)}{1+L(\hf^\sigma)}\mathbb{1}_{\{|x|>M\}}\dd x\dd v\dd t\\
&+\int_0^T\int_{\R^{3d}\times S^{d-1}}\frac{( \hf^\sigma)'_*(f^\sigma)'}{1+L(\hf^\sigma)}(\mathbb{1}_{\{|v'|>M\}}+\mathbb{1}_{\{|v_*'|>M\}})B\dd x\dd v\dd v_*\dd t\\
&+\int_0^T\int_{\R^{3d}\times S^{d-1}}\frac{(\hf^\sigma)_*'(f^\sigma)'}{1+L(\hf^\sigma)}\Big(\mathbb{1}_{\{(\hf^\sigma)_*'+(f^\sigma)'>M\}}\Big)B\dd x\dd v\dd v_*\dd t,
\end{aligned}
\end{equation}
where $(\hf^\sigma)_*'=\hf^\sigma(x,v_*')$ and $(f^\sigma)'=f^\sigma(x,v')$. 

The weak compactness of $\frac{Q^+(f^\sigma,\hf^\sigma)}{1+L(\hf^\sigma)}$ ensures the smallness of the first term on the right-hand side of the inequality.

To estimate the last two terms on the right-hand side, we compare $(f^\sigma)'(\hf^\sigma)'_*$ and $L(\hf^\sigma)$. We first recall the comparison inequality \eqref{relation:f':f}, for any constant $C>0$, 
\begin{align*}
    &(\hf^\sigma)'_*(f^\sigma)'\le C f^\sigma\hf^\sigma_*\\
    &\quad+(\log C)^{-1}\Big(\big((f^\sigma)'_*(f^\sigma)'-f^\sigma_*f^\sigma\big)\log\frac{(f^\sigma)'_*(f^\sigma)'}{f^\sigma_*f^\sigma}\Big)*_x \kappa^\sigma,
\end{align*}
where we have the uniform bound
\begin{align*}
   \int_0^T\int_{\R^{4d}\times S^{d-1}}\kappa^\sigma B\big((f^\sigma)'_*(f^\sigma)'-f^\sigma_*f^\sigma\big)\log\frac{(f^\sigma)'_*(f^\sigma)'}{f^\sigma_*f^\sigma}=\int_0^T\cD(f^\sigma)\dd t\le C
\end{align*}
as in Proposition \ref{prop:moment:ent}. Since one can choose the constant $C>0$ arbitrarily large, to show the vanishing of \eqref{diff:Q+:f-sigma-M}, we only need to show
    \begin{align}
&\int_0^T\int_{\R^{3d}\times S^{d-1}}\frac{f^\sigma\hf^\sigma_*}{1+L(\hf^\sigma)}(\mathbb{1}_{\{|v'|>M\}}+\mathbb{1}_{\{|v_*'|>M\}})B\to0\quad\text{and}\label{con:rhs:sigma-M:1}\\
&\int_0^T\int_{\R^{3d}\times S^{d-1}}\frac{f^\sigma\hf^\sigma_*}{1+L(\hf^\sigma)}\Big(\mathbb{1}_{\{(\hf^\sigma)_*'+(f^\sigma)'>M\}}\Big)B\to0  \label{con:rhs:sigma-M:2}  
    \end{align}
as $M\to\infty$ uniformly in $\sigma\in(0,1)$.

We recall Lemma~\ref{lemma:inner:Q-}
that 
\begin{equation*}
    \frac{ f^\sigma\hf^\sigma_*}{1+L(\hf^\sigma)}B\rightharpoonup\frac{ f\hf_*}{1+L(\hf)}B\quad\text{in }L^1([0,T]\times\R^{3d}\times S^{d-1}).
\end{equation*}
We show \eqref{con:rhs:sigma-M:1} and \eqref{con:rhs:sigma-M:2} by the Dunford-Pettis theorem~\ref{thm:DP}. By  using of the energy conservation law $|v|^2+|v_*|^2=|v'|^2+|v_*'|^2$, we have 
\begin{equation*}
\label{Omega:decom}
\begin{aligned}
\{|v'|>M\}\cup\{|v'_*|>M\}\subset \{|v|>M/\sqrt 2\}\cup\{|v_*|>M/\sqrt 2\}.
\end{aligned}
\end{equation*}
Thus, we have the convergence \eqref{con:rhs:sigma-M:1} as $M\to+\infty$. 

As a consequence of \eqref{con:rhs:sigma-M:1}, to show \eqref{con:rhs:sigma-M:2} we only need to show for any fixed $R>0$, we have
\begin{align*}
    \int_0^T\int_{\{|v'|,|v_*'|\le R\}}\frac{f^\sigma\hf^\sigma_*}{1+L(\hf^\sigma)}\Big(\mathbb{1}_{\{(\hf^\sigma)_*'+(f^\sigma)'>M\}}\Big)B\to0  \end{align*}
    as $M\to+\infty$.
We note that
\begin{equation}
\label{Omega:decom-2}
\{|v'|\le R\}\cap\{|v'_*|\le R\}\subset \{|v|\le{\sqrt2}R\}\cap\{|v_*|\le{\sqrt2}R\}.
\end{equation}
By changing of variables $(v',v_*')\mapsto(v,v_*)$, we have
\begin{equation*}
\label{Sec8:B}
\begin{aligned}
&\int_0^T\int_{\R^{3d}\times S^{d-1}} \Big(\mathbb{1}_{\{(f^\sigma)'+(\hf^\sigma)'_*>M\}}\Big)\mathbb{1}_{\{|v'|,|v'_*|\le \sqrt2R\}}\\
    \le&{} \frac{T}{M}\int_{\R^{3d}\times S^{d-1}} (f^\sigma+\hf^\sigma_*)\mathbb{1}_{\{|v'|,|v'_*|\le \sqrt2R\}}\\
    \lesssim&{} \frac{R^d}{M}\to0\quad\text{as}\quad M\to\infty,
\end{aligned}
\end{equation*}
which implies \eqref{con:rhs:sigma-M:2}. 

Thus, the convergence \eqref{conv:f-sigma-M:M} holds.

\subsubsection{Proof of \eqref{conv:f-sigma-M:sigma}}
\label{sub-sec:proof-2}
    
   By definition $\supp(\hf^\sigma_M),\,\supp(f^\sigma_M)\subset [0,T]\times B^{x,v}_{2M}$. Combining with \eqref{Omega:decom-2}, we have 
    \begin{equation}
\supp\big((\hf^\sigma_M)'\big),\, \supp\big((f^\sigma_M)'\big)\subset[0,T]\times B_{2M}^x\times B^v_{2\sqrt2 M}\times S^{d-1}.
    \end{equation}
Since  $(f^\sigma_M)'(\hf^\sigma_M)_*'$ has compact supports in $v$ and $v_*$, without loss of generality, we assume the collision kernel $B$ has compact support and $B\in L^1\cap L^\infty(\R^d\times S^{d-1})$. We approximate $B$ by $B^\varepsilon\in C^\infty_c(\R^d\times S^{d-1})$ and $B_\varepsilon$ vanishes near $0$ such that
\begin{equation*}
    \|B-B^\varepsilon\|_{L^1(\R^d\times S^{d-1})}\to 0\quad\text{as} \quad \varepsilon\to0.
\end{equation*}

We recall that $\tilde f_M$ and  $\tilde \hf_M$ are 
    the weak limits of $f^\sigma_M$ and $\hf^\sigma_M$ in $L^1([0,T]\times\Do)$. To show \eqref{conv:f-sigma-M:sigma}, we only need to show
\begin{equation*}
Q^+_\varepsilon(f^\sigma_M,\hf^\sigma_M)\defeq \int_{\R^d\times S^{d-1}}(f^\sigma_M)'(\hf^\sigma_M)_*'B^\varepsilon\dd v_*\dd\omega\to Q^+_\varepsilon(\tilde f_M,\tilde\hf_M) 
\end{equation*}
in $L^1([0,T]\times \Do)$, since
\begin{align*}
    &\|Q^+(f^\sigma_M,\hf^\sigma_M)-Q^+_\varepsilon(f^\sigma_M,\hf^\sigma_M)\|_{L^1([0,T]\times\Do)}\le C(M)\|B-B^\varepsilon\|_{L^1(\R^d\times S^{d-1})}.
\end{align*}
Hence, we only need to show \eqref{conv:f-sigma-M:sigma} for collision kernels $B\in C_c^\infty(\R^d\times S^{d-1})$.

Since $(f^\sigma_M)'(\hf^\sigma_M)_*'$ is bounded with compact support, we have
    \begin{align*}
        \|Q^+(f^\sigma_M,\hf^\sigma_M)\|_{L^\infty([0,T];L^p(\Do))}\le C(M),\quad p=1,\infty .
    \end{align*}
  Let $\rho\in\C^\infty_c(\R^d;\R_+)$, $\int_{\R^d}\rho=1$ and $\rho^\delta\defeq \delta^{-d}\rho(\frac{\cdot}{\delta})$. We approximate  $Q^{+}(f^\sigma_M,\hf^\sigma_M)$ by
    \begin{align*}
     Q^{+}_\delta(f^\sigma_M,\hf^\sigma_M) \defeq Q^{+}(f^\sigma_M,\hf^\sigma_M) *_v \rho^\delta. 
    \end{align*}

As a consequence of velocity averaging theorem (Lemma~\ref{conv:Q:compo}),
for any fixed $\delta\in(0,1)$, the convergence \eqref{conv:f-sigma-M:sigma} holds with respect to $Q^{+}_\delta(f^\sigma_M,\hf^\sigma_M)$ i.e. for any fixed $M>0$,
    \begin{equation}
        \label{conv:M:sig}
 \begin{aligned}
 Q^{+}_\delta(f^\sigma_M,\hf^\sigma_M) \to Q^+_\delta(\tilde f_M,\tilde\hf_M)\quad \text{in}\quad L^1([0,T]\times \Do)
    \end{aligned}
      \end{equation}
      as $\sigma\to0$.
  
By definition, we have 
\begin{align}
    Q^{+}(f_M,\hf_M)*_v\rho^\delta\to Q^{+}(\tilde f_M,\tilde\hf_M)\quad\text{in}\quad L^p([0,T]\times\Do) \label{con:Q+-delta}
\end{align}  
as $\delta\to0$ for all $p\in[1,\infty)$.
Hence, to show 
\eqref{conv:f-sigma-M:sigma}, we only need to show 
\begin{align}
    Q^{+}(f^\sigma_M,\hf^\sigma_M) *_v \rho^\delta\to Q^{+}(f^\sigma_M,\hf^\sigma_M)\quad\text{in}\quad L^1([0,T]\times\Do)\label{conv:uni:delta}
\end{align}
as $\delta\to0$ uniformly in $\sigma\in(0,1)$, which is a direct consequence of the smoothing effect of $Q^+$.


    Indeed, since $Q^+(f^\sigma_M,\hf^\sigma_M)$ has compact support in $x$ and $v$, to show \eqref{conv:uni:delta} we only need to show that 
\begin{align*}
Q^+_\delta(f^\sigma_M,\hf^\sigma_M)\to  Q^{+}(f^\sigma_M,\hf^\sigma_M)\quad\text{in}\quad L^2([0,T]\times\Do)
\end{align*}  
as $\delta\to0$ uniformly in $\sigma$.
We define the Fourier transform in variable $v$
\begin{equation*}
  \cF( f)(\xi)\defeq \int_{\R^d}f(v)e^{-i\xi\cdot v}\dd v.  
 \end{equation*}
By using the Plancherel theorem,
 we only need to show
\begin{align*}
   &\|\cF_v Q^+_\delta(f^\sigma_M,\hf^\sigma_M)-\cF_v Q^{+}(f^\sigma_M,\hf^\sigma_M)\|^2_{ L^2([0,T]\times\Do)}\\
   =&\int_0^T\int_{\Do}\cF_v( Q^+(f^\sigma_M,\hf^\sigma_M))^2(1-\cF_v\rho(\delta \xi))^2\dd x\dd\xi \dd t\to0
\end{align*} 
as $\delta\to0$ uniformly in $\sigma$.
Since $\|\rho^\delta\|_{L^1(\R^d)}=1$, we only need to show that
\begin{equation}
\label{uni:vas}
  \int_0^T\int_{\Do}\cF_v( Q^+(f^\sigma_M,\hf^\sigma_M))^2\mathbb{1}_{\{|\xi|\ge R\}}\dd x\dd\xi \dd t\to0
\end{equation} 
as $R\to\infty$ uniformly in $\sigma$. 

We define the Sobolev spaces
    \begin{align*}
      H^{s}(\R^d)\defeq \{f\in \cD'(\R^d)\mid \langle \xi\rangle^s \cF(f)(\xi)\in L^2(\R^d)\}.
    \end{align*}
    If $Q^+(f^\sigma_M,\hf^\sigma_M)$ is uniformly bounded in $ {L^2((0,T)\times\R^d;H^{\frac{d-1}{2}}(\R^d))}$
\begin{equation}
    \label{SE}
\begin{aligned}
    &\|Q^+(f^\sigma_M,\hf^\sigma_M)\|^2_{L^2((0,T)\times\R^d;H^{\frac{d-1}{2}}(\R^d))}\\
    =&{}\int_0^T\int_{\Do}\cF_v(Q^+(f^\sigma_M,\hf^\sigma_M))^2\langle \xi\rangle^{\frac{d-1}{2}}\dd x\dd\xi \dd t\le C,
\end{aligned}
\end{equation}
where $C$ is independent of $\sigma$. Then we have the desired uniform convergence \eqref{uni:vas}
\begin{align*}
  \int_0^T\int_{\Do}\cF_v( Q^+( f^\sigma_M,\hf^\sigma_M))^2\mathbb{1}_{\{|\xi|\ge R\}}\dd x\dd\xi \dd t\le \frac{C}{1+R^{d-1}}\to0
\end{align*} 
uniformly in $\sigma$. Thus, we have the convergence \eqref{conv:uni:delta}.

Indeed, the following lemma of the smoothing effect of the gain term ensures the uniform Sobolev bound \eqref{SE}, where we treat $t$ and $x$ as parameters.
\begin{theorem}[\cite{Lio94}, Part I, Theorem 4.1]\label{smooth}
Let $B(v,w)\in C^\infty(\R^d\times S^{d-1})$  be a collision kernel such that $B(v,w)$ depends only on $|v|$ and $|\langle v,w\rangle|$, $B$ vanishes when $|v|$ near $0$ and $+\infty$ uniformly in $w$, and also vanishes when $|\langle v,w\rangle|$ near $0$ and $|v|$.
   We define
   \begin{equation*}
       Q^+(g,f)(v)\defeq \int_{\R^d\times S^{d-1}}B(v-v_*)f(v_*')g(v')\dd v_*\dd\omega.
   \end{equation*}
  For all $f\in L^1(\R^d)$ and $g\in L^2(\R^d)$, we have
   \begin{equation*}
  \|Q^+(g,f)\|_{H^{\frac{d-1}{2}}(\R^d)}\le C \|f\|_{L^1(\R^d)}\|g\|_{L^2(\R^d)}. 
   \end{equation*}
    \end{theorem}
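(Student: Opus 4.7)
The plan is to prove the estimate via Fourier analysis in $v$, exploiting the Radon-transform-like structure of the gain operator. The key insight is that $Q^+(g,f)(v)$ can be viewed as an integral of $f\cdot g$ over $(d-1)$-dimensional surfaces (parametrised by $\omega\in S^{d-1}$), and the curvature of these surfaces produces Sobolev smoothing of order $(d-1)/2$, the same phenomenon that underlies Strichartz-type estimates and the Fourier decay of surface measures on spheres.

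First I would apply a Carleman-type change of variables to rewrite the gain operator. For fixed $v$, one changes from $(v_*,\omega)\in\R^d\times S^{d-1}$ to the outgoing velocities $(v',v_*')$, constrained by the orthogonality condition $\langle v'-v,\,v_*'-v\rangle=0$ inherited from the collision rules. Under the hypotheses on $B$, which cut off near $|v|=0$, $|v|=\infty$ and the grazing/aligned angular configurations, the Jacobian of this change is smooth and bounded, and the integration is supported in a compact region away from the singular locus. This recasts $Q^+(g,f)(v)$ as an integral over $v'\in\R^d$ of $g(v')$ times the integral of $f$ along the affine hyperplane orthogonal to $v'-v$ passing through $v'$, weighted by a smooth compactly supported amplitude.

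Next I would take the Fourier transform in $v$ and write
$$\widehat{Q^+(g,f)}(\xi) \;=\; \int_{\R^d}\hat g(\eta)\,\hat f(\xi-\eta)\,M(\xi,\eta)\,\dd\eta,$$
where the multiplier $M(\xi,\eta)$ arises as an oscillatory integral on $S^{d-1}$ with smooth compactly supported amplitude coming from $B$. Because the cutoffs guarantee non-degeneracy of the phase along the relevant portion of the sphere, the classical stationary-phase estimate (equivalently, the decay of the Fourier transform of the spherical surface measure) yields
$$|M(\xi,\eta)| \;\lesssim\; \langle|\xi|\rangle^{-(d-1)/2}\,\Psi(\xi,\eta),$$
with $\Psi$ bounded and uniformly integrable in $\eta$. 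Multiplying by $\langle\xi\rangle^{(d-1)/2}$, applying Cauchy--Schwarz in $\eta$ and then Plancherel in $\xi$, and using $\|\hat f\|_{L^\infty}\le\|f\|_{L^1}$ together with $\|\hat g\|_{L^2}=\|g\|_{L^2}$, produces the desired bound.

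The main obstacle, and the technical core of the argument, is verifying the decay of $M(\xi,\eta)$ with constants depending only on the support of $B$. One must identify the stationary points of the $\omega$-phase on $S^{d-1}$, check that the Hessian has the expected non-vanishing rank on the support of the amplitude, and track the amplitude through the stationary-phase expansion. The geometric vanishing hypotheses on $B$ in the statement are precisely what ensures that no stationary point lies at the boundary of the support, preventing tangential losses and yielding the clean Sobolev gain of $(d-1)/2$.
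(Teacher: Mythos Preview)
The paper does not prove this theorem. It is quoted from \cite{Lio94} (Part I, Theorem~4.1) and invoked as a black box to obtain the uniform Sobolev bound on $Q^+(f^\sigma_M,\hf^\sigma_M)$; there is no argument in the present paper to compare your proposal against.

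That said, your sketch is faithful to the strategy of Lions' original proof: after a Carleman-type change of variables the gain operator is recognised as a Fourier integral operator associated to reflection through spheres, and the $(d-1)/2$ smoothing comes from stationary phase on $S^{d-1}$ (equivalently, the $L^2$ Sobolev mapping properties of non-degenerate FIOs). The vanishing hypotheses on $B$ are exactly what keep the amplitude smooth and compactly supported away from the degenerate set, as you note.

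One caution on the details: the representation
\[
\widehat{Q^+(g,f)}(\xi)=\int_{\R^d}\hat g(\eta)\,\hat f(\xi-\eta)\,M(\xi,\eta)\,\dd\eta
\]
with a scalar multiplier $M(\xi,\eta)$ is not quite what emerges. The operator is bilinear, and after Fourier transforming in $v$ one does not get a clean convolution with a multiplier independent of $f$; rather, for fixed $f$ one treats $g\mapsto Q^+(g,f)$ as a linear FIO whose amplitude incorporates $f$ (or $\hat f$) in a less separable way, and $\|f\|_{L^1}$ enters through a direct $\sup$ bound on that amplitude. This does not alter the strategy or the outcome, but the stationary-phase bookkeeping is more involved than your formula suggests; Lions works directly with the FIO calculus rather than an explicit multiplier identity.
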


We apply Theorem \ref{smooth} to $Q^+(f^\sigma_M,\hf^\sigma_M)$. Since $f^\sigma_M$ and $\hf^\sigma_M$ are bounded with compact support, we have the uniform bound  
\begin{align*}
    &\|Q^+(f^\sigma_M,\hf^\sigma_M)\|^2_{L^2((0,T)\times\R^d;H^{\frac{d-1}{2}}(\R^d))}\\
    \lesssim&{}\int_0^T\int_{\R^d}\|f^\sigma_M(v)\|_{L^1(\R^d)}^2\|\hf^\sigma_M(v)\|_{L^2(\R^d)}^2\dd x\dd t\\
    \le&{} C(M,d)\quad \forall \sigma\in(0,1).
\end{align*}

\subsection{Strong convergence}
We will apply the following general convergence Theorem \ref{thm:conv:strong} to the fuzzy Boltzmann \eqref{FBE} corresponding to  $\kappa^\sigma$ to obtain the strong convergence (up to a subsequence)
\begin{equation*}
    f^\sigma\to f\quad\text{and}\quad \hf^\sigma\to f\quad\text{in} \quad C([0,T];L^1(\Do)).
\end{equation*}
Moreover, we show that $f$ is indeed a renormalised solution of the inhomogeneous Boltzmann equation  \eqref{IBE}.
\begin{theorem}[\cite{Lio94}, Part II, Theorem 3.2]
\label{thm:conv:strong}
Let $(h^n)\subset C([0,T];L^1(\R^k))$ be a sequence of solutions of
\begin{equation}
\label{transport}
    (\d_t+a(y)\cdot\nabla_y)h^n=H^n-l^nh^n,\quad h^n|_{t=0}=h_0,\quad\text{in}\quad \R^k\times(0,T),
\end{equation}
where $a:\R^k\to\R^k$ satisfies
\begin{equation*}
    |a(y)-a(z)|\le C|y-z|\quad\forall y,\,z\in\R^k.
\end{equation*}
    If
    \begin{enumerate}
        \item For any $t\in[0,T]$, $(h^n(t))$ is relatively weakly compact in $L^1(\R^k)$;
        \item For all $R>0$, we have $H^n\mathbb{1}_{\{|h^n|\le R\}}$ is relatively weakly compact in $L^1((0,T)\times B_R^y)$;
        \item $H^n\ge0$ converges a.e. on $\R^k\times(0,T)$;
        \item For all $R>0$, we have $l^n$ converges in $L^1((0,T)\times B_R^y)$,
    \end{enumerate}
    then $h^n$ converges strongly in $C([0,T];L^1(\R^k))$.
\end{theorem}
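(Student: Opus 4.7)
The plan is to establish strong convergence in $C([0,T];L^1(\R^k))$ by combining pointwise a.e.\ convergence along characteristics with uniform equi-integrability, and then upgrading through equicontinuity in time. Since $a$ is Lipschitz, the flow $Y(t;s,y)$ solving $\dot Y=a(Y)$ with $Y(s;s,y)=y$ is well defined, and setting $(h^n)^\sharp(t,y)\defeq h^n(t,Y(t;0,y))$ reduces \eqref{transport} to the scalar ODE $\tfrac{d}{dt}(h^n)^\sharp = (H^n)^\sharp - (l^n)^\sharp(h^n)^\sharp$ with $(h^n)^\sharp|_{t=0}=h_0$, which Duhamel solves as
\begin{equation*}
(h^n)^\sharp(t,y) = h_0(y)\exp\Big(-\int_0^t (l^n)^\sharp\dd s\Big) + \int_0^t (H^n)^\sharp(s,y)\exp\Big(-\int_s^t(l^n)^\sharp\dd\tau\Big)\dd s.
\end{equation*}
I would work throughout in these characteristic coordinates.

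By condition (4), $l^n\to l$ in $L^1_{\loc}$, so up to a subsequence $(l^n)^\sharp\to l^\sharp$ a.e.\ along the flow, and the integrated exponential factors converge pointwise a.e.\ (since they are bounded by $1$ when $l^n\ge 0$, and in general via the uniform $L^1_{\loc}$ control). Condition (3) supplies a.e.\ convergence of $(H^n)^\sharp\ge 0$. The issue is that $H^n$ is only weakly compact, not a.e.\ dominated. I would therefore decompose $H^n = H^n\mathbb{1}_{\{|h^n|\le R\}} + H^n\mathbb{1}_{\{|h^n|>R\}}$: condition (2) makes the first piece equi-integrable, so a.e.\ convergence together with Vitali upgrades it to strong $L^1_{\loc}$ convergence, and dominated convergence lets the corresponding Duhamel integral pass to the limit. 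This produces a truncated a.e.\ limit $h^\sharp_R$ of $(h^n)^\sharp$.

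Letting $R\to\infty$ requires uniform smallness of $\int_0^t (H^n)^\sharp \mathbb{1}_{\{|h^n|>R\}}\dd s$. For this, I would use renormalization: applying a bounded, concave $\beta_R$ (e.g.\ $\beta_R(z)=z\wedge R$) to the ODE yields a chain-rule identity which, combined with condition (1) and Dunford--Pettis, propagates equi-integrability of $\{h^n(t,\cdot)\}$ uniformly in $t$. This forces the tail mass on $\{|h^n|>R\}$ to vanish uniformly as $R\to\infty$, closing the limit and yielding a.e.\ convergence $(h^n)^\sharp\to h^\sharp$, hence $h^n\to h$ a.e. Vitali then upgrades this to $h^n(t,\cdot)\to h(t,\cdot)$ strongly in $L^1(\R^k)$ for each $t$.

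Finally, to pass from pointwise-in-$t$ $L^1$ convergence to $C([0,T];L^1)$, I would prove equicontinuity of $\{t\mapsto h^n(t,\cdot)\}$ in $L^1(B^y_R)$ directly from \eqref{transport}: $\|h^n(t)-h^n(s)\|_{L^1(B_R)}$ is bounded by the integrals of $|H^n|$ and $|l^n h^n|$ over $[s,t]\times B_{R+C|t-s|}$, both uniformly controlled via (2), (4) and the already-established equi-integrability. Tightness uniform in $n$, obtained by propagating condition (1) along the Lipschitz flow, removes the restriction to $B_R^y$, and Arzel\`a--Ascoli concludes. The main obstacle is exactly the $R\to\infty$ step: condition (2) furnishes compactness of $H^n$ only on level sets of $h^n$, so one must couple equi-integrability of $h^n$ with truncated weak compactness of $H^n$ in a self-consistent bootstrap. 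This interaction of all four hypotheses is the technical heart of the theorem.
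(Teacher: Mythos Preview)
The paper does not prove this statement; it is quoted from \cite{Lio94}, Part II, Theorem~3.2, and applied directly to obtain Corollary~\ref{strong:conv}. There is therefore no proof in the paper to compare against. Your outline is in the spirit of Lions' original argument: Duhamel along the Lipschitz flow, passage to the limit in the exponential factors via condition~(4), control of the source term through the truncation in condition~(2), closing with equi-integrability from condition~(1) and equicontinuity in time.

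One step as written does not go through. You claim that equi-integrability of $H^n\mathbb{1}_{\{|h^n|\le R\}}$ (condition~(2)) plus a.e.\ convergence of $H^n$ (condition~(3)) lets Vitali give strong $L^1_{\loc}$ convergence of the product; but $\mathbb{1}_{\{|h^n|\le R\}}$ has no a.e.\ limit before $h^n$ does, so the product has no identified a.e.\ limit and Vitali is not applicable at that stage. In Lions' argument the circularity is broken differently: with $H^n\ge 0$ (and, in the intended application, $l^n\ge 0$ and $h_0\ge 0$), the full Duhamel integral is bounded pointwise by $(h^n)^\sharp$ itself, so condition~(1) makes it equi-integrable in $y$; a.e.\ convergence of the integrand then gives the correct $\liminf$ by Fatou, while condition~(2) is used on the $\limsup$ side by exploiting that $\{|h^n|>R\}$ has uniformly small measure. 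You correctly flag exactly this interaction as ``the technical heart'' in your final paragraph --- but the earlier Vitali shortcut should be replaced by this Fatou-plus-small-set argument rather than presented as a direct consequence.
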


We write the fuzzy Boltzmann equations in the form \eqref{transport} by letting
\begin{align*}
&\R^k=\Do,\quad y=(x,v),\quad a(y)=(0,v),\\
&h^n=f^{\sigma_n},\quad H^n=Q^{+}(f^{\sigma_n},\hf^{\sigma_n}),\quad l^n=L(\hf^{\sigma_n}).
\end{align*}
By directly applying Theorem \ref{thm:conv:strong}, we have the following convergence results.
\begin{corollary}
\label{strong:conv}
Let $f^\sigma\in C([0,T]\times\Do)$ be weak solutions of \eqref{FBE} corresponding to $\kappa^\sigma$. Let $f\in C([0,T]\times L^1(\Do))$ be the weak limit (up to a subsequence) of $f^\sigma$ in $L^1([0,T]\times\Do)$. Then we have
\begin{equation*}
    f^\sigma\to f\quad\text{in} \quad C([0,T];L^1(\Do)).
\end{equation*}
\end{corollary}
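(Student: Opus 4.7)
The strategy is to apply the general convergence Theorem~\ref{thm:conv:strong} to the fuzzy Boltzmann equation \eqref{FBE} rewritten in transport-with-source form. Since $Q(f^\sigma,\hf^\sigma)=Q^+(f^\sigma,\hf^\sigma)-f^\sigma L(\hf^\sigma)$, the equation reads
\begin{equation*}
(\d_t+v\cdot\nabla_x)f^\sigma=Q^+(f^\sigma,\hf^\sigma)-L(\hf^\sigma)f^\sigma,\qquad f^\sigma|_{t=0}=f_0,
\end{equation*}
which fits the abstract template in Theorem~\ref{thm:conv:strong} with $y=(x,v)\in\R^{2d}$, $a(y)=(0,v)$ (Lipschitz), $h^n=f^{\sigma_n}$, $H^n=Q^+(f^{\sigma_n},\hf^{\sigma_n})\ge 0$ and $l^n=L(\hf^{\sigma_n})$, all with common initial datum $h_0=f_0$. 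It then remains to verify the four hypotheses (1)--(4) of Theorem~\ref{thm:conv:strong}.

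For (1), the weak relative compactness of $\{f^{\sigma_n}(t)\}$ in $L^1(\Do)$ for every $t\in[0,T]$ follows from Lemma~\ref{lemma:f}, together with the uniform-in-$t$ estimates \eqref{bdd:L122} and the entropy bound \eqref{bdd:H-D} which were used in that lemma to check the Dunford--Pettis criteria. For (4), the $L^1([0,T]\times\R^d\times B_R^v)$-convergence $L(\hf^{\sigma_n})\to L(\hf)$ for every $R>0$ is exactly the content of Lemma~\ref{ave:conv:L}. For (3), non-negativity of $H^n$ is immediate, and pointwise almost-everywhere convergence follows from the convergence in measure established in Theorem~\ref{thm:conv:measure} by passing to a further subsequence.

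For (2), I would argue as follows. Fix $R>0$ and $\alpha\in(0,1)$. On $\{f^{\sigma_n}\le R\}$ one has $1+\alpha f^{\sigma_n}\le 1+\alpha R$, hence
\begin{equation*}
Q^+(f^{\sigma_n},\hf^{\sigma_n})\mathbb{1}_{\{f^{\sigma_n}\le R\}}\le (1+\alpha R)\,Q^{\alpha,+}(f^{\sigma_n},\hf^{\sigma_n}),
\end{equation*}
and the right-hand side is weakly relatively compact in $L^1([0,T]\times\R^d\times B_R^v)$ by Lemma~\ref{lemma:Q}. Domination by a weakly relatively compact sequence transfers the equi-integrability and tightness (conditions (2) and (3) of Theorem~\ref{thm:DP}) to $H^n\mathbb{1}_{\{|h^n|\le R\}}$, yielding hypothesis (2). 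All assumptions of Theorem~\ref{thm:conv:strong} are thereby satisfied and $f^{\sigma_n}\to f$ in $C([0,T];L^1(\Do))$.

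The main obstacle here is really hypothesis (3): the pointwise a.e.\ convergence of the gain term. This is not proved by soft compactness arguments but rests on the smoothing/regularisation of $Q^+$ combined with velocity averaging, which was the content of Theorem~\ref{thm:conv:measure} in the previous subsection. The remaining steps (1), (2), (4) are essentially bookkeeping: they plug in the weak compactness lemmas of Section~\ref{sec:weak-cmpt} and the $L^1$-convergence of $L(\hf^\sigma)$ from Section~\ref{sec:weak}. Once strong convergence in $C([0,T];L^1)$ is obtained, the identification $\hf=f$ claimed implicitly in the statement follows from the fact that convolution with $\kappa^\sigma$ tends to the identity and $f^\sigma$ is already strongly convergent, combined with Lemma~\ref{ave:conv:sigma}.
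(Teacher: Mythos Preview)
Your proposal is correct and follows essentially the same route as the paper's proof: write \eqref{FBE} in the form of Theorem~\ref{thm:conv:strong} with $H^n=Q^+(f^{\sigma_n},\hf^{\sigma_n})$ and $l^n=L(\hf^{\sigma_n})$, then verify hypotheses (1)--(4) via Lemma~\ref{lemma:f}, Lemma~\ref{lemma:Q} combined with the domination $Q^+\mathbb{1}_{\{f^\sigma\le R\}}\lesssim_R Q^{\alpha,+}$, Theorem~\ref{thm:conv:measure} (passing to a subsequence for a.e.\ convergence), and Lemma~\ref{ave:conv:L}, respectively. Your use of $\alpha\in(0,1)$ in step~(2) actually matches the hypothesis of Lemma~\ref{lemma:Q} more precisely than the paper's choice $\alpha=1$, but this is a cosmetic difference.
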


\begin{proof}

We verify the assumptions in Theorem~\ref{thm:conv:strong}.
\begin{enumerate}
    \item In Lemma~\ref{lemma:f}, we showed that $\{f^\sigma_t\}$ is relatively weakly compact in $L^1(\Do)$.
    \item In Lemma~\ref{lemma:Q}, we showed the relatively weakly compactness of $\frac{Q^{+}(f^\sigma,\hf^\sigma)}{1+f^\sigma}$ in $L^1((0,T)\times\R^d\times B^v_R)$. Notice that $0\le (1+f^\sigma)\mathbb{1}_{\{|f^\sigma|\le R\}
    }\le 1+R$ and $f^\sigma\to f$ in $L^1([0,T]\times\Do)$. Thus, $Q^{+}(f^\sigma,\hf^\sigma)\mathbb{1}_{\{|f^\sigma|\le R\}}$ is relatively weakly compact in $L^1((0,T)\times\R^d\times  B^v_R)$.
    \item In Theorem~\ref{thm:conv:measure}, we showed that $(f^\sigma,\hf^\sigma)\to Q(f,\hf)$ in measure, then there exists a subsequence (still denoted by $f^\sigma$) such that
    \begin{equation*}
    Q^{+}(f^\sigma,\hf^\sigma)\to Q(f,\hf)\quad\text{a.e. on}\quad (0,T)\times B^{x,v}_R.    
    \end{equation*}
    \item In Lemma~\ref{ave:conv:L}, we showed that $ L(\hf^\sigma)\to L(\hf)$
    in $L^1((0,T)\times  B^{x,v}_R)$.
\end{enumerate}
We apply Theorem~\ref{thm:conv:strong} to derive that $f^\sigma$ converges strongly in $C([0,T];L^1(\Do))$. The uniqueness  of weak and strong limits of $f^\sigma$ ensures that
\begin{equation*}
    f^\sigma\to f\quad\text{in} \quad C([0,T];L^1(\Do)).
\end{equation*}
\end{proof}

We recall that $\hf^\sigma\rightharpoonup \hf\in C([0,T];L^1(\Do))$ in $L^1([0,T]\times\Do)$. The strong convergence of $f^\sigma$ ensures that $f=\hf$. Indeed, we have
\begin{equation*}
    \|f-\hf^\sigma\|_{L^1(\Do)}\le \|f-f*_x\kappa^\sigma\|_{L^1(\Do)}+\|f-f^\sigma\|_{L^1(\Do)}\to0
\end{equation*}
as $\sigma\to0$. Then the uniqueness of weak and strong limits of $\hf^\sigma$ implies that $f=\hf$.

We show that $f$ is indeed a renormalised solution of the inhomogeneous Boltzmann equation  \eqref{IBE}.
\begin{theorem}
\label{thm:weak-sol:f-hf}
We have that $f\in C([0,T]\times L^1(\Do))$ is a renormalised solution of 
\begin{equation*}
\label{IBE:h}
    \left\{
    \begin{aligned}
     &(\d_t+v\cdot \nabla_x)f=Q(f,f)\\
     &f|_{t=0}=f_0.
    \end{aligned}
    \right.
\end{equation*}
\end{theorem}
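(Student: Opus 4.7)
The proof plan is to start from the renormalised fuzzy Boltzmann equation \eqref{FBE:renorm} for fixed $\alpha \in (0,1)$,
\[
  (\partial_t + v\cdot\nabla_x)\,g^{\sigma,\alpha} \;=\; \frac{Q(f^\sigma,\hat f^\sigma)}{1+\alpha f^\sigma},
\]
and to pass to the distributional limit $\sigma \to 0$, exploiting the identity $\hat f = f$ established immediately before the statement. The targeted limiting identity is
\[
  (\partial_t + v\cdot\nabla_x)\,\alpha^{-1}\log(1+\alpha f) \;=\; \frac{Q(f,f)}{1+\alpha f},
\]
which, holding for every $\alpha \in (0,1)$, is exactly the renormalised formulation of the inhomogeneous Boltzmann equation \eqref{IBE} in the sense of \textcite{DL89b}. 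The initial datum is inherited from $f^\sigma_{t=0} = f_0$ through the strong convergence $f^\sigma \to f$ in $C([0,T];L^1(\Do))$ provided by Corollary~\ref{strong:conv}.

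For the left-hand side, since $z\mapsto \alpha^{-1}\log(1+\alpha z)$ is $1$-Lipschitz, the $C([0,T];L^1)$ convergence of $f^\sigma$ propagates to $g^{\sigma,\alpha}\to \alpha^{-1}\log(1+\alpha f)$ in $C([0,T];L^1(\Do))$, so the transport term passes to the distributional limit directly. For the loss term
\[
  \frac{Q^-(f^\sigma,\hat f^\sigma)}{1+\alpha f^\sigma} \;=\; \frac{f^\sigma}{1+\alpha f^\sigma}\,L(\hat f^\sigma),
\]
I would extract a subsequence along which $f^\sigma \to f$ almost everywhere; then $f^\sigma/(1+\alpha f^\sigma)$ is uniformly bounded by $\alpha^{-1}$ and converges a.e.\ to $f/(1+\alpha f)$, while Lemma~\ref{ave:conv:L} together with $\hat f = f$ yields $L(\hat f^\sigma)\to L(f)$ in $L^1([0,T]\times\R^d\times B^v_R)$. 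Splitting the difference in the standard way and invoking dominated convergence on each piece produces $L^1_{\mathrm{loc}}$ convergence of the loss term to $Q^-(f,f)/(1+\alpha f)$.

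For the gain term, I would combine three inputs: (i) Theorem~\ref{thm:conv:measure} gives $Q^+(f^\sigma,\hat f^\sigma)\to Q^+(f,f)$ in measure on $[0,T]\times\R^d\times B^v_R$; (ii) the a.e.\ convergence of $(1+\alpha f^\sigma)^{-1}$ together with the uniform bound by $1$ makes the corresponding quotients converge in measure to $Q^+(f,f)/(1+\alpha f)$; (iii) the equi-integrability of $\{Q^+(f^\sigma,\hat f^\sigma)/(1+\alpha f^\sigma)\}$ is inherited from the weak $L^1$ compactness established in Lemma~\ref{lemma:Q}. Fed into Vitali's convergence theorem, these upgrade convergence in measure to convergence in $L^1([0,T]\times\R^d\times B^v_R)$, so every distributional pairing with $\varphi \in C^\infty_c([0,T)\times\Do)$ passes to the limit.

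The hardest step is the gain-term convergence, since it rests on Theorem~\ref{thm:conv:measure}, whose proof leverages the smoothing effect of $Q^+$ from \cite{Lio94}; the other ingredients are essentially dominated or Vitali convergence on top of the compactness results of Section~\ref{sec:pre}. Once the renormalised identity is obtained for every $\alpha \in (0,1)$, the equivalence between weak and renormalised formulations noted in the remark after Definition~\ref{def:sols} (which carries over verbatim to the classical Boltzmann equation under Assumption~\ref{CK}) concludes that $f$ is a renormalised solution in the desired sense with the prescribed initial value.
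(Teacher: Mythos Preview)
Your proof is correct and follows the same route as the paper: pass to the limit in the renormalised fuzzy equation, handling the transport term via the Lipschitz property of $z\mapsto\alpha^{-1}\log(1+\alpha z)$ and the strong convergence $f^\sigma\to f$, and the loss term via the product of the bounded a.e.-convergent factor $f^\sigma/(1+\alpha f^\sigma)$ with the $L^1_{\mathrm{loc}}$-convergent factor $L(\hat f^\sigma)$. The only differences are cosmetic: for the gain term the paper points to the comparison inequality \eqref{relation:f':f} whereas you make the argument explicit via Theorem~\ref{thm:conv:measure} and Vitali (same ingredients), and your closing appeal to the weak/renormalised equivalence is unnecessary since the renormalised identity for $\alpha\in(0,1)$ is already the target.
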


\begin{proof}
We will pass to the limit in the following weak formulation of \eqref{FBE:renorm}
\begin{equation*}
    \begin{aligned}
        &\int_{\Do} \log(1+f_0)\varphi(0)\dd x\dd v+\int_0^T\int_{\Do} \log(1+f^\sigma) \d_t\varphi\dd x\dd v\\
        &+\int_0^T\int_{\Do}v\cdot\nabla_x\varphi \log(1+f^\sigma)+\int_0^T\int_{\Do} \varphi \frac{Q(f^ \sigma,f^\sigma)}{1+f^\sigma}\dd x\dd v=0
    \end{aligned}
\end{equation*}
for all $\varphi\in C^\infty_c([0,T)\times\Do)$.

The convergence $   f^\sigma\to f$ in $C([0,T];L^1(\Do))$ in Corollary~\ref{strong:conv} implies that $\log(1+f^\sigma)\to\log(1+f)$ in $C([0,T];L^1(\Do))$.

The boundedness and pointwise convergence of $\frac{f^\sigma}{1+f^\sigma}\to\frac{f}{1+f}$ combines with the convergence  $L(f^\sigma)\to L(f)$ in $L^1([0,T]\times\R^d\times B^v_R)$, imply 
that 
\begin{equation*}
\frac{Q^-(f^ \sigma,f^\sigma)}{1+f^\sigma}\varphi\to\frac{Q^-(f,f)}{1+f}\varphi \quad\text{in} \quad L^1([0,T]\times \Do).
\end{equation*}

The convergence of the gain term is a direct consequence of the convergence of the loss term
 and the comparison inequality \eqref{relation:f':f}.

\end{proof}

\appendix

\section{Solvability results}
\label{app-sec:sol}
We complete the solvability results in Theorem~\ref{thm:existence} for the fuzzy Boltzmann equation \eqref{pre:FBE} with a fixed $\sigma\in(0,1)$
\begin{equation}
\label{app:FBE}
\left\{
\begin{aligned}
&\d_t f+v\cdot \nabla_x f=Q_{\sf fuz}(f,f)\\
&Q_{\sf fuz}(f,f)=\int_{\Do\times S^{d-1}}
\kappa B(f'f'_*-f_*)\dd x_*\dd v_*\dd \omega.
\end{aligned}
\right.
\end{equation}
For simplification, in the appendix: We drop the parameter $\sigma\in(0,1)$; Since we will only 
Since we only consider the fuzzy collision operator, we take the following abbreviation
\begin{align*}
 f_*=f(x_*,v_*),\quad f'=f(x,v')\quad\text{and}\quad   
 f'_*=f(x_*,v_*').
\end{align*}

Based on \cite{EH25} and the arguments for homogeneous Boltzmann equations, we have the following theorem.

\begin{theorem}
\label{app:thm:existence}
Let the collision kernel $B$ satisfy Assumption~\ref{CK} with $\mu\in(0,1]$. If $f_0\in L^1_{2,2}(\Do;\R_+)$ and  $|\cH(f_0)|<+\infty$, 
    then there exists a global-in-time weak solution $f\in C([0,T];L^1(\Do;\R_+))\cap L^\infty([0,T];L^1_{2,2}(\Do;\R_+))$ of the fuzzy Boltzmann equation \eqref{app:FBE} for any $T>0$ such that the following mass, momentum and energy conservation laws hold
\begin{equation}
\label{app:bdd:L122}
    \begin{aligned}
&\|(1,v,|v|^2)f_t\|_{L^1(\Do)}=\|(1,v,|v|^2)f_0\|_{L^1(\Do)}\quad\forall t\in[0,T],
        \end{aligned}
            \end{equation}
            and the following entropy inequality holds 
\begin{equation}
\label{app:ineq:cH}
    \cH(f_t)-\cH(f_0)\le-\int_0^t \cD(f_s)\dd s\quad\forall t\in[0,T],
\end{equation}
where $\cD(f)$ is defined as in \eqref{def:dissipation}. 

Moreover, if the collision kernel  $B(v-v_*,\omega)=b(\theta)|v-v_*|^\mu$ with $b(\theta)$ bounded and we take the domain $(x,v)\in\T^3\times\R^3$, the corresponding energy-conserved solution of \eqref{app:FBE} is unique.

\end{theorem}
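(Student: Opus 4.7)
The plan is to argue the three assertions separately: existence together with the entropy inequality, the upgrade of the energy inequality to the energy identity, and uniqueness on the torus for the specific product kernel. For the first point, I would adapt the construction of \cite{EH25}, where the same result was proven for data in $L^1_{2,2+\mu}$. The stronger $\mu$-moment was used there only to justify testing the equation against $|v|^2$ and obtain strict energy conservation; every other step (truncating by $B_N=B\wedge N$, constructing approximate solutions, extracting a weak limit via the compactness machinery of Sections~\ref{sec:pre}--\ref{sec:strong} with $\sigma$ held fixed, and passing to the limit in the entropy/dissipation inequality by lower semicontinuity of $\cH$ and $\cD$) transfers verbatim to $L^1_{2,2}$ data. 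The spatial moment bound $\|f_t\|_{L^1_{2,0}}\lesssim_T\|f_0\|_{L^1_{2,2}}$ follows from testing against $\langle x\rangle^2$: the fuzzy collision operator leaves the $x$-marginal of $f$ invariant (only velocities are exchanged), so only the transport contribution $\int v\cdot\nabla_x\langle x\rangle^2\,f\dd x\dd v$ remains, which is absorbed using the propagated second $v$-moment and $2|v||x|\le |v|^2+|x|^2$.

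To upgrade the energy inequality to the energy identity, I would follow Lu's strategy for the homogeneous Boltzmann equation. Introduce a smooth radial cut-off $\chi_R\in C^\infty_c(\R^d;[0,1])$ equal to $1$ on $B^v_R$ and supported in $B^v_{2R}$, and test the equation against $\chi_R(v)|v|^2$. This test function is $x$-independent, so the transport term integrates to zero, and after the usual symmetrisation of the collision integral one obtains
\[
\frac{d}{dt}\int_{\Do}\chi_R|v|^2\,f\dd x\dd v \;=\; \frac14\int_{\G}\kappa^\sigma B\,(f'f'_*-ff_*)\,\Delta_R\dd\sigma,
\]
where $\Delta_R=(\chi_R|v|^2)'+(\chi_R|v|^2)'_*-\chi_R|v|^2-(\chi_R|v|^2)_*$. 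Microscopic energy conservation gives $\Delta_R\equiv 0$ on $\{|v|,|v_*|\le R\}$, and $|\Delta_R|\lesssim \langle v\rangle^2+\langle v_*\rangle^2$ on the complement. Applying Cauchy--Schwarz in the form
\[
|f'f'_*-ff_*|\le \bigl((f'f'_*-ff_*)\log\tfrac{f'f'_*}{ff_*}\bigr)^{1/2}(f'f'_*+ff_*)^{1/2},
\]
and combining the integrated entropy dissipation $\int_0^T\cD(f_s)\dd s<\infty$ (see \eqref{bdd:H-D}) with the propagated $L^1_{0,2}$ moment on the second factor (a pre-/post-collisional change of variables treats the gain piece), the right-hand side tends to zero as $R\to\infty$ uniformly in $t$. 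This produces $\int|v|^2 f_t\ge \int|v|^2 f_0$, which combined with the energy inequality of \eqref{app:ineq:cH} gives equality. The chief obstacle is the high-velocity tail of the gain term $f'f'_*$, handled by the change of variables $(v,v_*)\leftrightarrow(v',v'_*)$, under which the factor $\kappa^\sigma(x-x_*)$ is unaffected because collisions are local in $x$ for each pair $(x,x_*)$.

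For uniqueness on $\T^3\times\R^3$ with $B(v,\omega)=b(\theta)|v|^\mu$, I would adapt Mischler--Wennberg \cite{MW99}. Given two energy-conserving solutions $f,g$ with the same initial data, set $h=f-g$; after bilinear symmetrisation, $h$ satisfies a linearised fuzzy Boltzmann equation with right-hand side depending on $f+g$ and $h$. Testing against $\sgn(h)\langle v\rangle^2$ and integrating over $\T^3\times\R^3$ (the compactness of $\T^3$ removes any need for an $x$-weight), the angular averaging against $b(\theta)$ yields the Mischler--Wennberg cancellation between the gain and the loss and leaves the estimate
\[
\frac{d}{dt}\|h_t\|_{L^1_{0,2}}\;\lesssim\; \|\kappa^\sigma\|_{L^\infty}\bigl(\|f_t\|_{L^1_{0,2+\mu}}+\|g_t\|_{L^1_{0,2+\mu}}\bigr)\|h_t\|_{L^1_{0,2}},
\]
so Gronwall gives $h\equiv 0$. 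The main obstacle is to propagate the $(2+\mu)$-moment in $v$ from data only in $L^1_{2,2}$; for $\mu>0$ and bounded angular kernel $b$, a Povzner inequality produces a strictly negative defect in the moment-of-order-$(2+\mu)$ balance, which combined with the $\kappa^\sigma$-factor yields a uniform-in-time bound for $\|f_t\|_{L^1_{0,2+\mu}}$ on any finite time interval and closes the Gronwall argument.
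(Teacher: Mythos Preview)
Your outline for existence and the entropy inequality matches the paper: invoke \cite{EH25} with the observation that the extra $\mu$-moment there was only needed for the energy identity, so $L^1_{2,2}$ data give existence plus an energy \emph{inequality}.

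The gap is in the energy-conservation step. Your cut-off argument requires controlling
\[
\int_0^T\!\!\int_{\G}\kappa^\sigma B\,|f'f'_*-ff_*|\,|\Delta_R|\,\dd\sigma\,\dd t
\]
with $|\Delta_R|\lesssim(\langle v\rangle^2+\langle v_*\rangle^2)\mathbb 1_{\{|v|>R\text{ or }|v_*|>R\}}$. Any Cauchy--Schwarz split either leaves a weighted dissipation $\int \kappa B\,|\Delta_R|\,(f'f'_*-ff_*)\log\tfrac{f'f'_*}{ff_*}$, which you do not control (the entropy bound \eqref{bdd:H-D} has no $\langle v\rangle^2$-weight), or leaves a factor $\int\kappa B\,(f'f'_*+ff_*)\,|\Delta_R|^p$ with $p\ge 1$, which after the change of variables needs at least the $(2+\mu)$-moment of $f$---precisely what you do not have. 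The paper instead follows Lu and tests with the \emph{concave} approximation $\phi_\varepsilon(v)=\varepsilon^{-1}\log(1+\varepsilon|v|^2)$. Using $|v'|^2+|v'_*|^2=|v|^2+|v_*|^2$ one rewrites the symmetrised collision integrand as $K_\varepsilon-J_\varepsilon$ with $K_\varepsilon,J_\varepsilon\ge 0$; the point is that $0\le J_\varepsilon\lesssim\langle v\rangle^{1+\mu}\langle v_*\rangle^{1+\mu}$, so $J_\varepsilon\to 0$ by dominated convergence with only second moments, while $K_\varepsilon\ge 0$ furnishes the sign $\int|v|^2f_t\ge\int|v|^2f_0$. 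The concave test function trades an absolute-value bound (which needs higher moments) for a sign.

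There is a second gap in the uniqueness sketch. A Povzner inequality does create higher moments for $t>0$, but it does \emph{not} give a uniform-in-time bound on $M_{2+\mu}(f_t)$ down to $t=0$ when $f_0\notin L^1_{0,2+\mu}$: in general $M_{2+\mu}(f_t)\to+\infty$ as $t\to 0^+$. Hence the coefficient $\|f_t\|_{L^1_{0,2+\mu}}+\|g_t\|_{L^1_{0,2+\mu}}$ in your differential inequality is not integrable at $t=0$, and Gronwall does not close. The paper (following \cite{MW99}) first shows $M_{2+\mu}(f_t)\le\gamma(t)/t$ with $\gamma(t)\to 0$, then runs a Nagumo-type argument: from $X_2(0)=0$ and $\dot X_2\lesssim X_2+\tfrac{\gamma(t)}{t}X_1$ one bootstraps $X_2(t)\le Ct^2$, whence $X_2(t)\le C\int_0^t \tau^{-1}X_2(\tau)\dd\tau$ forces $X_2\equiv 0$ on a short interval. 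You need this refinement, not a plain Gronwall.
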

\begin{proof}
In \cite{EH25}, we have proved the above results with the initial value $f_0\in L^1_{2,2+\mu}(\Do)$. The proof can be applied straightforwardly to the initial values $f_0\in L^1_{2,2}(\Do)$, where we have the weak solution $f\in C([0,T];L^1(\Do;\R_+))\cap L^\infty([0,T];L^1_{2,2}(\Do;\R_+))$ and the following energy inequality holds
\begin{equation}
\label{app:en-in}
    \int_{\Do}f_t|v|^2\dd x\dd v\le \int_{\Do}f_0 |v|^2\dd x\dd v\quad\forall t\in[0,T].
\end{equation}

Compared to Theorem~\ref{app:thm:existence}, we still need to show the energy conservation laws and the uniqueness of the energy-conserved solutions on $\T^3\times\R^3$.

\begin{itemize}
    \item {\it Energy conservation laws.} Since the energy of the weak solution is not larger than the initial energy as in \eqref{app:en-in}, to show the energy conservation law, we only need to show 
    \begin{equation}
    \label{app:en-ge}
    \int_{\Do}f_t|v|^2\dd x\dd v\ge \int_{\Do}f_0 |v|^2\dd x\dd v,\quad\forall t\in[0,T].
\end{equation}
We follow the arguments for the homogeneous Boltzmann equation \eqref{HBE} in \cite{Lu99} to show \eqref{app:en-ge}. 

We approximate $|v|^2$ by $\phi_\varepsilon(v)=\varepsilon^{-1}\log(1+\varepsilon|v|^2)$. By using $0\le \log(1+y)\le \sqrt y$, we have $\phi_\varepsilon(v)\le \varepsilon^{-\frac12} \langle v\rangle$. One  can choose $\phi_\varepsilon(v)$ as a test function in the weak formulation and first verify that the following equality is well-defined
\begin{equation}
\label{weak-form:phi-epsi}
\begin{aligned}
  \int_{\Do} f_t \phi_\varepsilon\dd x \dd v=&\int_{\Do}f_0\phi_\varepsilon\dd x \dd v+\int_0^t\int_{\Do}Q_{\sf fuz}(f,f)\phi_\varepsilon \dd x\dd v\dd\tau.    
\end{aligned}
\end{equation}
Since $f\in L^\infty([0,T];L^1_{2,2}(\Do))$, we only need to show the integrability of   $Q^{\pm}_{\sf fuz}(f,f)\phi_\varepsilon$. Concerning the loss term, we have
\begin{align*}
    &\int_{\Do}Q^-_{\sf fuz}(f,f)\phi_\varepsilon\dd x\dd v\\
    =&{}\int_{\G}ff_*\kappa B(v-v_*,\omega)\phi_\varepsilon(v)\dd\sigma\\
\lesssim&{} \varepsilon^{-\frac12}\int_{\R^{4d}}ff_*\langle v_*\rangle^{\mu}\langle v\rangle^{1+\mu}\dd x_*\dd x\dd v_*\dd v\\
\lesssim&{} \varepsilon^{-\frac12}\|f\|_{L^1_{0,2}(\Do)}^2<+\infty.
\end{align*}
The integrability of the gain term $Q^+_{\sf fuz}(f,f)\phi_\varepsilon$ can be obtained by applying the comparison inequality \eqref{relation:f':f}. 

Notice that 
by changing of variables, the collision term can be written as
\begin{align*}
    &\int_{\Do}Q_{\sf fuz}(f,f)\phi_\varepsilon \dd x\dd v\\
    =&{}\frac12\int_{\R^{4d}\times S^{d-1}}\kappa Bff_*\big(\phi_\varepsilon'+(\phi_\varepsilon)_*'-\phi_\varepsilon-(\phi_\varepsilon)_*\big)\dd\sigma.
\end{align*}
By using of the energy conservation law $|v'|^2+|v_*'|^2=|v|^2+|v_*|^2$, the alternating sum of $\phi$ can be written as 
\begin{align*}
&\phi_\varepsilon(v')+\phi_\varepsilon(v_*')-\phi_\varepsilon(v)-\phi_\varepsilon(v_*)\\
=&{}\varepsilon^{-1}\log\frac{(1+\varepsilon|v'|^2)(1+\varepsilon|v_*'|^2)}{(1+\varepsilon|v|^2)(1+\varepsilon|v_*|^2)}\\
=&{}\varepsilon^{-1}\log\Big(1+\frac{\varepsilon^2|v'|^2|v_*'|^2}{1+\varepsilon(|v|^2+|v_*|^2)}\Big)\\
&-\varepsilon^{-1}\log\Big(1+\frac{\varepsilon^2|v|^2|v_*|^2}{1+\varepsilon(|v|^2+|v_*|^2)}\Big).
\end{align*}
Accordingly, we define
\begin{align*}
    &K_\varepsilon(v,v_*)=\frac{1}{2\varepsilon}\int_{S^{d-1}}\log\big(1+\frac{\varepsilon^2|v'|^2|v_*'|^2}{1+\varepsilon(|v|^2+|v_*|^2)}\Big) B(v-v_*,\omega)\dd\omega,\\
    &J_\varepsilon(v,v_*)=\frac{1}{2\varepsilon}\log\Big(1+\frac{\varepsilon^2|v|^2|v_*|^2}{1+\varepsilon(|v|^2+|v_*|^2)}\Big)\int_{S^{d-1}} B(v-v_*,\omega)\dd\omega.
\end{align*}
Then, the weak collision term can be written as
\begin{align*}
&\int_{\Do}Q_{\sf fuz}(f,f)\phi_\varepsilon \dd x\dd v \\
=&{}\int_{\R^{4d}}\kappa ff_*K_\varepsilon\dd x\dd x_*\dd v\dd v_*-\int_{\R^{4d}}\kappa ff_*J_\varepsilon\dd x\dd x_*\dd v\dd v_*.
\end{align*}

We pass to the limit by letting $\varepsilon\to0$ in the weak formulation \eqref{weak-form:phi-epsi} i.e.
\begin{equation*}
\begin{aligned}
  \int_{\Do} f_t \phi_\varepsilon+\int_0^t\int_{\R^{4d}}\kappa f f_* J_\varepsilon=\int_{\Do}f_0\phi_\varepsilon+\int_0^t\int_{\R^{4d}}\kappa f f_* K_\varepsilon.   
\end{aligned}
\end{equation*}

Since $0\le \phi_\varepsilon(v)\le |v|^2$, we use dominate convergence theorem
\begin{equation*}
    \lim_{\varepsilon\to0}\int_{\Do} f_t \phi_\varepsilon\dd x \dd v=\int_{\Do} f_t |v|^2\dd x \dd v.
\end{equation*}
By using of $0\le \log(1+y)\le \sqrt y$, $J_\varepsilon$ is bounded by
\begin{align*}
  0&\le J_\varepsilon(v,v_*)\\
  &\lesssim \varepsilon^{-1}\int_{S^{d-1}}\log\big(1+\frac{\varepsilon^2|v|^2|v_*|^2}{1+\varepsilon(|v|^2+|v_*|^2)}\big)\langle v-v_*\rangle^{\mu}\dd\omega\\
  &\lesssim \frac{|v||v_*|}{\sqrt{1+\varepsilon(|v|^2+|v_*|^2)}}\langle v-v_*\rangle^{\mu}\\
  &\lesssim \langle v\rangle^{1+\mu}\langle v_*\rangle^{1+\mu}.
\end{align*}
 Since $0\le J_\varepsilon\le \langle v\rangle^2\langle v_*\rangle^2$ and $\lim_{\varepsilon\to0}J_\varepsilon=0$, the dominate convergence theorem implies that
\begin{equation*}
\begin{aligned} \int_0^t\int_{\R^{4d}}\kappa f f_* J_\varepsilon\to0\quad\text{as }\varepsilon\to0.
\end{aligned}
\end{equation*}
Hence, we have
\begin{equation*}
\begin{aligned}
  &\int_{\Do} f_t |v|^2\le \int_{\Do}f_0|v|^2+\liminf_{\varepsilon\to0}\int_0^t\int_{\R^{4d}}\kappa f f_*K_\varepsilon.  
\end{aligned}
\end{equation*}
The positivity of $K_\varepsilon$ ensures  \eqref{app:en-ge} that the energy is not smaller than the initial energy. 

Combining with the energy inequality \eqref{app:en-in}, we have the energy conservation law
\begin{equation*}
    \int_{\Do}f_t|v|^2\dd x\dd v= \int_{\Do}f_0|v|^2\dd x\dd v\quad\forall t\in[0,T].
\end{equation*}

    \item {\it Uniqueness of energy-conserved solutions on $\T^3\times\R^3$.} We follow the uniqueness argument for the energy-conserved solutions to the homogeneous Boltzmann equation \eqref{HBE} in \cite{MW99}. On torus $\T^3$, the spatial kernel $\kappa$ has positive upper and lower bounds
    \begin{equation}
    \label{app:kappa:bdd}
        C_\kappa^{-1}\le \kappa(z)\le C_\kappa\quad\forall z\in\T^3
    \end{equation}
   for some constant $C_\kappa>0$.    
    We take the collision kernel the form  $B(v-v_*,\omega)=b(\theta)|v-v_*|^\mu$, $\mu\in(0,1]$.

   The following 
    shape Povzner's inequality has been shown in \cite{MW99} will keep being used in the proof.
     \begin{lemma}[\cite{MW99}]
        \label{app:lem:dep}
            For a function $\Psi=\Psi(|v|^2)\in\R$, we define
            \begin{equation*}
                K(v,v_*)=\int_{S^{d-1}}b(\theta)(\Psi'+\Psi'_*-\Psi-\Psi_*)\dd\omega.
            \end{equation*}
            Then we have
            \begin{equation*}
                K(v,v_*)=G(v,v_*)-H(v,v_*),
            \end{equation*}
            where let $\bar b(\theta)=b(\theta)\sin\theta\cos\theta$ and $H$ is given by
            \begin{align*}
            &H(v,v_*)=8\pi\int_0^{{\pi}/{2}}\big(\bar b(\theta)+\bar b({\pi}/{2}-\theta)\big)\times\\
            &\quad \big(\Psi(|v|^2)\cos^2\theta+\Psi(|v_*|^2)\sin^2\theta-\Psi(|v|^2\cos^2\theta+|v_*|^2\sin^2\theta)\big)\dd\theta.
            \end{align*}
           \begin{enumerate}
               \item Let $\Psi$ be a positive convex function that can be written as $\Psi(x)=x\Phi(x)$, where $\Phi$ is  concave, increasing to infinity, and $\Phi'(x)\lesssim \frac{1}{1+x}$. Then $G$ is bounded by \begin{equation*}
                   |G(v,v_*)|\lesssim |v||v_*|.
               \end{equation*}
               Let $\chi_1(v,v_*)=1-\mathbb{1}_{\{{|v|}/{2}\le |v_*|\le 2|v|\}}$. For any $\varepsilon>0$,
               we have
            \begin{equation*}
                   H(v,v_*)\gtrsim  (|v|^{2-\varepsilon}+|v_*|^{2-\varepsilon})\chi_1(v,v_*).
               \end{equation*}

               \item 
               Let $\Psi(x)=x^{1+r}$ for some $r>0$. 
               Then $G$ is bounded by
               \begin{equation*}
                   |G(v,v_*)|\lesssim r(|v||v_*|)^{1+r},
               \end{equation*}
               and 
            \begin{equation*}
                   H(v,v_*)\gtrsim  r(|v|^{2(1+r)}+|v_*|^{2(1+r)})\chi_1(v,v_*).
               \end{equation*}
              
           \end{enumerate}
              
        \end{lemma}

     To show the uniqueness, in principle, we are estimating the $2+\mu$-moment of the fuzzy Boltzmann equation \eqref{app:FBE}. Let $f$ be an energy-conserved solution of \eqref{app:FBE}. We follow \cite{MW99} to show the uniqueness in four steps: In Step $1$, we improve the integrability that for some convex function $\Psi$, we have the integrability of  $\Psi(|v|^2)f$; In Step $2$ and $3$, we discuss the integrability of $\langle v\rangle^{2+\mu} f$; We show the uniqueness in Step $4$.

    For $s\ge0$, we define 
     \begin{align*}
         M_s(f)=\int_{\T^3\times\R^3} |v|^sf\dd x\dd v.
     \end{align*}
   
    \begin{itemize}
        \item {\bf Step $1$: Integrability of $\Psi(|v|^2)f$.}
For $|v|^2f_0\in L^1(\T^3\times\R^3)$, there exists a convex function
$\Psi(x)$ satisfying the assumptions Lemma~\ref{app:lem:dep}-(1) such that $\Psi(|v|^2)f_0\in L^1(\T^3\times\R^3)$.  The construction of $\Psi$ can be found in \cite[Appendix]{MW99}.

        We will show that for any $t\in[0,T]$, we have the uniform bounds
        \begin{equation}
            \label{app:Psi:bdd}
            C_1\le \int_{\T^3\times\R^3}\Psi(|v|^2)f_t(x,v)\dd x \dd v\le C_2\quad\forall t\in[0,T],
        \end{equation}
        and
         \begin{equation}
         \label{app:L1:2+mu}  \int_0^tM_{2+\mu/2}(f_\tau)\dd\tau\le C(1+t).
        \end{equation}

The bound \eqref{app:Psi:bdd} holds for $t=0$ by definition of $\Psi$. We will show that the bounds \eqref{app:Psi:bdd} hold for all $t\in[0,T]$. We first approximate $\Psi$ by $\Psi_n:\R_+\to\R$
        \begin{equation*}
\Psi_n(x)=\Psi(x)\mathbb{1}_{\{0\le x\le n\}}+p_n(x)\mathbb{1}_{\{n\le x\}},
        \end{equation*}
        where $p_n(x)=(x-n)\Psi'(n)+\Psi(n)$ is linear. Notice that $\Psi_n$ is convex and $\supp(\Psi_n-p_n)\subset[0,n]$.  We write $\Psi_n=\Psi_n(|v|^2)$ and $p_n=p_n(|v|^2)$. We  substitute $\Psi_n-p_n$ to the weak formulation of \eqref{app:FBE} to derive
        \begin{equation}
        \label{app:weak:Psi:p}
        \begin{aligned}
            &\int_{\T^3\times \R^3} (f_t-f_0)\Psi_n
            =\int_{\T^3\times \R^3} (f_t-f_0)(\Psi_n-p_n\big)\\
    =&{}\int_0^t\int_{\T^3\times \R^3} Q_{\sf fuz}(f,f)(\Psi_n-p_n)\\
         =&{}\frac12\int_0^t\int_{(\T^3\times \R^3)^2\times S^{2}} \kappa ff_*\big(\Psi_n'+(\Psi_n')_*-\Psi_n-(\Psi_n)_*\big)|v-v_*|^\mu b(\theta),
        \end{aligned}
        \end{equation}
        where the energy conservation law ensures  $\bar\nabla p_n(|v|^2)=0$. 

      We define $K_{\Psi_n}(v,v_*)$ as in Lemma~\ref{app:lem:dep}, and correspondingly, we have the following decomposition
        \begin{equation*}
            K_{\Psi_n}(v,v_*)= G_{\Psi_n}(v,v_*)-H_{\Psi_n}(v,v_*).
        \end{equation*}
       We substitute $K_{\Psi_n}$ to  \eqref{app:weak:Psi:p} 
        to derive
            \begin{equation}
            \label{app:S1:GH-n}
        \begin{aligned}
            &\int_{\T^3\times \R^3} f_t\Psi_n+\frac12\int_0^t\int_{(\T^3\times \R^3)^2} \kappa ff_*|v-v_*|^\mu H_{\Psi_n}\\
=&{}\int_{\T^3\times \R^3} f_0\Psi_n+\frac12\int_0^t\int_{(\T^3\times \R^3)^2} \kappa ff_*|v-v_*|^\mu G_{\Psi_n}.
        \end{aligned}
        \end{equation}

        To show the integrability of $\Psi(|v|^2)f$, we pass to the limit by letting $n\to\infty$ in \eqref{app:S1:GH-n}. 
Notice that $\Psi_n$ and $H_{\Psi_n}$ monotonically converge to $\Psi$ and $H_{\Psi_n}$. The Lemma \ref{app:lem:dep}-(1) ensures that $|G_{\Psi_n}|\lesssim |v||v_*|$ for all $n\in\N_+$. Let $n\to\infty$ in \eqref{app:S1:GH-n}, we have
\begin{equation}
\label{app:S1:GH-n:2}
        \begin{aligned}
&\int_{\T^3\times \R^3} f_t\Psi+\frac12\int_0^t\int_{(\T^3\times \R^3)^2} \kappa ff_*|v-v_*|^\mu H_{\Psi}\\
     \le&{}\int_{\T^3\times \R^3} f_0\Psi+C\int_0^t\int_{(\T^3\times \R^3)^2} ff_*|v-v_*|^\mu |v||v_*|.
        \end{aligned}
        \end{equation}
Lemma \eqref{app:lem:dep}-(1) implies that $ H(v,v_*)\gtrsim  (|v|^{2-\frac{\mu}{2}}+|v_*|^{2-\frac{\mu}{2}})\chi_1(v,v_*)$, and the lower bound of $\kappa$ is given by \eqref{app:kappa:bdd}. Hence, we have
\begin{align*}
   &\kappa ff_* |v-v_*|^\mu H_\Psi\\
   \ge&{} ff_*\big(C'|v|^{2+\frac{\mu}{2}}-C''(|v||v_*|)^{1+\frac{\mu}{4}}\mathbb{1}_{\{{|v|}/{2}\le |v_*|\le 2|v|\}}\big)
\end{align*}
for some constants $C',C''>0$.

We substitute the lower bounds on $H_{\Psi}$ to \eqref{app:S1:GH-n:2} to derive
\begin{equation*}
        \label{app:ineq:Psi}
        \begin{aligned}
            &\int_{\T^3\times \R^3} f_t\Psi+C'\int_0^t\int_{\T^3\times \R^3} f|v|^{2+\frac{\mu}{2}} \\
\le&{}\int_{\T^3\times \R^3} f_0\Psi+C\int_0^t\int_{(\T^3\times \R^3)^2} ff_* \big(\langle v\rangle\langle v_*\rangle\big)^{1+\mu}\\
\lesssim &{} t+1,
        \end{aligned}
        \end{equation*}
where we use $0< \mu\le 1$ and $\||v|^{1+\mu}f\|_{L^1(\T^3\times\R^3)}\le \|\langle v\rangle^2f\|_{L^1(\T^3\times\R^3)}$.

We conclude the bounds \eqref{app:Psi:bdd} and \eqref{app:L1:2+mu}.

        \item {\bf Step $2$: Integrability of $ |v|^{s}f$, $s>2$} We will show for any $t_0>0$ and $s>2$, we have 
        \begin{equation}
            \label{app:s:bdd}
            \sup_{t\ge t_0}M_s(f_t)\le C(t_0,s).
        \end{equation}
       The bound \eqref{app:L1:2+mu} i.e. $\int_0^tM_{2+{\mu}/{2}}(f_\tau)\le C$ implies that 
        there exists at least a sequence $\{t_n\}$ such that $t_n\to0$ and 
         \begin{equation*}
M_{2+{\mu}/{2}}(f_{t_n})\le C\quad\forall n\in\N_+.
        \end{equation*}
        For any fixed $n\in\N_+$, starting from $f_{t_n}$, we show the uniform bounds of $M_{2+{\mu}/{2}}(f_{t})$ for all $t\in[t_n,T]$ by repeating the argument in Step 1. We take $\Psi(x)=x^{1+\frac{\mu}{4}}$ (take approximation $\Psi_n$ as in Step $1$, and pass to the limit) and apply Lemma \ref{app:lem:dep}-$(2)$ to derive
               \begin{equation}
        \label{app:weak:1+mu4}
        \begin{aligned}
            &\int_{\T^3\times \R^3} f_t|v|^{2+\frac{\mu}{2}}+\frac12\int_{t_n}^t\int_{(\T^3\times \R^3)^2} \kappa ff_*|v-v_*|^\mu H_{\Psi}\\
\le&{}\int_{\T^3\times \R^3} f_{t_n}|v|^{2+\frac{\mu}{2}}+C\int_{t_n}^t\int_{(\T^3\times \R^3)^2} ff_*|v-v_*|^\mu \big(|v||v_*|\big)^{1+\frac{\mu}{4}}\\
            \le&{}\int_{\T^3\times \R^3} f_{t_n}|v|^{2+\frac{\mu}{2}}+C\|f\|_{L^1_{0,2}(\T^3\times\R^3)}\int_{0}^t\|f\|_{L^1_{0,2+\frac{\mu}{2}}(\T^3\times\R^3)}\\
            \le&{} M_{2+\mu/2}(f_{t_n})+C(t+1),
        \end{aligned}
        \end{equation}
        where $C>0$ depending only on $\|f\|_{L^1_{0,2}(\Do)}$.
        
Lemma \ref{app:lem:dep}-$(2)$ implies that 
\begin{align*}
   & \kappa ff_*|v-v_*|^\mu H_\Psi\\
   \ge&{} ff_*\big(C'|v|^{2+\frac{3\mu}{2}}-C''(|v||v_*|)^{1+\frac{3\mu}{4}}\mathbb{1}_{\{\frac{|v|}{2}\le |v_*|\le 2|v|\}}\big).
\end{align*} 
We substitute the above lower bound to \eqref{app:weak:1+mu4} to derive
               \begin{equation*}
        \begin{aligned}
            &\int_{\T^3\times \R^3} f_t|v|^{2+\frac{\mu}{2}}+C'\int_{t_n}^t\int_{\T^3\times \R^3} f|v|^{2+\frac{3\mu}{2}}
            \le C(t+1)\quad\forall t\in[t_n,T].
        \end{aligned}
        \end{equation*} 
    By induction, we have, for all $s>2$
              \begin{equation}
                \label{app:bdd:2:1+t}  
                \begin{aligned}
     M_s(f_t)+C'\int_{t_n}^t  M_{s+\mu}(f_\tau)\dd\tau\le C(t+1)\quad t\in[t_n,T].
                \end{aligned}
              \end{equation}

           By choosing $t_n$ arbitrarily small, we conclude with the uniform bound
            \eqref{app:s:bdd}.

        \item {\bf Step $3$: $M_{2+\mu}(f_t)$ near $t=0$.}
        We will show that 
        there exists a function $\gamma(t)\to0$ as $t\to0$ such that
        \begin{equation}
         \label{app:Y:gamma/t}   M_{2+\mu}(f_t)\le {\gamma(t)}/{t}.
        \end{equation}
 We recall \eqref{app:bdd:2:1+t} and choose $s={2+\mu}$. We write $M_s=M_s(f_t)$. Then we have the differential inequality
\begin{equation}
        \label{app:diff:Y:2mu}
            \frac{d}{dt}M_{2+\mu}\le C-C'M_{2+2\mu} \quad \forall t\in[t_0,T],
        \end{equation}
        where $C>0$ depending only on $\|f\|_{L^1_{0,2}(\Do)}$ similar to \eqref{app:weak:1+mu4}.

        To bound of $M_{2+\mu}$, we bound  $M_{2+2\mu}$ from below by $M_{2+\mu}$. Notice that
\begin{equation*}
    M_{2+\mu}(f)=\int_{\T^3\times\R^3}f(x,v)\Psi(|v|^2)\frac{|v|^{2+\mu}}{\Psi(|v|^2)}\dd x \dd v.
\end{equation*}
We recall the convex function $\Psi$ in Step $1$. 
        We define the function 
        \begin{equation*}
            \beta(x)=x\Psi(x).
        \end{equation*}
    The convexity and monotonicity of $\Psi$ ensure that $|v|^{2+\mu}\le \Psi(|v|^2)\Psi^{-1}(|v|^\mu)$, and hence, 
        \begin{equation}
    \label{app:lbdd:Y}\Psi(|v|^2)\beta\Big(\frac{|v|^{2+\mu}}{\Psi(|v|^2)}\Big)=|v|^{2+\mu}\Psi\Big(\frac{|v|^{2+\mu}}{\Psi(|v|^2)}\Big)\le |v|^{2+2\mu}.
\end{equation}
We recall the bound \eqref{app:Psi:bdd} in step 1 that $M_\Psi:=\int_{\T^3\times\R^3}f \Psi(|v|^2)\dd x \dd v$ and $C_1\le M_\Psi\le C_2$.
Notice that $\beta$ is a convex function. By Jensen's inequality, we have  
\begin{align*}
&\beta\big(M_\Psi^{-1}M_{2+\mu}(f)\big)\\
=&{}\beta\big(M_\Psi^{-1}\int_{\T^3\times\R^3}\frac{|v|^{2+\mu}}{\Psi(|v|^2)}f\Psi(|v|^2)\dd x \dd v\big)\\
\le&{} M_\Psi^{-1}\int_{\T^3\times\R^3}\beta\Big(\frac{|v|^{2+\mu}}{\Psi(|v|^2)}\Big)f\Psi(|v|^2)\dd x \dd v\\
\le&{} M_\Psi^{-1} M_{2+\mu}(f),
\end{align*}
where we use the bound \eqref{app:lbdd:Y} for the last inequality.
Since $\beta$ is non-decreasing and $C_1\le  M_\Psi\le C_2$, we have 
\begin{equation*}
C_1\beta(C_2^{-1}M_{2+\mu})\le M_{2+2\mu}.
\end{equation*}

We substitute the above lower bound of $M_{2+2\mu}$ to \eqref{app:diff:Y:2mu} to derive
\begin{equation*}
    \frac{d}{dt} M_{2+\mu}\le C-C'\beta(C_2^{-1}M_{2+\mu})\quad\forall t\in[t_0,T].
\end{equation*}

Since $\frac{d}{dt}M_{2+\mu}\le C$ and $\langle v\rangle^{2+\mu}f_0\notin L^1(\T^3\times\R^3)$, we choose $t_0$ and $\bar t$ small enough such that $C\le \frac{C'}{2}\beta(C_2^{-1}M_{2+\mu})$ for all $t\in [t_0,\bar t]$, and 
\begin{equation}
\label{app:diff-2}
    \frac{d}{dt} M_{2+\mu}\le -\frac{C'}{2}\beta(C_2^{-1}M_{2+\mu})\quad \forall t\in(0,\bar t].
\end{equation}

We define 
\begin{equation*} \Gamma(y)=\int_{y}^{+\infty}\beta(C_2^{-1}{z})^{-1}\dd z.
\end{equation*}
Then \eqref{app:diff-2} implies that
\begin{align*}
    &\frac{d}{dt}\Gamma(M_{2+\mu})=-\beta( C_2^{-1}{M_{2+\mu}})^{-1}\frac{d}{dt}M_{2+\mu}\ge \frac{C'}{2}\quad\forall t\in(0,\bar t].
\end{align*}
Since $M_{2+\mu}(f_0)=+\infty$ and $\Gamma\big(M_{2+\mu}(f_0)\big)=0$, we have 
\begin{equation*}
    \Gamma(Y_{2+\mu})\ge \frac{C'}{2} t\quad\forall t\in(0,\bar t].
\end{equation*}
By definition $\Gamma(y)\Psi(C_2^{-1}y)\le 1$, then we have $\Psi(C_2^{-1}M_{2+\mu})\le \frac{2}{C' t}$. The monotonicity of $\Psi$ ensures that $M_{2+\mu}\le C_2\Psi^{-1}\big(\frac{2}{C' t}\big)$. We conclude with \eqref{app:Y:gamma/t} by choosing $\gamma(t)\defeq C_2t\Psi^{-1}\big(\frac{2}{C't}\big)$.

        \item {\bf Step $4$: Uniqueness.} Let $f$ and $g$ both be energy-conserved solutions to fuzzy Boltzmann equation \eqref{pre:FBE} with the same initial value $f_0$.  The difference equation can be written as
        \begin{equation*}
            \label{app:diff}\left\{
            \begin{aligned}
            &(\d_t+v\cdot\nabla_x) (f-g)=Q_{\sf fuz}(f,f)-Q_{\sf fuz}(g,g) \\
            &f-g|_{t=0}=0,
            \end{aligned}
             \right.
        \end{equation*}
        where 
        \begin{align*}
        &Q_{\sf fuz}(f,f)-Q_{\sf fuz}(g,g)   \\
        &= \frac12\int_{\T^3\times \R^3\times S^2} \kappa B\big((f'-g')(f'_*+g'_*)+(f'+g')(f'_*-g_*')\\
        &\quad-(f-g)(f_*+g_*)-(f+g)(f_*-g_*)\big)\dd x_*\dd v_*\dd\omega.
        \end{align*}
        We test the above equation by $\sgn(f-g)$ and $\langle v\rangle^2\sgn(f-g)$ to derive
        \begin{align*}
            \frac{d}{dt}\int_{\T^3\times\R^3}|f-g|&\le \int_{(\T^3\times\R^3)^2\times S^{2}}\kappa B(f+g)|f_*-g_*|\\
            \frac{d}{dt}\int_{\T^3\times\R^3}\langle v\rangle^2|f-g|&\le \int_{(\T^3\times\R^3)^2\times S^{2}}\kappa B\langle v\rangle^2(f+g)|f_*-g_*|.
        \end{align*}
        We define $X_1=\int_{\T^3\times\R^3}|f-g|$ and $X_2=\int_{\T^3\times\R^3}|f-g|\langle v\rangle^2$. Then the differential inequalities hold
        \begin{equation}
        \label{app:X1:X2}
        \begin{aligned}
            &\frac{d}{dt}X_1(t)\lesssim \|f\|_{L^1_{0,2}(\T^3\times\R^3)}X_2(t),\\
            &\frac{d}{dt}X_2(t)\lesssim X_1(t)\int_{\T^3\times\R^3}
            (f+g)\langle v\rangle^{2+\mu}+\|f\|_{L^1_{0,2}(\T^3\times\R^3)}X_2(t).
        \end{aligned}
            \end{equation}
         Notice that $\|f\|_{L^1_{0,2}(\T^3\times\R^3)}=\|g\|_{L^1_{0,2}(\T^3\times\R^3)}$ and
        \begin{equation*}
            0\le X_1(t) \le X_2(t)
\le 2\|f\|_{L^1_{0,2}(\T^3\times\R^3)}.
\end{equation*}
Since $X_1(0)=X_2(0)=0$, we derived the linear growth bound on $X_1(t)\le Ct$. By using the bound \eqref{app:Y:gamma/t} on small enough time interval $[0,\bar t]$ derived in Step $3$, we have 
$$\int_{\T^3\times\R^3}(f+g) |v|^{2+\mu}\dd x\dd v\le \frac{2\gamma(t)}{t},$$
where $\gamma(t)\to0$ as $t\to0$. We then have the linear growth bound $X_2(t)\le Ct$.

We substitute the linear bounds to \eqref{app:X1:X2} to derive the quadratic bounds $X_1(t),X_2(t)\le Ct^2$. Thus, $X_2(t)$ is continuous and differentiable at $t=0$, and $X_2'(0)=0$. By integrating the differential inequality of $X_2(t)$ from $0$ to $t$, one has
\begin{equation}
\label{app:X2:Z}
    X_2(t)\le C\int_0^t\frac{X_2(\tau)}{\tau}\dd\tau.
\end{equation}

We define $Z(t)\defeq C\int_0^t\frac{X_2(\tau)}{\tau}\dd\tau$. Notice that 
$(Z(t)t^{-1})'=(X_2(t)-Z(t))t^{-2}\le 0$. Combining  with the fact $Z(0)=0$, we have $Z(t)\le 0$. Hence,  \eqref{app:X2:Z} implies that $X_2(t)\le0$ for all $t\in[0,\bar t]$. On the other hand, the positivity of $X_2$ implies that $X_2(t)=0$ for all $t\in[0,\bar t]$. This procedure is known as Nagumo's uniqueness criterion, see for example \cite{Nag1926}. On the time interval $[\bar t,T]$, one can use the moment bounds on $M_{2+\mu}$ derived in Step 2 and the differential inequality \eqref{app:X1:X2} to show $X_2(t)=0$. Hence, we conclude the uniqueness that $f_t=g_t$ for all $t\in[0,T]$.

    \end{itemize}
\end{itemize}

\end{proof}

\printbibliography
\end{document}